\renewcommand{\today}{September 29, 2009}
\newcommand{\ds}{\displaystyle}
\newcommand{\ts}{\textstyle}
\newcommand{\tint}{{\ts \int}}
\renewcommand{\mathbb}{\mathds}
\newcommand{\C}[1]{\mathcal{#1}}
\newcommand{\ov}[1]{\overline{#1}}
\newcommand{\wt}[1]{\widetilde{#1}}
\newcommand{\wh}[1]{\widehat{#1}}
\newcommand{\B}[1]{\mathds{#1}}
\numberwithin{equation}{section}
\newtheorem{thm}{Theorem}[section]
\newtheorem{prop}[thm]{Proposition}
\newtheorem{proposition}[thm]{Proposition}
\newtheorem{lemma}[thm]{Lemma}
\theoremstyle{remark}
\newenvironment{proof}{{\sc Proof.}}{\ $\square$}
\titleformat{\section}[block]{\sc\center}{\thetitle.}{5pt}{}[]
\titlespacing{\section}{0pt}{*4.5}{*3}
\titleformat{\subsection}[runin]{\sc}{\thetitle.}{5pt}{}[.]
\titlespacing{\subsection}{0pt}{*3}{*2}
\titleformat{\subsubsection}[runin]{\it}{\thetitle.}{5pt}{}[.]
\titlespacing{\subsubsection}{0pt}{*2}{*2}
\newcommand{\thmref}[1]{\ref{#1} (page \pageref{#1})}
\newcommand{\myeq}[1]{{\rm (\ref{#1}, page \pageref{#1})}}
\DeclareMathOperator{\XX}{\text{\bf \textsf X}}
\begin{document}
\pagestyle{fancy}
\renewcommand{\sectionmark}[1]{\markboth{#1}{}}
\renewcommand{\subsectionmark}[1]{\markright{#1}}
\fancyhf{}
\fancyhead[LE]{\small\sc\nouppercase{\leftmark}}
\fancyhead[RO]{\small\sc\nouppercase{\rightmark}}
\fancyfoot[LO,RE]{\small\sc Olivier Catoni}
\fancyfoot[RO,LE]{\thepage}
\renewcommand{\footruleskip}{1pt}
\renewcommand{\footrulewidth}{0.4pt}
\newcommand{\mypoint}{\makebox[1ex][r]{.\:\hspace*{1ex}}}
\addtolength{\footskip}{11pt}
\pagestyle{plain}
\renewcommand{\thefootnote}{}
\begin{center}
{\bf High confidence estimates of the mean\\ of heavy-tailed real
random variables}\\[12pt]
{\sc Olivier Catoni}\\[12pt]
{\small \it \today }\\[12pt]
\end{center}
\footnotetext{CNRS --  UMR 8553,
D\'epartement de Math\'ematiques et Applications,
Ecole Normale Sup\'erieure, 45, rue d'Ulm, F75230 Paris 
cedex 05, and INRIA Paris-Rocquencourt -- CLASSIC team.}

{\small
{\sc Abstract :} 
We present new estimators of the mean of a real valued random 
variable, based on PAC-Bayesian iterative truncation.
We analyze the non-asymptotic minimax properties of 
the deviations of estimators for distributions having
either a bounded variance or a bounded kurtosis. 
It turns out that these minimax deviations are of 
the same order as the deviations of the empirical mean 
estimator of a Gaussian distribution. Nevertheless, 
the empirical mean itself performs poorly at 
high confidence levels for the worst distribution 
with a given variance or kurtosis (which turns
out to be heavy tailed). To obtain (nearly) minimax 
deviations in these broad class of distributions, 
it is necessary to use some more robust estimator, 
and we describe an iterated truncation scheme whose
deviations are close to minimax. In order to calibrate
the truncation and obtain explicit confidence intervals, 
it is necessary to dispose of a prior bound either
on the variance or the kurtosis. When a prior bound 
on the kurtosis is available, we obtain as a by-product
a new variance estimator with good large deviation properties.
When no prior bound is available, it is still possible 
to use Lepski's approach to adapt to the unknown variance, 
although it is no more possible to obtain observable confidence
intervals.  \\[1ex]
{\sc 2010 Mathematics Subject Classification:}
62G05, 62G35.\\[1ex]
{\sc Keywords:} 
Non-parametric estimation, Robustness, 
Truncation, Mean estimator, Variance estimator, Kurtosis, 
Non asymptotic deviation bounds, PAC-Bayesian theorems.
}
\pagestyle{fancy}

\newcommand\eqdef{\triangleq}
\newcommand\A{\mathcal{A}}
\newcommand\cB{\mathcal{B}}
\newcommand\cC{\Theta}
\newcommand\E{\mathbb{E}}
\newcommand\cE{\mathcal{E}}
\newcommand\cF{\mathcal{F}}
\newcommand\cH{\mathcal{H}}
\newcommand\I{\mathcal{I}}
\newcommand\J{\mathcal{J}}
\newcommand\cK{\mathcal{K}}
\newcommand\cL{\mathcal{L}}
\newcommand\M{\mathcal{M}}
\newcommand\N{\mathcal{N}}
\renewcommand\P{\mathbb{P}}
\newcommand\R{\mathbb{R}}
\renewcommand\S{\mathcal{S}}
\newcommand\bcR{B} 
\newcommand\cR{\tilde{B}} 
\newcommand\W{\mathcal{W}}
\newcommand\X{\mathcal{X}}
\newcommand\Y{\mathcal{Y}}
\newcommand\Z{\mathcal{Z}}

\newcommand\jyem{\em}

\newcommand\hdelta{\hat{\delta}}
\newcommand\hth{\hat{\theta}}
\newcommand\tth{\tilde{\theta}}
\newcommand\hlam{\hat{\lam}}
\newcommand\lamerm{\hlam^{\textnormal{(erm)}}}

\newcommand\tf{\tilde{f}}
\newcommand\tlam{\tilde{\lam}}

\newcommand\be{\beta}
\newcommand\ga{\gamma}
\newcommand\kap{\kappa}
\newcommand\lam{\lambda}

\newcommand\sint{{\textstyle \int}}
\newcommand\inth{\sint \pi(d\theta)}
\newcommand\inthj{\sint \pij(d\theta)}
\newcommand\inthp{\sint \pi(d\theta')}

\newcommand\gas{\ga^*}

\newcommand\hpi{\hat{\pi}}
\newcommand\pis{\pi^*}
\newcommand\tpi{\tilde{\pi}}

\newcommand\ovth{\ov{\theta}}
\newcommand\wth{\wt{\theta}}
\newcommand\wthj{\wt{\theta}_j}

\newcommand\eps{\varepsilon}
\newcommand\logeps{\log(\eps^{-1})}
\newcommand\leps{\log^2(\eps^{-1})}

\newcommand{\bi}{\begin{itemize}}
\newcommand{\ei}{\end{itemize}}

\newcommand\ovR{\ov{R}}

\newcommand\hhpi{\hat{\hat{\pi}}}
\newcommand\wwth{\wt{\wt{\theta}}}
\newcommand\pia{\pi^{(1)}}
\newcommand\pib{\pi^{(2)}}
\newcommand\pij{\pi^{(j)}}
\newcommand\tpia{\tilde{\pi}^{(1)}}
\newcommand\tpib{\tilde{\pi}^{(2)}}
\newcommand\tpij{\tilde{\pi}^{(j)}}
\newcommand\wtha{\wt{\theta}_1}
\newcommand\wthb{\wt{\theta}_2}
\newcommand\wths{\wt{\theta}_s}
\newcommand\wtht{\wt{\theta}_t}

\newcommand\cmin{c_{\min}}
\newcommand\cmax{c_{\max}}

\newcommand\ra{\rightarrow}

\newcommand\hatt{\hat{t}}
\newcommand\argmax{\textnormal{argmax}}
\newcommand\argmin{\textnormal{argmin}}

\newcommand\diag{\textnormal{Diag}}

\newcommand\Fg{\cF^\#}

\newcommand\vp{\varphi}
\newcommand\tvp{\tilde{\vp}}
\newcommand\tphi{\tilde{\phi}}
\renewcommand\th{\theta}

\newcommand\vsp{\vspace{1cm}}
\newcommand\lhs{\text{l.h.s.}}
\newcommand\rhs{\text{r.h.s.}}

\newcommand\expe[2]{\undc{\E}{#1\sim#2}}
\newcommand\expec[2]{\undc{\E}{#1\sim#2}}
\newcommand\expecc[2]{\E_{#1}}
\newcommand\expecd[2]{\E_{#2}}

\newcommand\hC{\hat{C}}
\newcommand\hf{\hat{f}}
\newcommand\hrho{\hat{\rho}}

\newcommand\br{\bar{r}}
\newcommand\chr{\check{r}}
\newcommand\bR{\bar{R}}

\newcommand\lan{\langle}
\newcommand\ran{\rangle}

\newcommand\logepsg{\log(|\Cg|\eps^{-1})}

\newcommand\Pemp{\hat{\P}}

\newcommand\fracl[2]{{(#1)}/{#2}}
\newcommand\fracc[2]{{#1}/{#2}}
\newcommand\fracr[2]{{#1}/{(#2)}}
\newcommand\fracb[2]{{(#1)}/{(#2)}}

\newcommand\hfproj{\hf^{\textnormal{(proj)}}}
\newcommand\thproj{\hat{\th}^{\textnormal{(proj)}}}

\newcommand\hfols{\hf^{\textnormal{(ols)}}}
\newcommand\thfols{\tilde{f}^{\textnormal{(ols)}}}
\newcommand\thols{\hat{\th}^{\textnormal{(ols)}}}
\newcommand\therm{\hat{\th}^{\textnormal{(erm)}}}
\newcommand\hferm{\hf^{\textnormal{(erm)}}}
\newcommand\zols{\zeta^{\textnormal{(ols)}}}

\newcommand\thrid{\tilde{\th}} 
\newcommand\frid{\tilde{f}} 
\newcommand\freg{f^{\textnormal{(reg)}}}
\newcommand\hfrlam{\hf^{\textnormal{(ridge)}}} 
\newcommand\hfllam{\hf^{\textnormal{(lasso)}}} 

\newcommand\flin{f^*_{\textnormal{lin}}}
\newcommand\thlin{\th^{\textnormal{(lin)}}}
\newcommand\Flin{\mathcal{F}_{\textnormal{lin}}}

\renewcommand\Phi{\XX}
\newcommand\demi{\frac{1}{2}}
\newcommand\demic{\fracc{1}{2}}

\newcommand\substa[2]{\substack{#1\\#2}}
\newcommand\substac[2]{{#1\,;\,#2}}
\newcommand\tpsi{\tilde{\psi}}
\newcommand\tzeta{\tilde{\zeta}}
\newcommand\ta{\tilde{a}}
\newcommand\chis{\chi_\sigma}
\newcommand\tchi{\tilde{\chi}}
\newcommand\tchis{\tchi_\sigma}
\newcommand\psis{\psi_\sigma}

\newcommand\tA{\tilde{A}}
\newcommand\tL{\tilde{L}}

\newcommand\hL{\hat{L}}
\newcommand\hcE{\hat{\cE}}
\newcommand\hDe{\hat{\cE}}
\newcommand\cEb{\cE^{\sharp}}
\newcommand\La{L^{\flat}}
\newcommand\cEa{\cE^{\flat}}
\newcommand\Lb{L^{\sharp}}

\newcommand\Pa{P^{\flat}}
\newcommand\Pb{P^{\sharp}}

\newcommand\ela{\tilde{\ell}}
\newcommand\hpig{\hpi^{\textnormal{(Gibbs)}}}

\newcommand\sigmb{\phi}
\newcommand{\tR}{\tilde{\cR}}
\newcommand{\logdeps}{\log(4d\eps^{-1})}
\newcommand{\logddeps}{\log(2d^2\eps^{-1})}

\section*{Introduction}
This paper is devoted to the estimation of the mean of a real random variable
from an independent identically distributed sample. We will emphasize the 
following issues : 
\begin{itemize}
\item obtaining non asymptotic confidence intervals;
\item getting high confidence levels; 
\item proving nearly minimax bounds in the class of 
distributions with a bounded variance and in the class of distributions 
with a bounded kurtosis. 
\end{itemize}
To achieve these goals, we combine two kinds of tools: truncated estimates and PAC-Bayesian theorems (
\cite{McA98,McA99,McA01,Cat05,Aud03b,Alq08}).

The general conclusion is that the empirical mean estimate behaves poorly at high confidence levels and that the worst case is reached for heavy tailed distributions, as the proofs of the lower bounds show.

This is the bad news. The good news is that, using iterated truncation schemes, 
it is possible to recover confidence intervals whose widths are close
to the (optimal) width of the confidence interval of the empirical mean of 
a Gaussian distribution, even at very high confidence levels. From a technical point of view, it is possible to build an estimator with an exponential tail even when the sample distribution has only a finite variance. This came out 
to us as a surprise while working on the more elaborate topic of 
regression estimation \cite{Cat09}, and gave us the spur to work out the estimation 
of the mean in details, this simpler case lending itself to tighter
computations. 

The weakest hypothesis we will consider is the existence of 
a finite variance. While it is possible to adapt a truncation scheme
when the variance is unknown, using Lepski's approach, 
some more information is required to compute an observable confidence interval. 
We study two situations: the case when the variance or some upper bound 
is known and the case when the kurtosis or some upper bound is known. 
In order to assess the quality of the results, we prove corresponding
lower bounds for the best estimator of the worst distribution (following thus the minimax approach), 
and for the empirical mean estimate of the worst distribution, 
to assess the improvement brought 
by the PAC-Bayesian truncation scheme. We plot the numerical values 
of these upper and lower bounds for typical finite 
sample sizes to show the gap between them.

Let us end this introduction with a few words in favour of high confidence
levels. One reason to seek them is when the estimated quantity is critical
from a safety or economical point of view. We will not elaborate on this. 
Another setting where high confidence levels are required is when 
lots of estimates are to be computed and compared in some statistical 
learning scenario. Let us imagine, for instance, that some parameter
$\theta \in \Theta$ is to be tuned in order to optimize the answer
of some loss function $f_{\theta}$ to some random input $X$. Let us 
consider a split sample scheme where two i.i.d. samples $X_1, \dots, 
X_s \overset{\text{def}}{=} X_1^s$ and $X_{s+1}, \dots, X_{s+n} 
\overset{\text{def}}{=} X_{s+1}^{s+n}$ are used, one to build some 
estimators $\wh{\theta}_k(X_1^s)$ of $\argmin_{\theta \in \Theta_k} \B{E} 
\bigl[ f_{\theta}(X) \bigr]$ in subsets $\Theta_k$, $k=1, \dots, K$
of $\Theta$, and the other to test those estimators and keep hopefully
the best. This is a very common model selection situation. 
One can think for instance of the choice of a basis to expand 
some regression function. If $K$ is large, estimates of 
$\B{E} \bigl[ f_{\wh{\theta}_k(X_1^s)}(X_{s+1}) \bigr]$ will be required
for a lot of values of $k$. In order to keep safe from over-fitting,
very high confidence levels will be required if the resulting
confidence level is to be computed through a union bound (because
no special structure of the problem can be used to do better).
Namely, a confidence level of $1 - \epsilon$ on the final result
of the optimization on the test sample will require a confidence
level of $1 - \epsilon/K$ for each mean estimate on the test sample. 
Even if $\epsilon$ is not very small (like, say, $5/100$), 
$\epsilon/K$ may be very small. For instance, if 10 parameters
are to be selected among a set of 100, this gives $K = 
{100 \choose 10} \simeq 1.7 \cdot 10^{13}$. 
In practice some heuristic scheme will be used 
to compute only a limited number of estimators $\wh{\theta}_k$, like adding parameters one at a time, 
 choosing
at each step the one with the best estimated performance increase (in our example, this requires to compute $1000$ estimators instead of 
${100 \choose 10}$). Nonetheless,
asserting the quality of the resulting choice 
requires a union 
bound on the whole set of possible outcomes of the 
data driven heuristic, and therefore calls for  
very high confidence levels for each estimate of the mean performance 
$\B{E} \bigl[ f_{\wh{\theta}_k (X_1^s)}(X_{s+1}) \bigr]$ 
on the test set. 

Our study has several reasons to recommend itself as addressing 
the question of robust statistics: we prove distribution free bounds, 
truncation operates mainly on outliers and the lower bounds 
show that the worst behaviour of the empirical mean is achieved
on heavy tailed distributions. Anyhow, our point of view is quite
different from the classical setting of robust statistics, as 
epitomised by Peter Huber \cite{Huber}. Indeed, our framework
is not perturbative, --- we do not assume the sample to be drawn 
from a mixture of known and unknown distributions ---, and is not 
asymptotic either, since it is based on finite sample exponential
inequalities for suitable auxiliary variables. 
Moreover Huber's approach is a minimax study of the 
variance of estimators, whereas we analyze the minimax
properties of their deviations.
From a more practical point of view, the fact that the empirical 
mean is unstable is well known, and any statistical package
provides tools to deal with outliers. It is interesting though
that it shows in the equations, even when no sample contamination is 
assumed, by simply considering a minimax setting on a broad 
set of distributions including heavy tailed ones, and looking
at the deviations of estimators, rather than 
focussing on their variance. 

\section{Some truncated mean estimate}

Let $(Y_i)_{i=1}^n$ be an i.i.d. sample drawn from some unknown probability 
distribution $\B{P}$ on the real line $\B{R}$ equipped with the
Borel $\sigma$-algebra $\C{B}$. Let $Y$ be independent from $(Y_i)_{i=1}^n$ 
with the same marginal distribution $\B{P}$. 
Let $m$ be the mean of $Y$ and let $v$ be its variance: 
$$ 
\B{E}(Y) = m \quad \text{ and } \quad \B{E}[ (Y - m)^2 ] = v.
$$

Starting from some initial guess $\theta_0$ about the value 
of the mean, prior to any observation, let us consider the 
thresholded estimator
\begin{equation}
\label{eq1.1.2}
\wh{\theta}_{\alpha}(\theta_0)  = \theta_0 + \frac{1}{n \alpha} 
\sum_{i=1}^n T \bigl[ \alpha (Y_i - \theta_0) \bigr], 
\end{equation}
where the threshold function $T$ is defined as
$$
T(x) = \frac{1}{2} \log \left( \frac{\ds 1 + x + \frac{x^2}{2}}{
\ds 1 - x + \frac{x^2}{2}} \right).
$$
\mbox{} \hfill \includegraphics{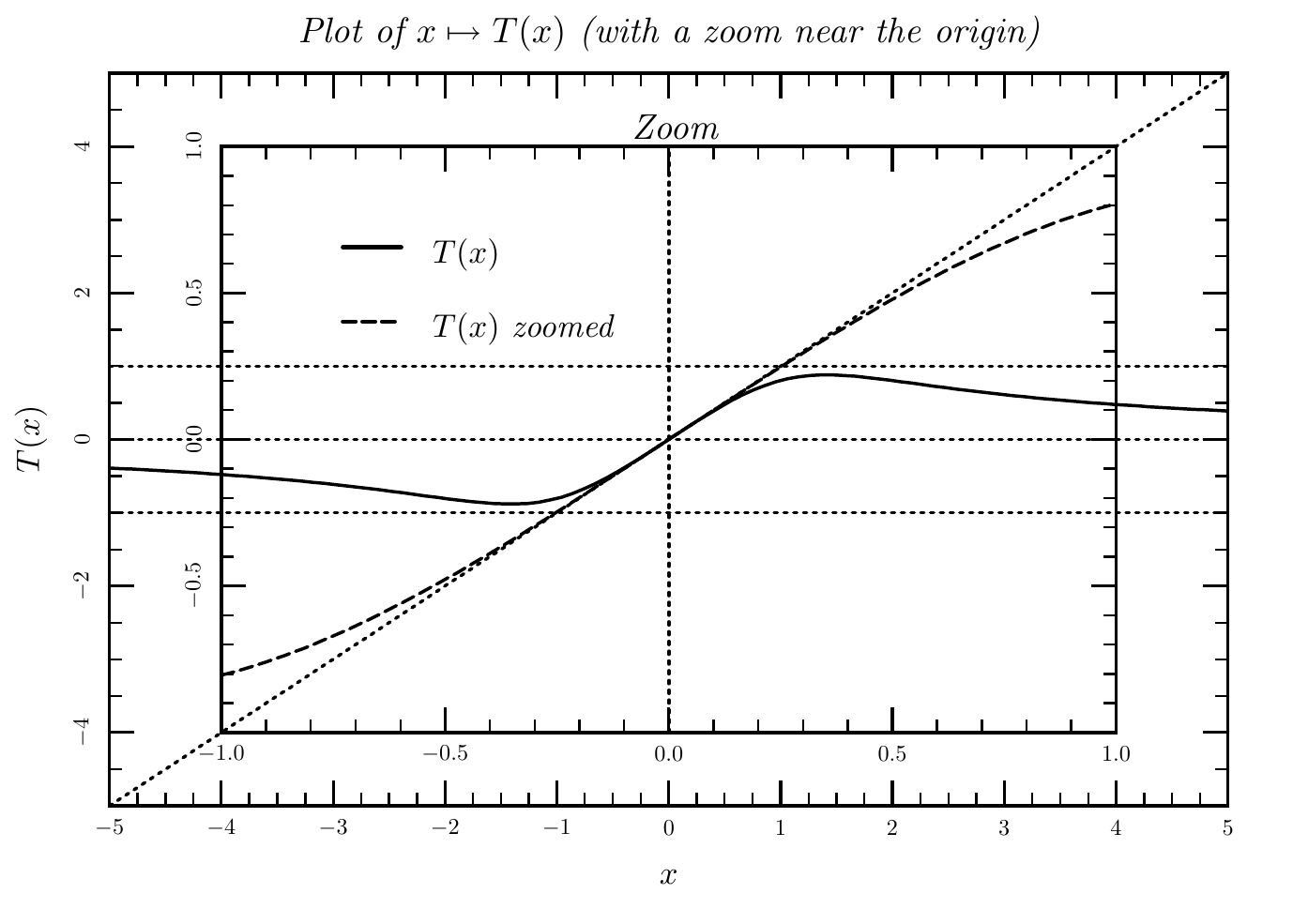}   
\hfill \mbox{}\\[-8ex]
\begin{prop}
\label{prop3.1}
Assume that $v \leq v_0$ and $\lvert \theta_0 - m \rvert 
\leq \delta_0$, where $v_0$ and $\delta_0$ are known prior bounds.
With probability at least $1 - 2 \epsilon$, 
$$
\lvert \wh{\theta}_{\alpha}(\theta_0)  - m \rvert \leq \frac{\alpha v_0}{2} 
+ \frac{\log(\epsilon^{-1})}{n \alpha} 
+ \frac{\alpha^2 \delta_0}{2} \bigl( 1 + \alpha \delta_0 
\bigr) \biggl( \frac{\delta_0^2}{3} + v_0 \biggr).
$$

Choosing $\ds \alpha = \sqrt{\frac{2 \log(\epsilon^{-1})}{nv_0}}$, we 
get, with probability $1 - 2 \epsilon$, 
$$
\lvert \wh{\theta}_{\alpha}(\theta_0)  - m \rvert 
\leq \sqrt{\frac{2 v_0 \log(\epsilon^{-1})}{n}} 
+ \frac{\log(\epsilon^{-1}) \delta_0}{3 n v_0} \bigl( 
\delta_0^2 + 3 v_0 \bigr) \biggl( 1 + \delta_0 
\sqrt{\frac{2 \log(\epsilon^{-1})}{nv_0}} \biggr).
$$
Choosing 
$\ds \alpha = \sqrt{\frac{2}{nv_0}}$ independently of $\epsilon$
we get with  
probability at least $1 - 2 \epsilon$,
$$
\lvert \wh{\theta}_{\alpha}(\theta_0) - m \rvert \leq  
\bigl[ 1 + \log(\epsilon^{-1}) \bigr] 
\sqrt{\frac{v_0}{2n}} + \frac{\delta_0}{3 n v_0} \bigl(\delta_0^2 
+ 3v_0 \bigr) \biggl( 1 + \delta_0 \sqrt{\frac{2}{nv_0}} \biggr).
$$
\end{prop}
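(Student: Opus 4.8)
The plan is to run a Cram\'er--Chernoff argument on the two tails of $\wh{\theta}_\alpha(\theta_0) - m$, the whole point being two pointwise inequalities satisfied by $T$. First I would record that $\exp\bigl(T(x)\bigr) \le 1 + x + \tfrac{x^2}{2}$ and $\exp\bigl(-T(x)\bigr) \le 1 - x + \tfrac{x^2}{2}$ for every real $x$. Both reduce to the single identity $\bigl(1 - x + \tfrac{x^2}{2}\bigr)\bigl(1 + x + \tfrac{x^2}{2}\bigr) = 1 + \tfrac{x^4}{4} \ge 1$: squaring the first claimed inequality (both sides are positive) and using this identity proves it, and the second follows because $T$ is odd.

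Next I would exploit independence. Writing $S = \sum_{i=1}^n T\bigl[\alpha(Y_i - \theta_0)\bigr]$, the i.i.d.\ assumption gives $\B{E}\exp(S) = \bigl(\B{E}\exp\bigl(T[\alpha(Y-\theta_0)]\bigr)\bigr)^n$, and the exponential Markov inequality at level $\epsilon$ turns a bound on this single-draw transform into a bound on $S$, hence, after dividing by $n\alpha$ and recalling $\wh{\theta}_\alpha(\theta_0) = \theta_0 + S/(n\alpha)$, into an upper bound on $\wh{\theta}_\alpha(\theta_0) - m$ holding with probability $1-\epsilon$. The lower tail is identical with $-T$ in place of $T$, and a union bound produces the confidence $1 - 2\epsilon$. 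To bound the single-draw transform I would center at $m$: writing $Y - \theta_0 = (Y - m) + d$ with $d = m - \theta_0$ and taking expectations of the pointwise inequality yields $\B{E}\exp\bigl(T[\alpha(Y - \theta_0)]\bigr) \le 1 + \alpha d + \tfrac{\alpha^2}{2}(v + d^2)$, using $\B{E}(Y - m) = 0$ and $\B{E}(Y-m)^2 = v$.

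The delicate point is that the crude step $\log(1 + u) \le u$ applied here would leave a term $\tfrac{\alpha}{2}d^2$, of order $\alpha\delta_0^2$, whereas the proposition only admits a correction of order $\alpha^2\delta_0$. The gain comes from keeping the exponential: I would factor $1 + \alpha d + \tfrac{\alpha^2}{2}(v+d^2) = \bigl(1 + \alpha d + \tfrac{\alpha^2 d^2}{2}\bigr)\bigl(1 + \tfrac{\alpha^2 v/2}{1 + \alpha d + \alpha^2 d^2/2}\bigr)$ (the first factor is always positive) and take logarithms with $\log(1+u) \le u$. When $\alpha d \ge 0$ the elementary bound $1 + \alpha d + \tfrac{\alpha^2 d^2}{2} \le e^{\alpha d}$ makes the whole transform at most $\exp\bigl(\alpha d + \tfrac{\alpha^2 v}{2}\bigr)$, with no correction at all. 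When $\alpha d < 0$ a genuine residue survives in the exponent, equal to $\bigl[\log(1 + \alpha d + \tfrac{\alpha^2 d^2}{2}) - \alpha d\bigr] + \tfrac{\alpha^2 v}{2}\cdot\tfrac{-\alpha d - \alpha^2 d^2/2}{1 + \alpha d + \alpha^2 d^2/2}$, and this is the source of the correction term. Since $\theta_0$ may lie on either side of $m$, for each tail exactly one sign of $d$ is harmless and the opposite sign produces this residue, so the symmetric correction must be carried on both tails.

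The main obstacle is therefore the purely one-variable estimate of this residue. Setting $p = \alpha\lvert d\rvert \in [0, \alpha\delta_0]$, the two pieces are $\log(1 - p + \tfrac{p^2}{2}) + p$ and $\tfrac{\alpha^2 v}{2}\cdot\tfrac{p - p^2/2}{1 - p + p^2/2}$. I would show $\log(1 - p + \tfrac{p^2}{2}) + p \le \tfrac{p^3}{6}(1 + p)$ and $\tfrac{p - p^2/2}{1 - p + p^2/2} \le p(1 + p)$ for all $p \ge 0$; the second reduces, after clearing the positive denominator, to $0 \le \tfrac{p}{2}(1 - p + p^2)$, which is immediate, while the first is a routine monotonicity check (the two sides agree to order $p^3$ and the $x^4$ coefficients compare as $\tfrac{1}{8} \le \tfrac{1}{6}$). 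Feeding $p \le \alpha\delta_0$ and $v \le v_0$ and collecting the two pieces yields exactly $\tfrac{\alpha^3\delta_0}{2}(1 + \alpha\delta_0)\bigl(\tfrac{\delta_0^2}{3} + v_0\bigr)$ for the per-draw exponent residue; after the Chernoff normalization by $n\alpha$ this becomes the stated correction, while the leading part supplies $\tfrac{\alpha v_0}{2} + \tfrac{\log(\epsilon^{-1})}{n\alpha}$. The two displayed specializations then follow by substituting $\alpha = \sqrt{2\log(\epsilon^{-1})/(nv_0)}$ and $\alpha = \sqrt{2/(nv_0)}$ into the first inequality and simplifying.
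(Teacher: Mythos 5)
Your proof is correct and follows essentially the same route as the paper's: the same pointwise bounds $e^{\pm T(x)} \le 1 \pm x + \tfrac{x^2}{2}$ (via the identity $(1+x+\tfrac{x^2}{2})(1-x+\tfrac{x^2}{2}) = 1+\tfrac{x^4}{4}$), the same exponential Chebyshev argument tail by tail, the same factorization of the single-draw log-transform, and your key one-variable inequality $\log(1-p+\tfrac{p^2}{2})+p \le \tfrac{p^3}{6}(1+p)$ is exactly the paper's Lemma 9.1 evaluated at $-p$, proved there too by a derivative/discriminant check. The only cosmetic differences are your multiplicative phrasing of the truncation bounds and the explicit case split on the sign of $m-\theta_0$, where the paper instead keeps signed terms and passes to absolute values at the end.
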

\begin{prop}
\label{prop3.2}
Assume that $v \leq v_0$ and $\lvert m - \theta_0 \rvert \leq \delta_0$, 
where $v_0$ and $\delta_0$ are known prior bounds. 
With probability at least $1 - 2 \epsilon$, 
$$
\lvert \wh{\theta}_{\alpha}(\theta_0) - m \rvert \leq \frac{\alpha(v_0 + \delta_0^2)}{2} 
+ \frac{\log(\epsilon^{-1})}{n \alpha}.
$$
Choosing $\ds \alpha = \sqrt{\frac{2 \log(\epsilon^{-1})}{
n (v_0 + \delta_0^2)}}$, we get
$$
\lvert \wh{\theta}_{\alpha}(\theta_0) - m \rvert \leq 
\sqrt{\frac{2 (v_0 + \delta_0^2) \log(\epsilon^{-1})}{n}}.
$$
\end{prop}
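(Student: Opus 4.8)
The plan is to run the standard Cramér–Chernoff exponential-moment method, the only non-routine ingredient being an elementary inequality for the influence function $T$. First I would record the two-sided bound
\[
\exp\bigl(\pm T(x)\bigr) \leq 1 \pm x + \tfrac{x^2}{2}, \qquad x \in \R,
\]
which reduces, upon squaring $\exp(T(x)) = \sqrt{(1+x+x^2/2)/(1-x+x^2/2)}$ and using that $1 \mp x + x^2/2 > 0$ for all real $x$, to the single polynomial inequality $(1+x+x^2/2)(1-x+x^2/2) = 1 + x^4/4 \geq 1$; the version with the minus sign then follows from the antisymmetry $T(-x) = -T(x)$.

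Next I would apply this with $x = \alpha(Y - \theta_0)$ to a single observation. Taking expectations gives $\B{E}\exp\bigl(T[\alpha(Y-\theta_0)]\bigr) \le 1 + \alpha\,\B{E}(Y-\theta_0) + \tfrac{\alpha^2}{2}\B{E}[(Y-\theta_0)^2]$, where I would substitute $\B{E}(Y-\theta_0) = m - \theta_0$ and, crucially, $\B{E}[(Y-\theta_0)^2] = v + (m-\theta_0)^2 \le v_0 + \delta_0^2$; this is precisely the point where both hypotheses $v \le v_0$ and $\lvert m-\theta_0 \rvert \le \delta_0$ enter. Combined with $1 + u \le e^u$ this yields, for one term,
\[
\B{E}\exp\bigl(T[\alpha(Y-\theta_0)]\bigr) \le \exp\Bigl(\alpha(m-\theta_0) + \tfrac{\alpha^2}{2}(v_0+\delta_0^2)\Bigr).
\]

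By independence the exponential moment of $\sum_{i=1}^n T[\alpha(Y_i-\theta_0)]$ factorizes into the $n$-th power of the preceding quantity. A Chernoff bound at level $\epsilon$ then produces, with probability at least $1-\epsilon$,
\[
\frac{1}{n\alpha}\sum_{i=1}^n T[\alpha(Y_i-\theta_0)] \le (m-\theta_0) + \frac{\alpha(v_0+\delta_0^2)}{2} + \frac{\log(\epsilon^{-1})}{n\alpha}.
\]
Recognizing the left-hand side as $\wh{\theta}_\alpha(\theta_0) - \theta_0$ gives the upper one-sided deviation of $\wh{\theta}_\alpha(\theta_0) - m$; repeating the argument verbatim with $-T$ (using the other sign of the key inequality) gives the matching lower deviation, and a union bound over the two events yields the stated two-sided bound with probability $1 - 2\epsilon$. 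Finally I would optimize the explicit function $\alpha \mapsto \tfrac{\alpha(v_0+\delta_0^2)}{2} + \tfrac{\log(\epsilon^{-1})}{n\alpha}$, whose minimizer is $\alpha = \sqrt{2\log(\epsilon^{-1})/\bigl(n(v_0+\delta_0^2)\bigr)}$ and whose minimal value is $\sqrt{2(v_0+\delta_0^2)\log(\epsilon^{-1})/n}$, giving the second displayed inequality.

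There is essentially no obstacle beyond verifying the polynomial inequality for $T$; the rest is routine tensorization and Markov's inequality. The comparison with Proposition~\ref{prop3.1} is instructive, since the sharper leading term $\alpha v_0/2$ there arises from expanding the second moment around $m$ and retaining the bias contribution $(m-\theta_0)^2$ as a separate correction of higher order in $\alpha$, whereas here I deliberately use the cruder estimate $\B{E}[(Y-\theta_0)^2] \le v_0 + \delta_0^2$ in order to obtain the clean symmetric form.
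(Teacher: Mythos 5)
Your proof is correct and takes essentially the same route as the paper: your two-sided bound $\exp\bigl(\pm T(x)\bigr) \leq 1 \pm x + \tfrac{x^2}{2}$, verified via $(1+x+\tfrac{x^2}{2})(1-x+\tfrac{x^2}{2}) = 1 + \tfrac{x^4}{4} \geq 1$, is exactly the paper's comparison $T_- \leq T \leq T_+$, and your Chernoff bound combined with $1+u \leq e^u$ reproduces the paper's intermediate deviation inequality followed by its ``coarse inequality'' $\log(1+x) \leq x$. The only cosmetic difference is that the paper obtains the result as a corollary of a bound already established in the proof of Proposition \ref{prop3.1}, whereas you rederive the chain self-contained, bounding $\B{E}\bigl[(Y-\theta_0)^2\bigr]$ by $v_0+\delta_0^2$ before tensorizing rather than after.
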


Let us remark that the estimates proved here are valid for any confidence
level $1 - 2 \epsilon$. In particular, when $\wh{\theta}_{\alpha}(\theta_0)$ is 
independent of $\epsilon$, it has a subexponential tail distribution, 
even in the case when $\B{P}$ has not. The proofs are gathered in the last section 
of the paper.

\section{Iterated mean estimates}

The width of the confidence intervals proved in the previous section 
depends heavily on the value of the prior bound $\delta_0$. 
On the other hand, they lend themselves naturally to an 
iterated scheme. Here we will iterate Proposition \thmref{prop3.2}, 
where the dependence of the bound on $\delta_0$ is the best.

\begin{prop}
\label{prop2.1}
Let us assume that $v \leq v_0$ and $\lvert m - \theta_0 \rvert 
\leq \delta_0$, where $v_0$, $\theta_0$ and $\delta_0$ are known 
prior to observing the sample.
Let $U_i$, $i=2, \dots, k$ be uniform real random variables 
in the interval $(-1, +1)$, independent of each other and of 
everything else. 
Let us define 
\begin{align*}
\delta_1 & = \sqrt{\frac{2 (v_0 + \delta_0^2) \log(\epsilon_1^{-1})}{n}}, \\
\alpha_1 & = \sqrt{\frac{2 \log(\epsilon_1^{-1})}{n(v_0 + \delta_0^2)}}, \\
\wt{\theta}_1 & = \wh{\theta}_{\alpha_1}(\theta_0), \\
& \vdots\\
\gamma_i & = \log ( 1 + x_i^{-1}),\\
\delta_{i} & = \sqrt{\frac{2 \bigl[ v_0 +(1 + x_i)^2 \delta_{i-1}^2 \bigr] 
\bigl[ \log(\epsilon_i^{-1}) + \gamma_{i} \bigr] }{n}},\\
\alpha_i & = \sqrt{\frac{2 \bigl[ \log(\epsilon_i^{-1}) + \gamma_{i} \bigr]}{
n \bigl[ v_0 + (1 + x_i)^2 \delta_{i-1}^2 \bigr] }},\\
\wt{\theta}_i & = \wh{\theta}_{\alpha_i}(\wt{\theta}_{i-1} + 
x_i \delta_{i-1} U_i),\\
& \vdots
\end{align*}
With probability at least $\ds 1 - 2 \sum_{i=1}^k \epsilon_i$, 
$$
\lvert m - \wt{\theta}_k \rvert \leq \delta_k.
$$
\end{prop}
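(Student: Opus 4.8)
The plan is to argue by induction on $i$, the inductive hypothesis being that the event $\Omega_{i-1} \eqdef \{\lvert m - \wt{\theta}_{i-1}\rvert \leq \delta_{i-1}\}$ fails with $\B{P}$-probability at most $2\sum_{j=1}^{i-1}\epsilon_j$. The base case $i=1$ is exactly Proposition~\ref{prop3.2} applied to the deterministic center $\theta_0$ with prior bounds $v_0,\delta_0$ and confidence parameter $\epsilon_1$, since $\delta_1$ and $\alpha_1$ are precisely the calibrated quantities appearing there; it yields $\B{P}(\Omega_1^c)\leq 2\epsilon_1$. Observe that every $\delta_i$, and hence every $\alpha_i$, is a deterministic constant, being built recursively from $v_0,\delta_0,n$ and the fixed numbers $x_i,\epsilon_i$; this will matter below.

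For the inductive step, note first that on $\Omega_{i-1}$ the triangle inequality and $\lvert U_i\rvert\leq 1$ give
$$\lvert m - (\wt{\theta}_{i-1} + x_i\delta_{i-1}U_i)\rvert \leq \delta_{i-1} + x_i\delta_{i-1} = (1+x_i)\delta_{i-1},$$
so the random center $c_i \eqdef \wt{\theta}_{i-1} + x_i\delta_{i-1}U_i$ of the $i$-th truncation lies within $(1+x_i)\delta_{i-1}$ of $m$. The engine behind Proposition~\ref{prop3.2} is the exponential-moment inequality
$$\B{E}\exp\Bigl(n\alpha_i\bigl(\wh{\theta}_{\alpha_i}(c) - m\bigr) - \tfrac{1}{2}n\alpha_i^2\bigl(v + (m-c)^2\bigr)\Bigr) \leq 1,$$
valid for every \emph{fixed} center $c$ (with a symmetric inequality controlling the lower tail via $\exp(-T(x))\leq 1 - x + x^2/2$), which follows from the defining property $\exp(T(x))\leq 1 + x + x^2/2$ together with independence. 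For $c$ in the interval $J_i \eqdef (m-(1+x_i)\delta_{i-1},\, m+(1+x_i)\delta_{i-1})$ one has $(m-c)^2\leq (1+x_i)^2\delta_{i-1}^2$, so, using $v\leq v_0$, the subtracted term is at most $B \eqdef \tfrac{1}{2}n\alpha_i^2[v_0 + (1+x_i)^2\delta_{i-1}^2]$; hence $\B{E}\exp(G_i(c))\leq 1$ for all $c\in J_i$, where $G_i(c)\eqdef n\alpha_i(\wh{\theta}_{\alpha_i}(c)-m) - B$.

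The main obstacle is that $c_i$ depends on the very sample used to form $\wh{\theta}_{\alpha_i}(c_i)$, through $\wt{\theta}_{i-1}$, so Proposition~\ref{prop3.2} cannot be invoked directly. This is resolved by a PAC-Bayesian change of measure, which is exactly what the randomization $U_i$ is designed to enable. Let $\pi_i$ be the \emph{data-independent} uniform law on $J_i$ --- it may legitimately depend on the unknown constant $m$, since it is only an analysis device and does not enter the estimator --- and let $\rho_i$ be the conditional law of $c_i$ given the past, i.e.\ the uniform law on $I_i \eqdef (\wt{\theta}_{i-1}-x_i\delta_{i-1},\,\wt{\theta}_{i-1}+x_i\delta_{i-1})$. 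On $\Omega_{i-1}$ one has $I_i\subseteq J_i$, whence $\rho_i\ll\pi_i$ with constant density ratio
$$\frac{d\rho_i}{d\pi_i} = \frac{\lvert J_i\rvert}{\lvert I_i\rvert} = \frac{1+x_i}{x_i} = 1 + x_i^{-1} = e^{\gamma_i} \quad\text{on } I_i.$$
Applying Markov's inequality to the draw of $U_i$ conditionally on the sample (and on $U_2,\dots,U_{i-1}$), then bounding $\int e^{G_i}\,d\rho_i \leq e^{\gamma_i}\int e^{G_i}\,d\pi_i$ on $\Omega_{i-1}$, and finally integrating over the sample by Fubini using $\int \B{E}\exp(G_i(c))\,d\pi_i(c)\leq 1$, yields
$$\B{P}\Bigl(\{G_i(c_i) > \gamma_i + \log(\epsilon_i^{-1})\}\cap\Omega_{i-1}\Bigr) \leq \epsilon_i.$$
Since $\wh{\theta}_{\alpha_i}(c_i)=\wt{\theta}_i$ and the calibrated choice of $\alpha_i$ makes $\frac{B+\gamma_i+\log(\epsilon_i^{-1})}{n\alpha_i}=\delta_i$, the event inside is exactly $\{\wt{\theta}_i - m > \delta_i\}$.

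Repeating the argument with the symmetric exponential-moment inequality controls the lower tail $\{\wt{\theta}_i - m < -\delta_i\}$, so the conditional failure probability of $\Omega_i$ given $\Omega_{i-1}$ is at most $2\epsilon_i$; combined with the inductive hypothesis and a union bound this gives $\B{P}(\Omega_i^c)\leq 2\sum_{j=1}^{i}\epsilon_j$, closing the induction and yielding the claim at $i=k$. The only delicate point is the one just highlighted --- decoupling the data-dependent center from the sample --- and the key quantitative fact is that the cost of this change of measure is precisely the essential supremum of the density ratio, namely $e^{\gamma_i}$, which is why the correction $\gamma_i = \log(1+x_i^{-1})$ appears additively next to $\log(\epsilon_i^{-1})$ in both $\delta_i$ and $\alpha_i$.
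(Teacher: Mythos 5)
Your proof is correct and takes essentially the same route as the paper: the same PAC-Bayesian decoupling, with the deterministic prior uniform on the interval of half-width $(1+x_i)\delta_{i-1}$ centered at the unknown mean $m$, the conditional law of the randomized center as posterior, the same density-ratio cost $e^{\gamma_i} = 1 + x_i^{-1}$, the same exponential moment bound for fixed centers, and the same induction with union bound. The only (cosmetic) difference is bookkeeping of the good event: the paper enforces absolute continuity unconditionally by coupling with a modified center $\ov{\theta}_i$ that is re-centered at $m$ when the previous estimate fails, and then observes that $\ov{\theta}_i$ coincides with the actual center on the good event, whereas you intersect the deviation event with $\Omega_{i-1}$, use the density-ratio bound there, and drop the indicator before Fubini --- the two devices are interchangeable.
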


Let us see how it behaves, choosing $x_i = 1/10$ and 
$\epsilon_1 = \dots = \epsilon_{k-1} = (\epsilon - 
\epsilon_k)/(k-1) = \epsilon/10$. The following two plots
of $\delta_k$ against $\epsilon$ show that this iterated estimate permit very large
values of the prior bound $\delta_0$, without 
any substantial loss of accuracy (for a suitable number of iterations).\\
\mbox{} \hfill \includegraphics{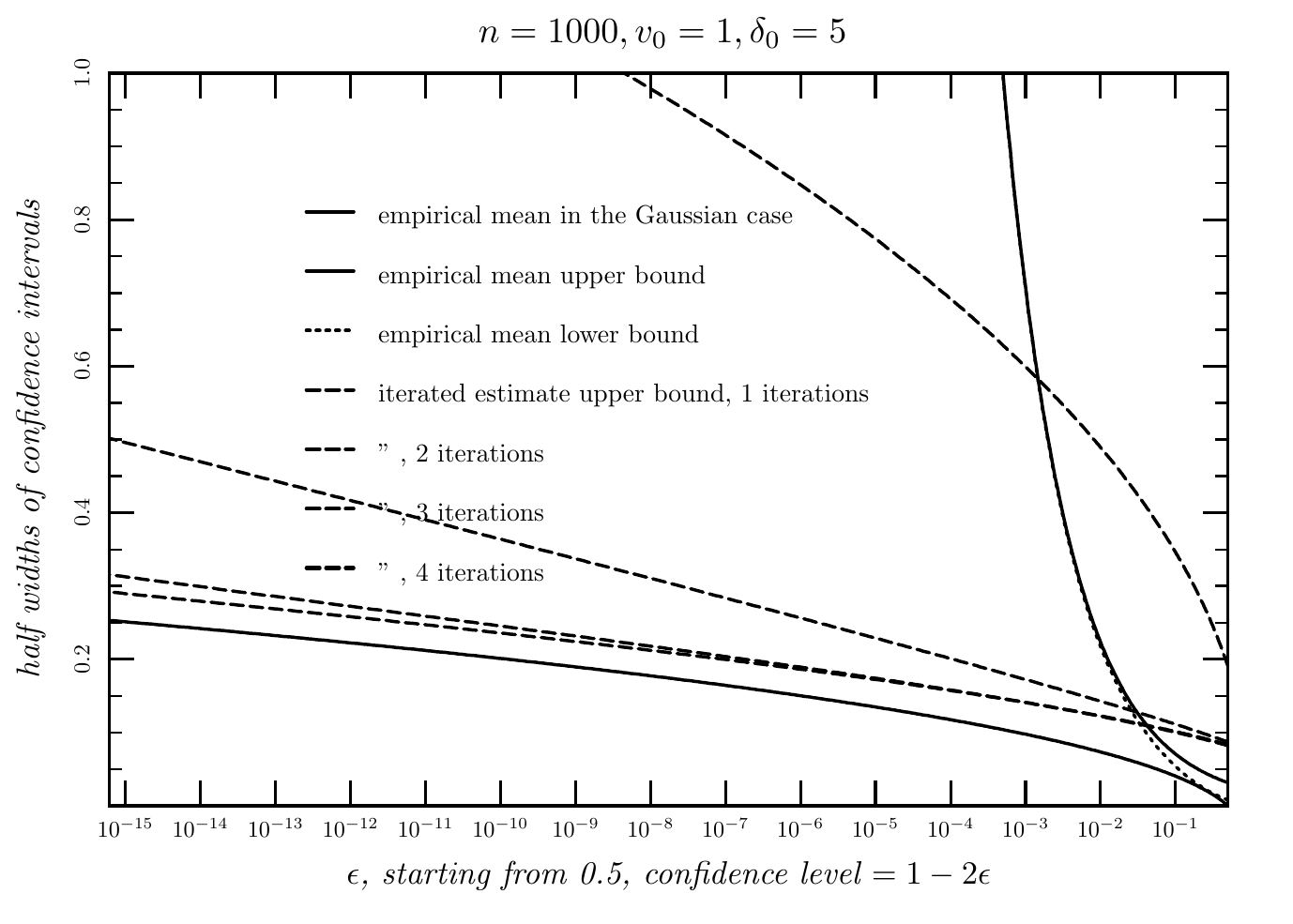}   
\hfill \mbox{}\\
\mbox{} \hfill \includegraphics{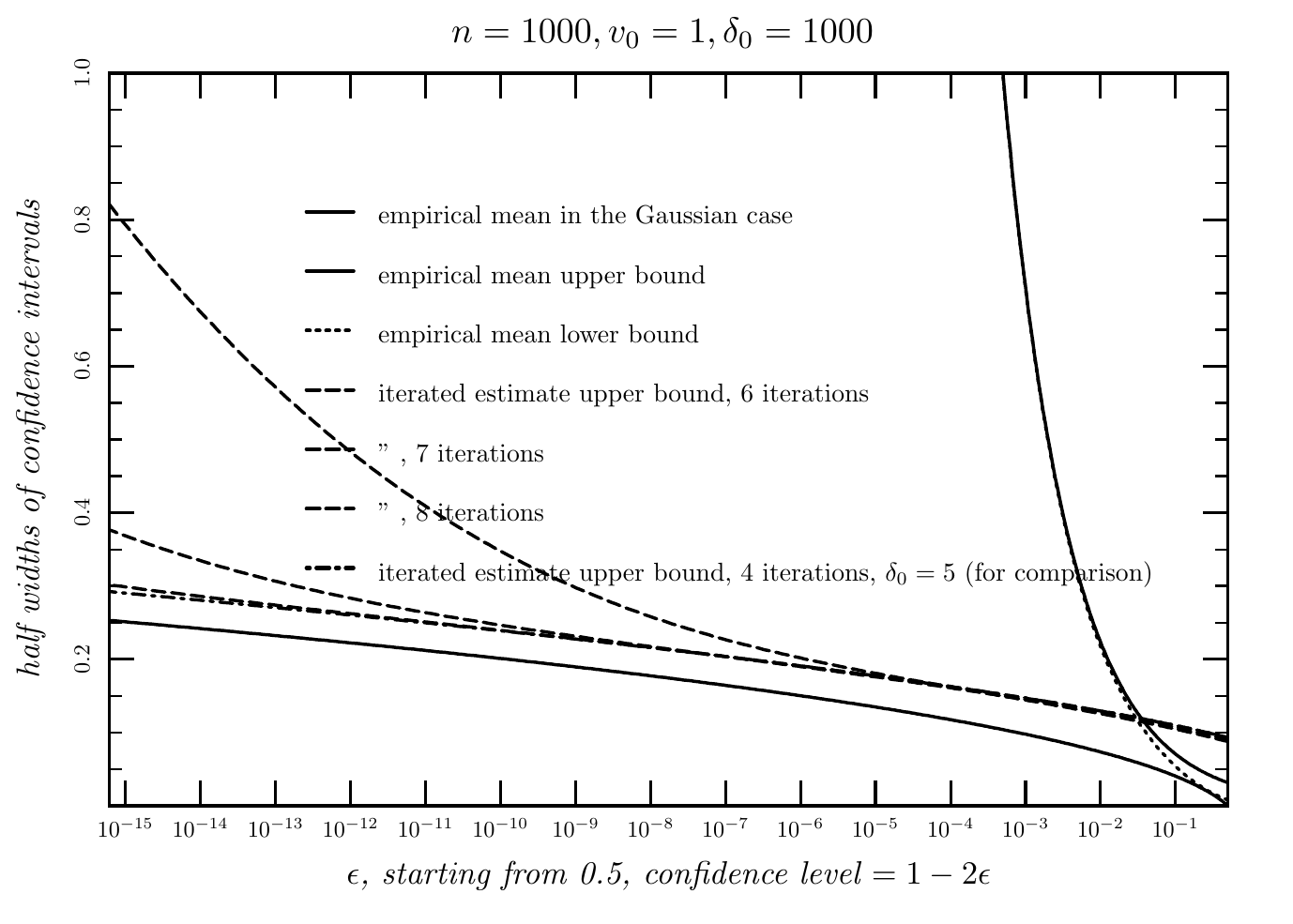}   
\hfill \mbox{}\\[-8ex]
\section{With no prior knowledge of the mean}

In the case when the prior bound $\delta_0$ is not available, we can 
modify the iterated scheme of the previous section, using the empirical 
mean estimator as a first step.

\begin{prop}
\label{prop3.1.2}
Let us assume that $v \leq v_0$, where $v_0$ is a known prior 
bound. Let $U_i$, $i = 2, \dots, k$ be uniform real random variables
in the interval $(-1, +1)$, independent of each other and of everything 
else. Let us define 
\begin{align*}
\delta_1 & = \sqrt{\frac{v_0}{2 n \epsilon_1}},\\
\wt{\theta}_1 & = \frac{1}{n} \sum_{i=1}^n Y_i,\\
& \vdots\\
\gamma_i & = \log( 1 + x_i^{-1}),\\
\delta_i & = \sqrt{\frac{2 \bigl[ v_0 + (1 + x_i)^2 \delta_{i-1}^2 
\bigr] \bigl[ \log (\epsilon_i^{-1}) + \gamma_i \bigr]}{n}},\\
\alpha_i & = \sqrt{ \frac{2 \bigl[ \log(\epsilon_i^{-1}) + \gamma_i \bigr]}{
n \bigl[ v_0 + (1 + x_i)^2 \delta_{i-1}^2 \bigr]}},\\
\wt{\theta}_i & = \wh{\theta}_{\alpha_i} 
\bigl( \wt{\theta}_{i-1} + x_i \delta_{i-1} U_i \bigr),\\
& \vdots
\end{align*}
With probability at least $1 - 2 \sum_{i=1}^k \epsilon_i$, 
$$
\bigl\lvert m - \wt{\theta}_k \bigr\rvert \leq \delta_k.
$$
\end{prop}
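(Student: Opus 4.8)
The plan is to run a single induction on $i$ and control the ``good events'' $A_i = \bigl\{ \lvert m - \wt{\theta}_i \rvert \le \delta_i \bigr\}$, assembling the steps by a telescoping union bound. If $\neg A_k$ holds then either $\neg A_1$ holds or there is a first index $i \in \{2, \dots, k\}$ with $A_{i-1}$ true and $A_i$ false, so
\[
\neg A_k \subseteq \neg A_1 \cup \bigcup_{i=2}^{k} \bigl( A_{i-1} \cap \neg A_i \bigr),
\]
and it suffices to show $\B{P}(\neg A_1) \le 2 \epsilon_1$ together with $\B{P}(A_{i-1} \cap \neg A_i) \le 2 \epsilon_i$ for $2 \le i \le k$; summing yields the claimed failure probability $2 \sum_{i=1}^k \epsilon_i$. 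The key structural remark is that, apart from its first step, this proposition is \emph{verbatim} Proposition \ref{prop2.1}: the recursions defining $\gamma_i$, $\delta_i$, $\alpha_i$ and $\wt{\theta}_i$ are identical, so I would handle each event $A_{i-1} \cap \neg A_i$ with $i \ge 2$ by exactly the argument used there, and genuinely new work is needed only for the base case.

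For the base case $\wt{\theta}_1$ is the empirical mean, so I would simply invoke Chebyshev's inequality: since $\B{E}(\wt{\theta}_1) = m$ and $\Var(\wt{\theta}_1) = v/n \le v_0/n$,
\[
\B{P}\bigl( \lvert \wt{\theta}_1 - m \rvert \ge \delta_1 \bigr) \le \frac{v_0}{n \delta_1^2} = 2 \epsilon_1 ,
\]
the last equality being just the definition $\delta_1^2 = v_0 / (2 n \epsilon_1)$. This is the only place where the absence of the prior bound $\delta_0$ is felt: it costs a crude polynomial (Chebyshev) localization of width $\propto \epsilon_1^{-1/2}$ at the first step, which the truncated iterations then sharpen to the sub-exponential scale $\sqrt{v_0 \log(\epsilon_i^{-1})/n}$.

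For the inductive step I would reuse the exponential-moment mechanism behind Proposition \ref{prop3.2}. Writing $S(\theta) = \sum_{j=1}^n T\bigl[ \alpha_i (Y_j - \theta) \bigr]$, the elementary bound $T(x) \le \log\bigl( 1 + x + x^2/2 \bigr)$ gives, for every \emph{fixed} center $\theta$, the estimate $\B{E}_Y \exp\bigl[ S(\theta) - n \alpha_i (m - \theta) - \tfrac{1}{2} n \alpha_i^2 \bigl( v + (m-\theta)^2 \bigr) \bigr] \le 1$. On $A_{i-1}$ the random center $\theta^{(i)} = \wt{\theta}_{i-1} + x_i \delta_{i-1} U_i$ satisfies $\lvert m - \theta^{(i)} \rvert \le (1 + x_i) \delta_{i-1}$, so its conditional law $\rho_i$ (uniform on the interval of half-width $x_i \delta_{i-1}$ about $\wt{\theta}_{i-1}$) is supported in the fixed, \emph{data-independent} interval $J_i = [\, m - (1+x_i)\delta_{i-1}, \, m + (1+x_i)\delta_{i-1} \,]$, and the density $d\rho_i / d\pi_i$ with respect to the uniform law $\pi_i$ on $J_i$ is bounded by $(1+x_i)/x_i = 1 + x_i^{-1} = e^{\gamma_i}$ --- this is precisely the origin of the correction $\gamma_i$. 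Over $J_i$ one may replace $v + (m-\theta)^2$ by the constant $v_0 + (1+x_i)^2 \delta_{i-1}^2$ (which only makes the subtracted term larger, hence preserves the bound $\le 1$); integrating the exponential moment against the data-independent $\pi_i$ and using Tonelli gives a quantity of $\B{E}_Y$-mean at most $1$. Absorbing the change of measure $d\rho_i/d\pi_i \le e^{\gamma_i}$ inside the conditional expectation over $U_i$ and then applying Markov's inequality \emph{once} to the joint law of $(Y, U_i)$ yields
\[
\B{P}\Bigl( A_{i-1} \cap \bigl\{ S(\theta^{(i)}) > n \alpha_i (m - \theta^{(i)}) + \tfrac{1}{2} n \alpha_i^2 \bigl( v_0 + (1+x_i)^2 \delta_{i-1}^2 \bigr) + \log(\epsilon_i^{-1}) + \gamma_i \bigr\} \Bigr) \le \epsilon_i .
\]
Since $\wt{\theta}_i - \theta^{(i)} = S(\theta^{(i)}) / (n \alpha_i)$, and the choice of $\alpha_i$ makes $\tfrac{1}{2} \alpha_i \bigl( v_0 + (1+x_i)^2 \delta_{i-1}^2 \bigr) + \bigl( \log(\epsilon_i^{-1}) + \gamma_i \bigr) / (n \alpha_i)$ equal to $\delta_i$, this is exactly $\B{P}\bigl( A_{i-1} \cap \{ \wt{\theta}_i - m > \delta_i \} \bigr) \le \epsilon_i$; the symmetric lower-tail bound follows from $T(x) \ge -\log\bigl( 1 - x + x^2/2 \bigr)$, giving $\B{P}(A_{i-1} \cap \neg A_i) \le 2 \epsilon_i$. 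The main obstacle is precisely this step, and within it the fact that the center $\theta^{(i)}$ depends, through $\wt{\theta}_{i-1}$, on the very sample $(Y_j)$ used to form $\wt{\theta}_i$, so one cannot condition naively; the auxiliary randomization $U_i$ combined with the change of measure to the data-independent reference $\pi_i$ is exactly what decouples them, at the controlled price $\gamma_i = \log(1 + x_i^{-1})$.
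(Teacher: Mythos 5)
Your proposal is correct and follows essentially the same route as the paper: Chebyshev's inequality applied to the empirical mean for the base case, then the PAC-Bayesian change of measure to the data-independent uniform prior on $m + (1+x_i)\delta_{i-1}\times(-1,+1)$ at the price $\gamma_i = \log(1+x_i^{-1})$, Fubini, and the exponential Chebyshev inequality, assembled by a union bound over the $k$ steps. The only difference is immaterial bookkeeping: the paper decouples the random center from the sample by replacing it with a coupled variant $\ov{\theta}_i$ forced into the good interval when the previous step fails, whereas you keep the true center, insert the indicator of $A_{i-1}$ before taking the exponential moment, and use a first-failure decomposition --- both devices yield the same factor $e^{\gamma_i}$ and the same total failure probability $2\sum_{i=1}^k \epsilon_i$.
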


So in this iterated scheme, we start from the empirical mean 
estimator and improve it gradually. We will show later
that the confidence interval used here for the empirical 
mean is close to optimal in the worst case. What the next 
plot shows therefore, is that the iterated estimate
brings a huge improvement for high confidence levels, 
allowing to stay close to the deviations of the empirical mean 
of a Gaussian distribution for confidence levels virtually as 
high as wished: this iterative truncation scheme behaves almost 
as the empirical mean estimate of a Gaussian distribution 
would behave, for any distribution with a known finite variance, 
and beats the empirical mean in the worst case for confidence levels
starting from around $94 \%$ for a sample of size $1000$.\\
\mbox{} \hfill \includegraphics{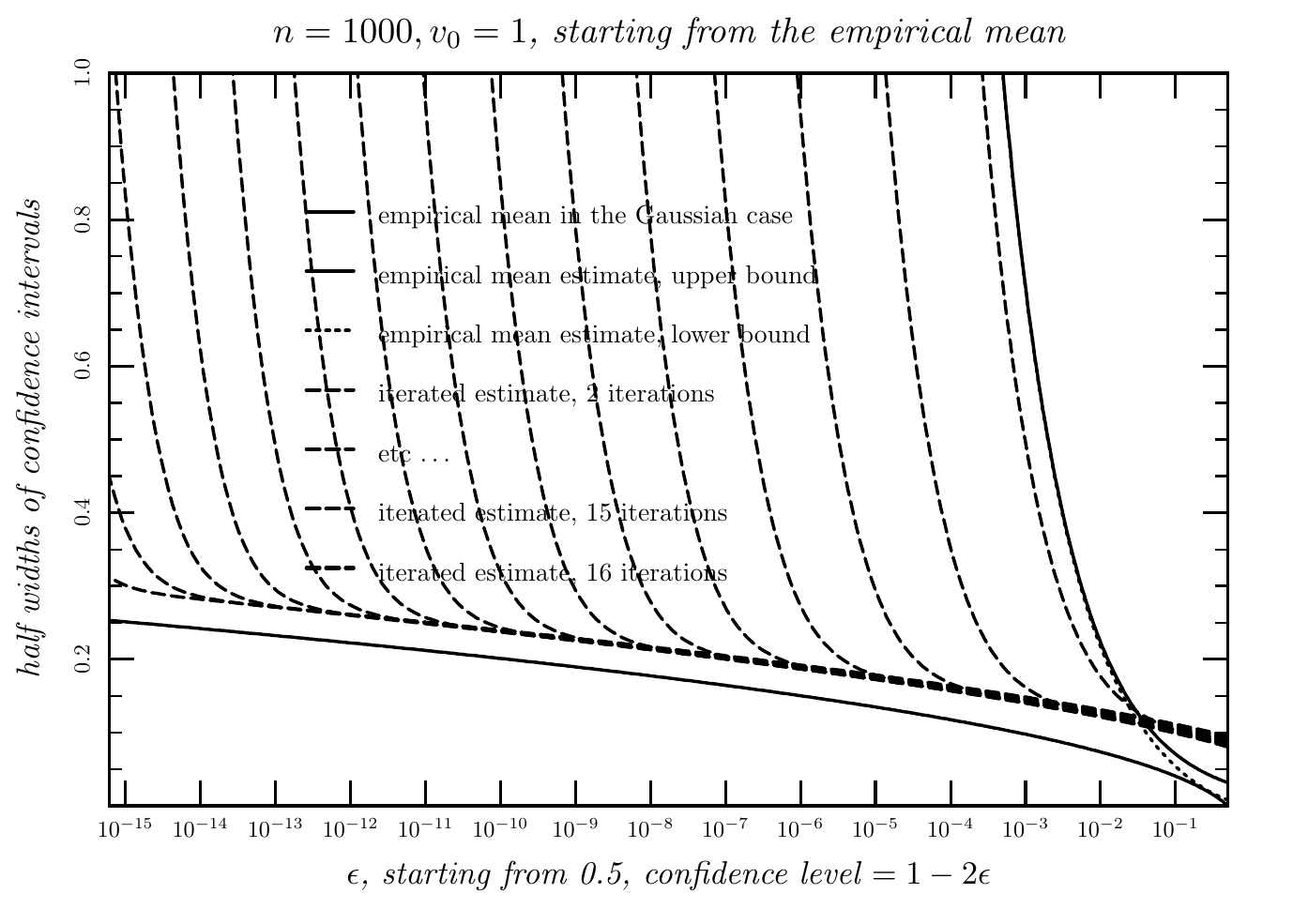}   
\hfill \mbox{}\\[-8ex]
\section{Last step improvement}

In this section, we introduce a more elaborate estimate
to perform the last step of the iteration.
The result of the previous steps will be described as $\wt{\theta}_1$, 
assumed to be some mean estimator
satisfying with probability at least $1 - 2 \epsilon_1$
\begin{equation}
\label{eq4.1}
\lvert m - \wt{\theta}_1 \rvert \leq \delta_1.
\end{equation}

Let us consider, for any $\theta_0 \in \B{R}$,  
the Gaussian distribution on the real line with variance 
$(n \beta  \alpha^2)^{-1}$ 
and mean $\theta_0$,  where $\alpha$ and $\beta$ are 
positive parameters to be chosen later:
$$
\rho_{\theta_0}(d \theta) = \sqrt{\frac{n \beta \alpha^2}{2 \pi}} 
\exp \biggl[ - \frac{n \beta \alpha^2}{2} \bigl(\theta - \theta_0\bigr)^2 \biggr] 
d \theta.
$$
This will serve to define some truncated mean estimate
\begin{multline}
\label{eq1.1.4}
M_{\alpha}(\theta_0) 
= \frac{1}{n \alpha} \sum_{i=1}^n \log \biggl\{ 
\tint \rho_{\theta_0}(d \theta) \biggl[ 
1 + \alpha( Y_i - \theta_0 ) + \frac{\alpha^2}{2} \bigl( 
Y_i - \theta_0 \bigr)^2 \biggr] \biggr\}
\\ = \frac{1}{n \alpha} \sum_{i=1}^n 
\log \biggl\{ 1 + \alpha(Y_i - \theta_0) + \frac{\alpha^2}{2} 
\bigl(Y_i - \theta_0 \bigr)^2 + \frac{1}{2 \beta n} \biggr\}. 
\end{multline}
\begin{proposition}
\label{prop2.1.1}
With probability at least $1 - \epsilon$, for any $\theta_0 \in \B{R}$, 
\begin{multline*}
-n \alpha M_{\alpha}(\theta_0) \leq 
n \alpha(\theta_0 - m) \\ + \frac{n \alpha^2}{2} \Bigl[ (\theta_0 - m)^2 
+ v \Bigr] + \frac{1}{2 \beta} + \frac{n \beta \alpha^2}{2} (\theta_0 - m)^2 
- \log(\epsilon).
\end{multline*}
\end{proposition}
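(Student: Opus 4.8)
The plan is to read \eqref{eq1.1.4} as a PAC-Bayesian inequality whose posterior is the Gaussian $\rho_{\theta_0}$ and whose prior is the Gaussian $\rho_m$ centred at the true (deterministic, though unknown) mean $m$, both having variance $(n\beta\alpha^2)^{-1}$. Set $\phi_i(\theta)=1+\alpha(Y_i-\theta)+\frac{\alpha^2}{2}(Y_i-\theta)^2$, so that the first expression in \eqref{eq1.1.4} reads $n\alpha M_\alpha(\theta_0)=\sum_{i=1}^n\log\int\rho_{\theta_0}(d\theta)\,\phi_i(\theta)$. Since $\log$ is concave, Jensen's inequality gives
\[
n\alpha M_\alpha(\theta_0)\ \geq\ \int\rho_{\theta_0}(d\theta)\sum_{i=1}^n\log\phi_i(\theta),
\]
so it is enough to bound the right-hand side from below, uniformly in $\theta_0$. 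Note first that $\phi_i\geq\frac12$ everywhere (the minimum of $w\mapsto 1+w+\frac{w^2}{2}$ is $\frac12$, attained at $w=-1$); hence all the logarithms and inverses used below are finite and bounded.

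Next I would run the standard exponential-moment argument with the prior $\rho_m$. For a fresh copy $Y$ and a fixed $\theta$ write $\phi(\theta)=1+\alpha(Y-\theta)+\frac{\alpha^2}{2}(Y-\theta)^2$ and consider
\[
Z=\int\rho_m(d\theta)\,\exp\Bigl(-\sum_{i=1}^n\log\phi_i(\theta)\Bigr)\,\E\bigl[\phi(\theta)^{-1}\bigr]^{-n}.
\]
By independence of the $Y_i$ and Fubini, $\E(Z)=\int\rho_m(d\theta)\,\E[\phi(\theta)^{-1}]^{n}\,\E[\phi(\theta)^{-1}]^{-n}=1$, so Markov's inequality gives $Z\leq\epsilon^{-1}$ with probability at least $1-\epsilon$. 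On that single event, the Legendre (Donsker--Varadhan) duality applied to the prior $\rho_m$ holds simultaneously for every posterior, and in particular for every $\rho_{\theta_0}$:
\[
-\int\rho_{\theta_0}(d\theta)\sum_{i=1}^n\log\phi_i(\theta)\ \leq\ \C{K}(\rho_{\theta_0},\rho_m)-\log(\epsilon)+n\int\rho_{\theta_0}(d\theta)\,\log\E\bigl[\phi(\theta)^{-1}\bigr].
\]
This is precisely where the ``for any $\theta_0$'' is obtained at no extra cost, the variational formula being valid for all posteriors on the same probability-$(1-\epsilon)$ event.

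It then remains to identify the three terms on the right. The Kullback divergence between two Gaussians of common variance $(n\beta\alpha^2)^{-1}$ and means $\theta_0$ and $m$ equals $\C{K}(\rho_{\theta_0},\rho_m)=\frac{n\beta\alpha^2}{2}(\theta_0-m)^2$, which is exactly the extra term in the statement. For the inverse moment I would use the elementary identity $\bigl(1+w+\frac{w^2}{2}\bigr)\bigl(1-w+\frac{w^2}{2}\bigr)=1+\frac{w^4}{4}\geq 1$, together with $1-w+\frac{w^2}{2}>0$; taking $w=\alpha(Y-\theta)$ this yields $\phi(\theta)^{-1}\leq 1-\alpha(Y-\theta)+\frac{\alpha^2}{2}(Y-\theta)^2$, hence, after taking expectations and using $\log(1+u)\leq u$,
\[
\log\E\bigl[\phi(\theta)^{-1}\bigr]\ \leq\ \alpha(\theta-m)+\frac{\alpha^2}{2}\bigl[(\theta-m)^2+v\bigr].
\]
Integrating this bound against $\rho_{\theta_0}$, and using that $\theta$ has mean $\theta_0$ and variance $(n\beta\alpha^2)^{-1}$ under $\rho_{\theta_0}$ (so that $\int\rho_{\theta_0}(\theta-m)=\theta_0-m$ and $\int\rho_{\theta_0}(\theta-m)^2=(\theta_0-m)^2+(n\beta\alpha^2)^{-1}$), produces $n\alpha(\theta_0-m)+\frac{n\alpha^2}{2}[(\theta_0-m)^2+v]+\frac{1}{2\beta}$. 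Combining the three contributions gives exactly the asserted inequality.

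The Gaussian moment computations and the divergence formula are routine. The two substantive ideas, and the places where I would be most careful, are the choice of a prior centred at the unknown mean $m$ --- legitimate precisely because $m$ is a fixed constant rather than an observable, so $\rho_m$ is a genuine data-free prior even though it cannot be used to build an estimator --- and the one-line inverse-moment inequality $\phi(\theta)^{-1}\leq 1-\alpha(Y-\theta)+\frac{\alpha^2}{2}(Y-\theta)^2$. The latter is the crux: it bounds $\E[\phi^{-1}]$ by the same quadratic $\alpha(\theta-m)+\frac{\alpha^2}{2}[(\theta-m)^2+v]$ that controls $\E[\phi]$, which is what makes the PAC-Bayesian computation close onto the stated right-hand side. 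A preliminary sanity check that $\phi\geq\frac12$, guaranteeing integrability of $\phi^{-1}$ and of $\log\phi_i$ against the Gaussians, should be made before any of the manipulations above.
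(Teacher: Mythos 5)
Your proposal is correct and follows essentially the same route as the paper's own proof: Jensen's inequality to pull the Gaussian integration out of the logarithm, the algebraic identity $\bigl(1+w+\tfrac{w^2}{2}\bigr)\bigl(1-w+\tfrac{w^2}{2}\bigr)=1+\tfrac{w^4}{4}$ (the paper's Equation \myeq{eq9.1.2}), a PAC-Bayesian/Donsker--Varadhan argument with the prior $\rho_m$ centred at the unknown mean, the Kullback divergence between two Gaussians of equal variance, and the bound $\log(1+u)\leq u$ combined with the Gaussian first and second moments. The only differences are cosmetic: you apply the reciprocal bound at the level of $\B{E}\bigl[\phi(\theta)^{-1}\bigr]$ whereas the paper applies it pointwise to the integrand before taking exponential moments, and you obtain the uniformity in $\theta_0$ by Markov's inequality on $Z$ followed by duality on the resulting event, which is the same argument as the paper's exponential moment of the supremum over $\theta_0$ followed by Chebyshev's exponential inequality.
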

Let us insist on the fact that this result holds with probability 
$1 - 2 \epsilon$ {\em uniformly} with respect 
to $\theta_0$, which may therefore be a random variable --- such as
$\wt{\theta}_1$ --- if required.
\begin{prop}
\label{prop1.2}
For any $\theta_0 \in \B{R}$, with probability at least $1 - \epsilon$, 
$$
n \alpha M_{\alpha}(\theta_0) 
\leq - n \alpha (\theta_0 - m) + \frac{n \alpha^2}{2} 
\biggl[ (\theta_0 - m)^2 + v \biggr] + \frac{1}{2\beta}  
- \log(\epsilon).
$$
\end{prop}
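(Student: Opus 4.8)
The plan is to treat this as a direct exponential-moment (Chernoff) bound for a \emph{fixed} $\theta_0$; since the statement is pointwise in $\theta_0$ and not uniform, no PAC-Bayesian machinery is needed, and the Gaussian smoothing $\rho_{\theta_0}$ enters only through the already-integrated second form in \eqref{eq1.1.4}. Write
$$
W_i = 1 + \alpha(Y_i - \theta_0) + \frac{\alpha^2}{2}(Y_i - \theta_0)^2 + \frac{1}{2\beta n},
$$
so that $n\alpha M_{\alpha}(\theta_0) = \sum_{i=1}^n \log W_i$. First I would record the elementary identity $1 + u + \tfrac{u^2}{2} = \tfrac12(1+u)^2 + \tfrac12 \geq \tfrac12$, whence $W_i \geq \tfrac12 + \tfrac{1}{2\beta n} > 0$; this guarantees that each $\log W_i$ is well defined and that $\prod_{i=1}^n W_i$ is a nonnegative random variable to which Markov's inequality applies.

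The core step is the Chernoff bound itself. Denoting by $B$ the right-hand side of the asserted inequality, the event $\{n\alpha M_{\alpha}(\theta_0) > B\}$ coincides with $\{\prod_{i=1}^n W_i > e^{B}\}$, so by Markov's inequality its probability is at most $e^{-B}\,\B{E}\bigl[\prod_{i=1}^n W_i\bigr]$. Because the $Y_i$ are i.i.d., this factorizes as $e^{-B}(\B{E}\, W_1)^n$. I would then evaluate $\B{E}\, W_1$ from the two moment identities $\B{E}(Y - \theta_0) = m - \theta_0$ and $\B{E}\bigl[(Y-\theta_0)^2\bigr] = v + (m-\theta_0)^2$, giving
$$
\B{E}\, W_1 = 1 + \alpha(m - \theta_0) + \frac{\alpha^2}{2}\bigl[v + (m-\theta_0)^2\bigr] + \frac{1}{2\beta n},
$$
a finite quantity since $v < \infty$.

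To pass from the $n$-th power to an exponential, I would apply $\log(1+x) \leq x$ to $\B{E}\, W_1$, obtaining
$$
(\B{E}\, W_1)^n \leq \exp\biggl( n\alpha(m-\theta_0) + \frac{n\alpha^2}{2}\bigl[v + (m-\theta_0)^2\bigr] + \frac{1}{2\beta}\biggr).
$$
Substituting the explicit value $B = -\,n\alpha(\theta_0 - m) + \tfrac{n\alpha^2}{2}[(\theta_0-m)^2 + v] + \tfrac{1}{2\beta} - \log(\epsilon)$ into $e^{-B}(\B{E}\, W_1)^n$, every term cancels: the linear terms $n\alpha(\theta_0-m)$ and $n\alpha(m-\theta_0)$ are opposite, the quadratic-and-variance contributions are identical with opposite signs (using $(\theta_0-m)^2 = (m-\theta_0)^2$), and the two copies of $\tfrac{1}{2\beta}$ cancel, leaving exactly $e^{\log\epsilon} = \epsilon$. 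Hence $\B{P}\bigl(n\alpha M_{\alpha}(\theta_0) > B\bigr) \leq \epsilon$, which is the assertion.

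No step here is a genuine obstacle: the argument is the textbook Markov/Chernoff recipe, and the only thing demanding care is the bookkeeping. Specifically, one must keep the sign conventions straight so that the cancellation in the final substitution is exact, and confirm that the constant $\tfrac{1}{2\beta n}$ built into $W_i$ — the residue of the Gaussian smoothing in \eqref{eq1.1.4} — correctly produces the term $\tfrac{1}{2\beta}$ after multiplication by the factor $n$ coming from the product over the sample. This companion to Proposition \ref{prop2.1.1} controls the opposite-sign deviation; the analogous bound applied to $\prod_{i=1}^n W_i^{-1}$ would yield the lower tail, but that is not needed here.
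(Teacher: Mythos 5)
Your proof is correct and follows essentially the same route as the paper: the paper likewise bounds $\B{E}\bigl\{\exp\bigl[n\alpha M_{\alpha}(\theta_0)\bigr]\bigr\}$ by $\bigl\{1-\alpha(\theta_0-m)+\tfrac{\alpha^2}{2}[(\theta_0-m)^2+v]+\tfrac{1}{2n\beta}\bigr\}^n$ and concludes via the exponential Chebyshev inequality together with $\log(1+x)\leq x$. You have merely written out explicitly the factorization, the moment computation, and the cancellation that the paper leaves as ``straightforward.''
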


\begin{prop}
\label{prop1.4}
Let $v \leq v_0$, where $v_0$ is some known prior bound.
Let $\wt{\theta}_1$ be some estimator satisfying equation \eqref{eq4.1}
with probability at least $1 - 2 \epsilon_1$.
Let us consider the estimator
\begin{equation}
\label{eq1.1.3}
\wh{\theta}_{\alpha} = \inf \Bigl\{ \theta \geq \wt{\theta}_1 - 
\delta_1 : M_{\alpha}(\theta) \leq 0 \Bigr\}.
\end{equation}
Let us define the ancillary function 
\begin{equation}
\label{ancil}
\varphi(x) = 
\begin{cases} \ds \frac{2 x}{1 + \sqrt{1 - 4x}} \leq \frac{x}{1 - 2x}, & 
x \leq 1/4,\\
\ds + \infty, & \text{otherwise}.
\end{cases}
\end{equation}
For any real positive constants $\epsilon_2$ and $\alpha$ such that    
\begin{multline*}
4 n \alpha \delta_1 \leq \bigl[ n \alpha^2 v + \beta^{-1} 
+ 2 \log(\epsilon_2^{-1}) \bigr]  \\ \qquad  \times \varphi \Biggl( 
\frac{(1 + \beta) \bigl[ n \alpha^2 v + \beta^{-1} + 2 \log(\epsilon_2^{-1})
\bigr]}{4n} \Biggr)^{-1}, 
\end{multline*}
which is the case at least when 
\begin{equation}
\epsilon_2 \geq \exp \Biggl\{ 
- n \biggl[\frac{1}{1+\beta} - \alpha \delta_1 
- \frac{\bigl(n \alpha^2 v + \beta^{-1} \bigr)}{2n}
\biggr] \Biggr\}, \label{eq1.1}
\end{equation}
with probability at least 
$1 - 2 \epsilon_1 - 2 \epsilon_2$, 
$$
\lvert m - \wh{\theta}_{\alpha} \rvert \leq 
\frac{2}{(1+\beta) \alpha} 
\varphi \Biggl( \frac{(1+\beta)
\bigl[ n \alpha^2 v + \beta^{-1} - 2 \log(\epsilon_2) \bigr]}{
4 n} \Biggr).
$$
Considering $\ds \alpha = 
\left(\frac{\beta^{-1} - 2 \log(\epsilon_2)}{nv_0}\right)^{1/2}$, 
we deduce that as soon as
\begin{equation}
\label{eq4.6}
2 \delta_1 \leq \sqrt{\frac{\bigl[ \beta^{-1} 
+ 2 \log(\epsilon_2^{-1}) \bigr] v_0}{n}} 
\; \varphi \Biggl( 
\frac{(1 + \beta) \bigl[ \beta^{-1} + 2 \log(\epsilon_2^{-1})\bigr]}{2n} 
\Biggr)^{-1},
\end{equation}
which is the case at least as soon as
$$
\epsilon_2 \geq \exp \left( \frac{1}{2 \beta} - \frac{\ds n v_0}{ \ds
2 (1+\beta)^2 \delta_1^2 + 8(1+\beta) v_0} \right),
$$
with probability at least $1 - 2 \epsilon_1 - 2 \epsilon_2$, 
$$
\lvert m - \wh{\theta}_{\alpha}
\rvert \leq 
\frac{2}{(1+\beta)} \left( \frac{nv_0}{\beta^{-1} - 2 \log(\epsilon_2)} 
\right)^{1/2} \varphi \left( 
\frac{(1 + \beta) \bigl[ \beta^{-1} - 2 \log(\epsilon_2) \bigr]}{2n} \right).
$$
\end{prop}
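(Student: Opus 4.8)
The plan is to split the two-sided estimate into an upper and a lower bound on $\wh{\theta}_{\alpha}$, each governed by one of the two preceding propositions, and to intersect the resulting events with the event $\lvert m - \wt{\theta}_1 \rvert \le \delta_1$ of \eqref{eq4.1}. First note that $\theta \mapsto M_{\alpha}(\theta)$ is continuous, since the bracket under each logarithm in \eqref{eq1.1.4} equals $\tfrac12\bigl(1+\alpha(Y_i-\theta)\bigr)^2 + \tfrac12 + \tfrac{1}{2\beta n} > 0$; hence $\wh{\theta}_{\alpha}$ is well defined and, whenever the defining set is nonempty, $M_{\alpha}(\wh{\theta}_{\alpha}) \le 0$. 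Introduce $D = n\alpha^2 v + \beta^{-1} + 2\log(\epsilon_2^{-1})$ together with $w_- = \tfrac{2}{(1+\beta)\alpha}\varphi\bigl(\tfrac{(1+\beta)D}{4n}\bigr)$ and $w_+ = \tfrac{1}{(1+\beta)\alpha}\bigl(1+\sqrt{1-\tfrac{(1+\beta)D}{n}}\bigr)$; the asserted right-hand side is exactly $w_-$. One may assume $\tfrac{(1+\beta)D}{4n}\le\tfrac14$, for otherwise $\varphi$ is infinite and the standing hypothesis forces $\delta_1=0$; this makes $w_\pm$ real. I would work on the intersection of the event of Proposition~\ref{prop1.2} at one fixed point, the event of Proposition~\ref{prop2.1.1}, and \eqref{eq4.1}, of probability at least $1-2\epsilon_1-2\epsilon_2$ by a union bound.

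For the upper bound I would apply Proposition~\ref{prop1.2} at the deterministic point $\theta_0 = m + w_-$ (deterministic because it depends only on the fixed, if unknown, value $m$). With $u = \theta_0 - m = w_-$ the estimate there reads $n\alpha M_{\alpha}(\theta_0) \le \tfrac{n\alpha^2}{2}u^2 - n\alpha u + \tfrac{D}{2}$, so it suffices to check that $w_-$ lies between the roots $\tfrac1\alpha\bigl(1\pm\sqrt{1-D/n}\bigr)$ of the right-hand quadratic. Using $1-\sqrt{1-4x}=2\varphi(x)$, its smaller root is $\tfrac2\alpha\varphi\bigl(\tfrac{D}{4n}\bigr)$, and the elementary monotonicity fact $\tfrac{1}{1+\beta}\varphi\bigl((1+\beta)y\bigr)\ge\varphi(y)$, immediate from \eqref{ancil}, gives $w_- \ge \tfrac2\alpha\varphi\bigl(\tfrac{D}{4n}\bigr)$; since moreover $\varphi\le\tfrac12$, one has $w_-\le\tfrac{1}{(1+\beta)\alpha}\le\tfrac1\alpha$, below the larger root. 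Therefore $M_{\alpha}(m+w_-)\le0$, and as $m+w_-\ge m\ge\wt{\theta}_1-\delta_1$ on \eqref{eq4.1}, the point $m+w_-$ belongs to the defining set, whence $\wh{\theta}_{\alpha}\le m+w_-$.

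For the lower bound I would use the uniform estimate of Proposition~\ref{prop2.1.1}: setting $w = m-\theta$ it yields $n\alpha M_{\alpha}(\theta) \ge -\tfrac{n\alpha^2(1+\beta)}{2}w^2 + n\alpha w - \tfrac{D}{2}$ for all $\theta$ at once (the factor $1+\beta$ being precisely the source of the $(1+\beta)$ in $w_\pm$). This downward parabola in $w$ has roots $w_\pm$, so $M_{\alpha}(\theta)>0$ for every $\theta\in(m-w_+,\,m-w_-)$. To conclude $\wh{\theta}_{\alpha}\ge m-w_-$ I must ensure that the feasible range contains no point to the left of $m-w_-$ outside this positivity interval; since on \eqref{eq4.1} the feasible range starts at $\wt{\theta}_1-\delta_1\ge m-2\delta_1$, it suffices that $2\delta_1\le w_+$. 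This is exactly the standing hypothesis: unwinding $\varphi$ via $\tfrac{x}{\varphi(x)}=\tfrac{1+\sqrt{1-4x}}{2}$ at $x=\tfrac{(1+\beta)D}{4n}$ turns $4n\alpha\delta_1\le D\,\varphi\bigl(\tfrac{(1+\beta)D}{4n}\bigr)^{-1}$ into $2\delta_1\le w_+$. I expect this containment step to be the main obstacle, because the uniform bound only gives strict positivity on the \emph{open} interval, so the hypothesis must be used to place every feasible point strictly inside the positivity region; the left endpoint needs care. Combining the two bounds gives $\lvert m-\wh{\theta}_{\alpha}\rvert\le w_-$.

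Finally, the sufficient condition \eqref{eq1.1} follows from the hypothesis through the inequality $\varphi(x)\le\tfrac{x}{1-2x}$ recorded in \eqref{ancil}, which gives $D\,\varphi\bigl(\tfrac{(1+\beta)D}{4n}\bigr)^{-1}\ge\tfrac{4n}{1+\beta}\bigl(1-\tfrac{(1+\beta)D}{2n}\bigr)$, and demanding that this dominate $4n\alpha\delta_1$ rearranges into \eqref{eq1.1}. For the last display I would substitute $\alpha=\bigl(\tfrac{\beta^{-1}-2\log(\epsilon_2)}{nv_0}\bigr)^{1/2}$, so that $n\alpha^2 v_0=\beta^{-1}+2\log(\epsilon_2^{-1})$ and hence, by $v\le v_0$, $D\le 2\bigl(\beta^{-1}+2\log(\epsilon_2^{-1})\bigr)$; monotonicity of $\varphi$ then enlarges its argument to $\tfrac{(1+\beta)[\beta^{-1}+2\log(\epsilon_2^{-1})]}{2n}$ and replaces $\tfrac1\alpha$ by $\bigl(\tfrac{nv_0}{\beta^{-1}-2\log(\epsilon_2)}\bigr)^{1/2}$, producing the stated bound, while \eqref{eq4.6} and the accompanying exponential lower bound on $\epsilon_2$ are the corresponding specializations, again obtained via $\varphi(x)\le\tfrac{x}{1-2x}$.
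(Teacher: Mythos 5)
Your proposal is correct and follows essentially the same route as the paper's own proof: the upper bound comes from Proposition \ref{prop1.2} applied at a deterministic point at distance $\frac{2}{(1+\beta)\alpha}\varphi\bigl(\frac{(1+\beta)[n\alpha^2v+\beta^{-1}+2\log(\epsilon_2^{-1})]}{4n}\bigr)$ from $m$, the lower bound from the uniform Proposition \ref{prop2.1.1} converted into a quadratic dichotomy (the paper's elementary lemma on $\{x : x > ax^2 + c\}$) whose far branch is excluded by \eqref{eq4.1} together with the standing hypothesis rewritten as $2\delta_1 \le w_+$, and the specializations use the same facts $(1+\beta)\varphi(y)\le\varphi((1+\beta)y)$ and $\varphi(x)\le x/(1-2x)$. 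The left-endpoint subtlety you flag is genuine but is present in the paper as well, whose proof silently strengthens the non-strict hypothesis to the strict inequality $4n\alpha\delta_1 < [\cdots]\varphi(\cdots)^{-1}$, so your treatment is, if anything, the more explicit one.
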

\ 
\vspace{-6ex}

In the following plot, we took the same parameters as in the previous
section, and substituted only the last step. It shows some improvement,
especially for moderate confidence levels, but requires more involved 
computations.\\
\mbox{} \hfill \includegraphics{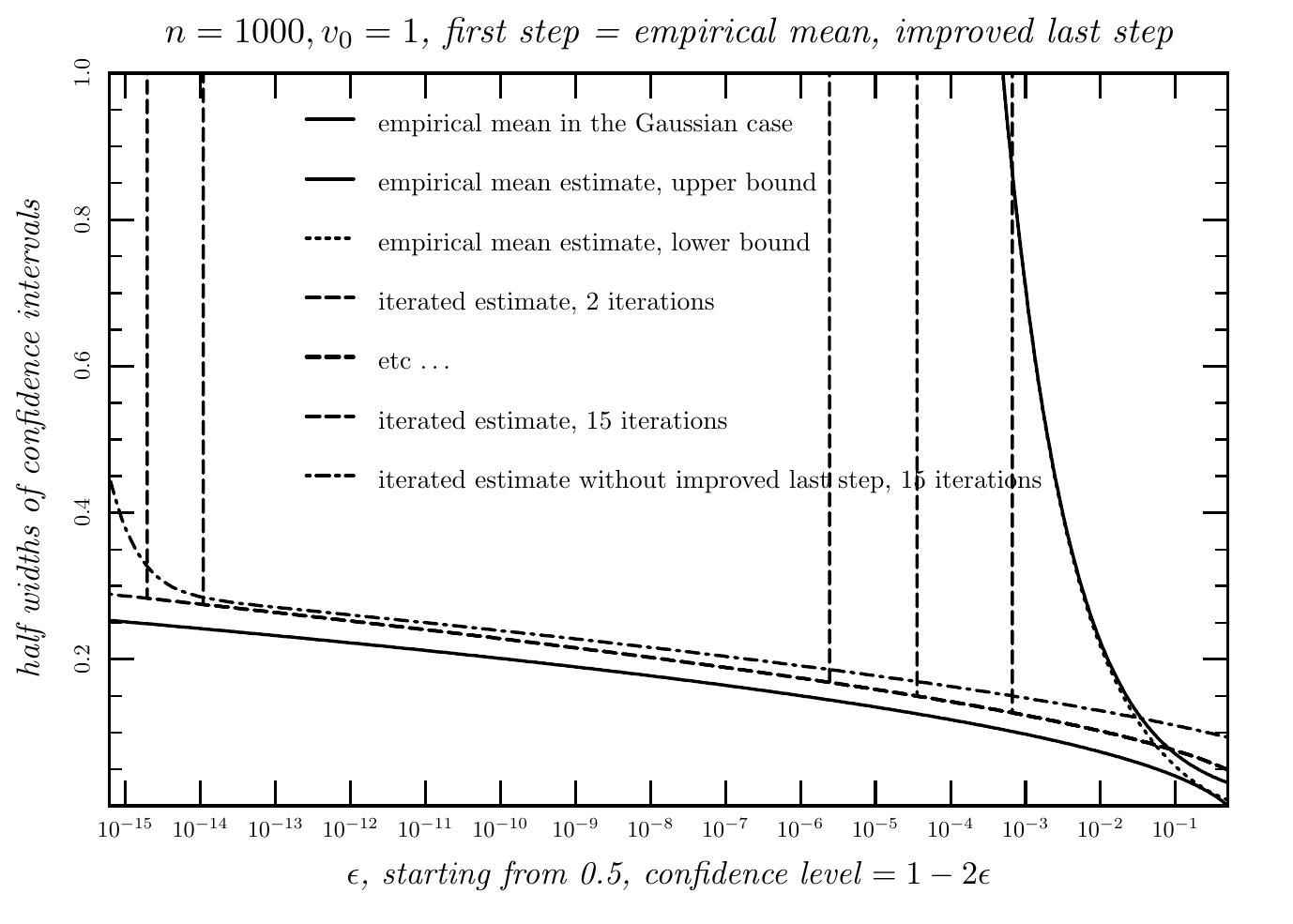}   
\hfill \mbox{}\\[-5ex]
(The vertical lines correspond to the confidence level after which condition 
\myeq{eq4.6} breaks.) This is what we obtain when we decrease $n$ to 
$300$,\\
\mbox{} \hfill \raisebox{-8ex}[0.85\height][0ex]{\includegraphics{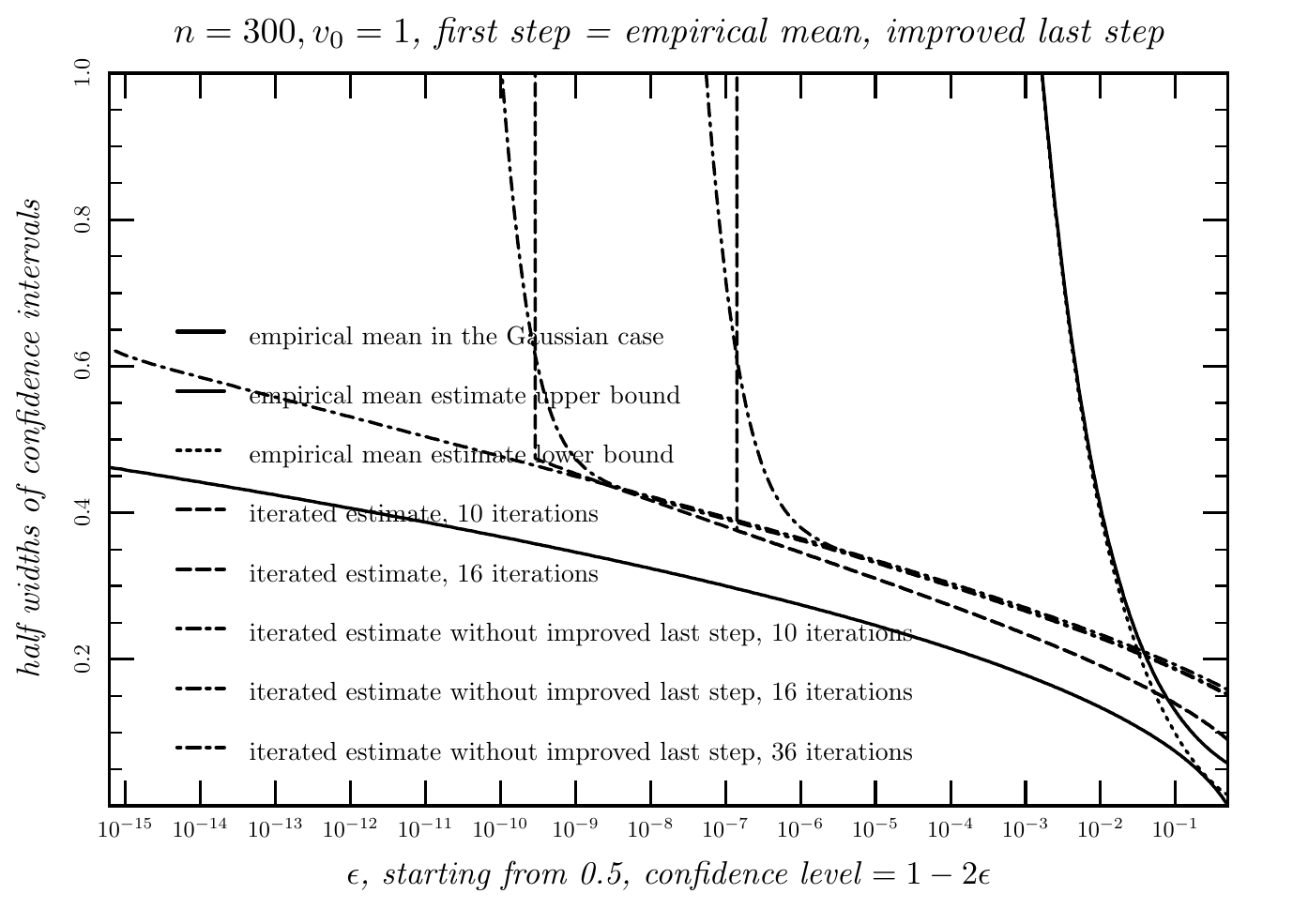}}   
\hfill \mbox{}\\
When the sample size is thus decreased, the last step improvement 
works up to $\epsilon \simeq 10^{-8}$, after which the iterated estimate
without last step improvement takes the lead, as shown on the previous plot. 

\section{Mean estimate from a kurtosis prior bound}

Situations where the variance is unknown are likely to happen.
It is possible to deal with them while making assumptions 
on the kurtosis. 

More precisely, let us introduce some {\em uniform kurtosis} coefficient,
that we define as 
$$
c = \sup_{\theta \in \B{R}} \frac{ \B{E} \bigl[ (Y-\theta)^4 \bigr]}{
\B{E} \bigl[ (Y - \theta)^2 \bigr]^2}.
$$
Its relation to the classical centered kurtosis 
$\ds \kappa = \frac{\B{E}\bigl[ (Y - m)^4 \bigr]}{\B{E} 
\bigl[ (Y - m)^2 \bigr]^2}$ is given by the following 
lemma.
\begin{lemma}
\label{lemma5.1}
The two kurtosis coefficients defined above satisfy  
the inequalities
$$
\kappa \leq c \leq 
\frac{1}{9}\biggl( \kappa^{1/2} + 2 \bigl( \kappa + 3 \bigr)^{1/2}
\biggr)^2 \leq \kappa + 2.
$$

On the other hand, if $\kappa_{\B{P}}$ and $c_{\B{P}}$ 
are the kurtosis and uniform kurtosis of the probability 
measure $\B{P}$ 
$$
\sup_{\B{P}} 
c_{\B{P}} - \kappa_{\B{P}} = 2,
$$
where the supremum is taken over all probability measures on the 
real line,
proving that the previous bound is tight in the worst case.

Anyhow, in the favourable case when 
the skewness is null, 
meaning that \linebreak  $\B{E} \bigl[ (Y-m)^3 \bigr] = 0$,
the two coefficients are equal whenever $\kappa \geq 3$, and 
more precisely
$$
c = 
\begin{cases}
\ds \kappa + \frac{(3-\kappa)^2}{5 - \kappa}, & 
\ds 1 \leq \kappa \leq 3,\\
\kappa, & \kappa \geq 3.
\end{cases}
$$ 
\end{lemma}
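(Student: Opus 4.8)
The plan is to turn every assertion into a one–variable optimization and to exploit the fact that both $\kappa$ and $c$ are invariant under affine maps $Y \mapsto aY+b$. First I would normalize so that $m=0$ and $v=1$, writing $\mu_k = \B{E}(Y^k)$, so that $\mu_2=1$ and $\mu_4=\kappa$. Putting $t=\theta$ and expanding $(Y-t)^2$ and $(Y-t)^4$ gives $\B{E}[(Y-\theta)^2]=1+t^2$ and $\B{E}[(Y-\theta)^4]=\kappa-4t\mu_3+6t^2+t^4$, so that $c=\sup_t f(t)$ with $f(t)=(\kappa-4t\mu_3+6t^2+t^4)/(1+t^2)^2$. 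The lower bound $\kappa\le c$ is then immediate by taking $t=0$, since $f(0)=\kappa$.

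For the upper bound I would recast the ratio in $L^p(\B{P})$ language: with $\|\cdot\|_p$ the $L^p$ norm, $f(t)=\bigl(\|Y-\theta\|_4/\|Y-\theta\|_2\bigr)^4$, hence $\sqrt{c}=\sup_t \|(Y-t)^2\|_2/(1+t^2)$. The key step is a carefully grouped triangle inequality: writing $(Y-t)^2=Y^2+(t^2-2tY)$ and applying Minkowski gives $\|(Y-t)^2\|_2\le \|Y^2\|_2+\|t^2-2tY\|_2=\sqrt{\kappa}+|t|\sqrt{4+t^2}$, using $\|Y^2\|_2=\sqrt{\mu_4}=\sqrt{\kappa}$ and $\B{E}[(t^2-2tY)^2]=t^4+4t^2$. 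Keeping the constant $t^2$ grouped with the linear term, rather than with $Y^2$, is precisely what makes the bound sharp enough; it then remains to maximize $\psi(t)=(\sqrt{\kappa}+|t|\sqrt{4+t^2})/(1+t^2)$ over $t\in\B{R}$.

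I expect this last maximization to be the main obstacle. The stationarity condition should reduce, after clearing the square root, to $2-t^2=|t|\sqrt{\kappa}\sqrt{4+t^2}$, and then to the biquadratic $(\kappa-1)t^4+4(\kappa+1)t^2-4=0$; solving for $t^2$ introduces $\sqrt{\kappa(\kappa+3)}$, which is exactly where the term $\kappa+3$ enters. Rather than substituting the messy root directly into $\psi$, the clean route is to use the stationarity relation to rewrite $\psi=(\kappa+2-t^2)/\bigl(\sqrt{\kappa}(1+t^2)\bigr)$ and then verify by cross–multiplication the identity $\psi=\tfrac13(\sqrt{\kappa}+2\sqrt{\kappa+3})$, which boils down to $3(\sqrt{\kappa(\kappa+3)}-2)=(\sqrt{\kappa}+2\sqrt{\kappa+3})(2\sqrt{\kappa}-\sqrt{\kappa+3})$, both sides equal to $3\sqrt{\kappa(\kappa+3)}-6$. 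Squaring yields $c\le\tfrac19(\sqrt{\kappa}+2\sqrt{\kappa+3})^2$, and the final inequality $\tfrac19(\sqrt{\kappa}+2\sqrt{\kappa+3})^2\le\kappa+2$ reduces, after expanding and isolating the cross term, to $2\sqrt{\kappa(\kappa+3)}\le 2\kappa+3$, i.e. to $0\le 9$.

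For $\sup_{\B{P}}(c_{\B{P}}-\kappa_{\B{P}})=2$, the inequality $\le 2$ is what was just proved; for the matching lower bound I would test two–point laws, which by affine invariance are parametrized by a single weight $p\in(0,1/2]$. A direct moment computation gives $\kappa=(1-3p+3p^2)/(p(1-p))$, while $\sup_\theta$ is attained as $\theta$ tends to the lighter atom, giving $c=1/p$; hence $c-\kappa=(2-3p)/(1-p)\to 2$ as $p\to 0$. The one point needing justification here is that for a two–point law the ratio $\B{E}[(Y-\theta)^4]/\B{E}[(Y-\theta)^2]^2$ is maximized by placing $\theta$ at an atom (equivalently, that the origin–kurtosis of a two–point distribution is largest when one atom sits at the origin). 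Finally, for the null–skewness formula I would set $\mu_3=0$, so $f(t)=(\kappa+6t^2+t^4)/(1+t^2)^2$ depends on $\kappa$ alone, and differentiate: the stationary points solve $t\,(2t^2+\kappa-3)=0$. When $\kappa\ge 3$ the only real critical point is $t=0$, giving $c=\kappa$; when $1\le\kappa\le 3$ there is in addition $t^2=(3-\kappa)/2$, at which $f$ evaluates to $(9-\kappa)/(5-\kappa)=\kappa+(3-\kappa)^2/(5-\kappa)$, and since $(9-\kappa)/(5-\kappa)-\kappa=(\kappa-3)^2/(5-\kappa)\ge 0$ this interior value dominates $f(0)=\kappa$ and is the supremum, giving exactly the stated case distinction.
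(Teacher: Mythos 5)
Your proposal is correct and follows essentially the same route as the paper: your grouped Minkowski step $(Y-t)^2 = Y^2 + t(t-2Y)$ is exactly the paper's decomposition $Y^2 = (Y-m)^2 + m(2Y-m)$ in mirrored coordinates (you normalize $m=0$, the paper normalizes $\theta=0$), your objective $\psi(t)$ coincides with the paper's $\kappa^{1/2}(1-y)+\sqrt{y(4-3y)}$ under the substitution $y = t^2/(1+t^2)$, and your two-point laws and null-skewness computation are the paper's Bernoulli example and its quadratic-in-$y$ optimization. One small correction to the tightness step: for a two-point law with masses $1-p$ and $p$, the ratio $\B{E}\bigl[(Y-\theta)^4\bigr]/\B{E}\bigl[(Y-\theta)^2\bigr]^2$ evaluated at an atom equals one over the mass of the \emph{other} atom, so the value $1/p$ comes from placing $\theta$ at the \emph{heavier} atom, not the lighter one; moreover the attainment question you flag as needing justification is irrelevant, since $c$ is defined as a supremum, so $c \geq 1/p$ is automatic and, combined with the already proved $c - \kappa \leq 2$, yields $\sup_{\B{P}} (c_{\B{P}} - \kappa_{\B{P}}) = 2$ without knowing where (or whether) the supremum over $\theta$ is attained.
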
 

Let us consider for any $\theta_0 \in \B{R}$
and $\delta \in )0,1)$ the estimator of the mean 
$\wh{\theta}_{\alpha}(\theta_0)$ 
already considered in previous sections 
and defined by equation \myeq{eq1.1.2}.
Let us also consider the increasing mapping $\alpha \in \B{R}_+ 
\mapsto Q_{\theta, \delta}(\alpha)$ defined as
$$
Q_{\theta,\delta}(\alpha) = \frac{1}{n} \sum_{i=1}^n \log \biggl\{ 
1 + \alpha(Y_i - \theta)^2 - \delta + \frac{1}{2} \Bigl[ 
\alpha(Y_i - \theta)^2 - \delta \Bigr]^2 \biggr\}.
$$
We will use the ancillary function 
$\ds
h(a,y) = \frac{\ds 4y}{\ds (1+y) \Biggl\{ 1 + 
\sqrt{1 - \frac{4ay^2}{(1+y)^2}}\Biggr\}}$.

The next proposition is concerned with random confidence
intervals, whose lengths are defined with the help of some 
estimator of the variance. 
More precisely, we are going to iterate a process where we successively 
estimate $v + (m-\theta)^2$ and $m$. 

\begin{prop}
\label{prop5.2}
Let us choose positive real constants $x_i$,  
and confidence levels $\epsilon_i$, $i = 1, \dots, 
2 k$. Let $U_i$, $i = 2, \dots, 2 k$ be uniform real random 
variables in the interval $(-1, +1)$, independent of each 
other and of everything else. Let us start with some prior 
guess $\theta_1$ for $m$ and let us define by induction the
sequence of values  
\begin{align*}
\wt{\theta}_1 & = \theta_1,\\
\delta_1 & = \sqrt{\frac{2 \log(\epsilon_1^{-1})}{(c-1) n}},\\
\zeta_1 & = - \frac{1}{2} \log \Bigl\{ 1 - h \Bigl[ \tfrac{c}{c-1}, 
(c-1) \delta_1 \Bigr] \Bigr\},\\
\wt{q}_1 & = \frac{\delta_1 \exp( - \zeta_1)}{Q_{\theta_1,\delta_1}^{-1}
\bigl[ -(c-1) \delta_1^2 \bigr]},\\
\wt{q}_2 & = \wt{q}_1 \exp( x_2 \zeta_1 U_2),\\
\gamma_2 & = \log(1 + x_2^{-1}),\\
\alpha_2 & = \exp \biggl[ - \frac{(1+x_2)\zeta_1}{2} \biggr] \sqrt{\frac{2 \bigl[ \log(\epsilon_2^{-1}) + \gamma_2 \bigr]}{
n \wt{q}_2}},\\
\zeta_2 & = \exp \biggl[ \frac{(1+x_2) \zeta_1}{2} \biggr] 
\sqrt{\frac{2 \wt{q}_2 \bigl[ \log(\epsilon_2^{-1}) + \gamma_2 \bigr]}{n}}, \\
\wt{\theta}_2 & = \wh{\theta}_{\alpha_2}(\theta_1),\\
& \vdots \\
\gamma_{2i - 1} & = \gamma_{2i - 2} + \log (1 + x_{2i-1}^{-1}),\\
\delta_{2i-1} & = \sqrt{\frac{2 \bigl[ \log(\epsilon_{2i-1}^{-1}) + 
\gamma_{2i-1} \bigr]}{(c-1) n}},\\
\wt{\theta}_{2i-1} & = \wt{\theta}_{2i - 2} + \zeta_{2i-2} x_{2i-1} U_{2i-1},\\ 
\zeta_{2i-1} & = - \frac{1}{2} \log \Bigl\{ 1 - h 
\Bigl[ \frac{c}{c-1}, (c-1) \delta_{2i-1} \Bigr] \Bigr\},\\
\wt{q}_{2i-1} & = \frac{\delta_{2i-1} \exp( - \zeta_{2i-1})}{
Q_{\wt{\theta}_{2i-1}, \delta_{2i-1}}^{-1} \bigl[ 
- (c-1) \delta_{2i-1}^2 \bigr]},\\
\wt{q}_{2i} & = \wt{q}_{2i-1} \exp( x_{2i} \zeta_{2i-1} U_{2i}),\\
\gamma_{2i} & = \gamma_{2i-1} + \log(1 + x_{2i}^{-1}),\\
\alpha_{2i} & = \exp \biggl[ -
\frac{(1 + x_{2i}) \zeta_{2i-1}}{2} \biggr] \sqrt{\frac{2 \bigl[ \log(\epsilon_{2i}^{-1}) + \gamma_{2i} 
\bigr]}{n \wt{q}_{2i}}},\\
\zeta_{2i} & = \exp \biggl[ \frac{(1 + x_{2i}) \zeta_{2i-1}}{2} 
\biggr] 
\sqrt{ \frac{2 \wt{q}_{2i} \bigl[ \log(\epsilon_{2i}^{-1}) 
+ \gamma_{2i} \bigr]}{n}}, \\
\wt{\theta}_{2i} & = \wh{\theta}_{\alpha_{2i}}(\wt{\theta}_{2i-1}),\\
& \vdots
\end{align*}
Let us remark that $\ds \gamma_i = \sum_{j=2}^i \log( 1 + x_j^{-1})$,
$\delta_{2i-1}$, and $\zeta_{2i-1}$ $i = 1, \dots, k$ are non random 
and known prior to observing the sample.\\
Let us assume that
$\ds
\max_{i=1, \dots, k} \delta_{2i-1} \leq \frac{1}{2\sqrt{c(c-1)} 
- (c-1)}.
$\\[1ex]
With probability at least $1 - 2 \sum_{i=1}^{2k} \epsilon_i$, 
for any $i=1, \dots, k$, 
\begin{gather*}
\lvert m - \wt{\theta}_{2i} \rvert \leq 
\zeta_{2i},\\ 
\bigl\lvert \log(\wt{q}_{2i-1}) - \log \bigl[ v 
+ (m - \wt{\theta}_{2i-1})^2 \bigr]  \bigr\rvert 
\leq \zeta_{2i-1}.
\end{gather*}
As a consequence, on the same event of probability 
at least $1 - 2 \sum_{i=1}^{2k} \epsilon_i$,  for 
any $i=2, \dots, k$, 
\begin{gather*} 
\bigl(m - \wt{\theta}_{2i-1} \bigr)^2 \leq 
(1  + x_{2i-1})^2 \zeta_{2i-2}^2,\\ 
\wt{q}_{2i-1} \leq \bigl[ v + (1 + x_{2i-1})^2 \zeta_{2i-2}^2 
\bigr] \exp( \zeta_{2i-1}),\\
\zeta_{2i} \leq \exp \bigl[ (1 + x_{2i}) \zeta_{2i-1} \bigr] 
\sqrt{\frac{2 \bigl[ v + (1+x_{2i-1})^2 \zeta_{2i-2}^2 \bigr] 
\bigl[ \log(\epsilon_{2i}^{-1}) + \gamma_{2i} \bigr]}{n}}, 
\\
\zeta_{2} \leq \exp \bigl[ (1 + x_2) \zeta_1 \bigr] \sqrt{ 
\frac{2 \bigl[ v + (m - \theta_1)^2 \bigr] \bigl[ \log(\epsilon_{2}^{-1}) 
+ \gamma_2 \bigr]}{n}}, \\
\exp( - \zeta_{2i-1}) \wt{q}_{2i-1} - (1 + x_{2i-1})^2 \zeta_{2i-2}^2 
\leq v \leq \exp(\zeta_{2i-1}) \wt{q}_{2i-1}.
\end{gather*}
These equations allow to compute by induction a 
(non observable) deterministic bound for $\zeta_{2k}$, 
which is itself a random observable confidence interval half width 
for the estimate of the mean given by $\wt{\theta}_{2k}$. 
The last equation (better used with $i = k$) shows that 
we get as a by-product an estimate
of the variance with observable
as well as theoretical confidence bounds.
\end{prop}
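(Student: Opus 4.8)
The plan is to reduce the whole scheme to two elementary fixed-parameter exponential inequalities, one governing the even (mean) steps and one governing the odd (variance) steps, and then to glue them together by induction closed with a union bound. First I would isolate the building block for the mean steps. For a fixed center $\theta_0$ and fixed $\alpha$, Proposition \ref{prop3.2} already controls $|m - \wh{\theta}_{\alpha}(\theta_0)|$ in terms of $\alpha$, $\log(\epsilon^{-1})$ and the prior bounds $v_0,\delta_0$; the only refinement I need is the observation that the quantity actually entering its proof is the second moment $s \eqdef \E[(Y-\theta_0)^2] = v + (m-\theta_0)^2$ rather than the cruder majorant $v_0 + \delta_0^2$. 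This is precisely what $\wt{q}_{2i}$ is built to estimate, so after substituting $\alpha_{2i}$ I obtain a bound of the form $\sqrt{2 s\,[\log(\epsilon_{2i}^{-1})+\gamma_{2i}]/n}$, up to the multiplicative correction $\exp[(1+x_{2i})\zeta_{2i-1}/2]$ that converts the estimate $\wt{q}_{2i}$ into the true $s$.

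Second I would establish the building block for the variance steps, the genuinely new ingredient since no separate statement for $Q_{\theta,\delta}$ appears above. Here I would exploit that the summand $\log\{1 + u + u^2/2\}$ with $u = \alpha(Y_i-\theta)^2 - \delta$ carries the same positive $1 + u + u^2/2 > 0$ structure as the truncation function $T$, so that $\E\exp(nQ_{\theta,\delta}(\alpha)) = \prod_i \E[1 + u_i + u_i^2/2]$ factorizes; expanding the factors gives $1 + (\alpha s - \delta) + \frac12\E[(\alpha(Y-\theta)^2-\delta)^2]$, whose second-order term is controlled exactly by the uniform-kurtosis hypothesis $\E[(Y-\theta)^4] \le c\,s^2$, while the lower tail follows from $(1+u+u^2/2)^{-1} \le 1 - u + u^2/2$. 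Markov's inequality then pins $Q_{\theta,\delta}(\alpha)$ between two explicit quadratics in $\alpha s$; inverting the monotone map $\alpha \mapsto Q_{\theta,\delta}(\alpha)$ at the level $-(c-1)\delta^2$ turns this two-sided control into the multiplicative estimate $|\log \wt{q}_{2i-1} - \log s| \le \zeta_{2i-1}$, with $\zeta_{2i-1}$ read off from the ancillary function $h$. The admissibility condition $\max_i \delta_{2i-1} \le (2\sqrt{c(c-1)} - (c-1))^{-1}$ is exactly the requirement that the discriminant $1 - 4ay^2/(1+y)^2$ under the square root in $h$, with $a = c/(c-1)$ and $y = (c-1)\delta$, stay nonnegative, so that the inversion is well defined.

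Both building blocks demand fixed, non-random parameters, whereas in the iteration $\alpha_{2i}$ depends on the data through $\wt{q}_{2i}$ and the variance step is evaluated at the data-dependent center $\wt{\theta}_{2i-1}$. The device that legitimizes this is the uniform randomization by the $U_i$: perturbing $\log \wt{q}$ by $x_{2i}\zeta_{2i-1}U_{2i}$ and the center by $\zeta_{2i-2}x_{2i-1}U_{2i-1}$ amounts to replacing each point estimator by a uniform posterior of relative half-width $x_i$, and integrating the fixed-parameter exponential bound against this posterior — a PAC-Bayesian step, as in Propositions \ref{prop2.1.1} and \ref{prop1.4} — produces precisely the additive entropy penalty $\log(1 + x_i^{-1})$ accumulated in $\gamma_i$. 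The factors $\exp[\pm(1+x_i)\zeta/2]$ and the $(1+x_i)$ widenings then absorb the gap between the randomized evaluation point and the deterministic targets $m$ and $s$.

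Finally I would run the induction on $i$. Starting from $\wt{\theta}_1 = \theta_1$ and the base values $\delta_1,\zeta_1,\wt{q}_1$, each odd step yields $|\log\wt{q}_{2i-1} - \log s_{2i-1}| \le \zeta_{2i-1}$ with $s_{2i-1} = v + (m-\wt{\theta}_{2i-1})^2$, and each even step yields $|m - \wt{\theta}_{2i}| \le \zeta_{2i}$; the two feed each other through $(m - \wt{\theta}_{2i-1})^2 \le (1+x_{2i-1})^2\zeta_{2i-2}^2$, which follows from $|m - \wt{\theta}_{2i-2}|\le\zeta_{2i-2}$ and the perturbation radius, and then bounds $s_{2i-1}$ and hence $\wt{q}_{2i-1}$. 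A union bound over the $2k$ applications, each failing with probability at most $2\epsilon_i$, delivers the global event of probability at least $1 - 2\sum_{i=1}^{2k}\epsilon_i$, and the listed consequences follow by substituting the high-probability bounds into the definitions of $\zeta_{2i}$ and $\wt{q}_{2i-1}$. I expect the main obstacle to be the variance step: obtaining the sharp two-sided inversion of $Q_{\theta,\delta}$, matching the width $\zeta_{2i-1}$ to $h$, and propagating the randomization penalty without degrading the multiplicative error are the delicate points, whereas the mean step and the final union bound are comparatively routine given the earlier propositions.
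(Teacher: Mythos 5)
Your two building blocks are exactly the paper's: the even steps rest on the exponential moment of the truncated sum with the true second moment $v+(m-\theta)^2$, and the odd steps rest on two-sided exponential moment bounds for $Q_{\theta,\delta}$, the uniform kurtosis hypothesis to control the quadratic term, monotonicity of $\alpha\mapsto Q_{\theta,\delta}(\alpha)$, and the root computation that produces $h$ (your reading of the admissibility condition on $\delta_{2i-1}$ as reality of the roots is precisely the paper's). The entropy accounting $\gamma_i=\sum_{j\le i}\log(1+x_j^{-1})$ is also right. The gap is in the glue. Your induction claims each of the $2k$ steps ``fails with probability at most $2\epsilon_i$'', each failure bound obtained by integrating a fixed-parameter exponential inequality against the uniform posterior and paying $\log(1+x_i^{-1})$. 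But that PAC-Bayesian step requires the conditional law $\rho$ of the randomized evaluation point given the sample to have density at most $1+x_i^{-1}$ with respect to a prior $\pi$ that does not depend on the sample. The posterior at step $2i-1$ is uniform on an interval centered at the random point $\wt\theta_{2i-2}$, and it is contained in the prior interval centered at $m$ of half-width $(1+x_{2i-1})\zeta_{2i-2}$ only on the event $\lvert m-\wt\theta_{2i-2}\rvert\le\zeta_{2i-2}$, i.e.\ only when the previous steps succeeded. On the complementary event the density ratio is unbounded and the Fubini--Chebyshev computation yields nothing; nor can you repair this by conditioning on the good event, since that event involves the sample and conditioning destroys the i.i.d.\ product structure on which the exponential moment identities rest. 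As formulated, your per-step probability statement is therefore circular: it presupposes the localization it is meant to establish.

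The paper breaks this circularity with a coupling device absent from your sketch: it defines a modified, non-observable sequence $\ov\theta_i,\ov q_i$ in which any value violating its target inequality is replaced by the true quantity ($m$, respectively $v+(m-\ov\theta_{2i-1})^2$). For this modified sequence the localization holds surely, by construction, so its conditional law given the sample is dominated (with factor $\exp(\gamma_j)$, proved by induction on $j$) by a recursively built, sample-independent prior, and all the exponential-moment arguments you describe apply unconditionally to it. Two lemmas then show that at each step, with probability at least $1-2\epsilon_i$, the case-modification does not trigger, i.e.\ the ``natural'' formula already satisfies its bound; a union bound over all steps gives that with probability at least $1-2\sum_{i=1}^{2k}\epsilon_i$ the modified and actual sequences coincide, which transfers the sure inequalities of the modified sequence to $\wt\theta_i,\wt q_i$. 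Without this construction (or an equivalent insertion of indicators of the good event inside the expectations, carried along the whole chain of density bounds), your induction does not close; with it, the rest of your outline matches the paper's proof step for step.
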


In the sequel, we will give lower and upper bounds
for the worst case behaviour of the empirical mean depending on the variance
and the kurtosis. Note that here we do better, since we 
also estimate the variance and assume only a known prior bound 
on the kurtosis. Obtaining a similar observable confidence interval for 
the empirical mean would require to estimate the variance 
under a kurtosis bound, which is not something straightforward,
as will also be discussed a little later. In the following 
plot, we chose a sample of size $2000$, a size where things
start to behave nicely under these assumptions. 

Although the lower and upper deviation bounds shown for the empirical 
mean estimator do not correspond to observable confidence 
intervals, we see that for confidence levels higher 
than $1 - 2\cdot10^{-8}$, 
the observable confidence interval of our estimator 
outperforms the deviations of the empirical mean, 
up to confidence levels as high as $1 - 2 \cdot 10^{-14}$.
We also plotted the upper estimate for the
standard deviation (assumed to be equal to one).
We took $x_i = 0.5$, $i < 2k$, $x_{2k} = 0.1$ and assumed 
that $\lvert m - \theta_1 \rvert \leq 10 \sqrt{v}$.  
We kept the kurtosis to $3$, the kurtosis of the Gaussian 
distribution.
\\
\mbox{} \hfill \includegraphics{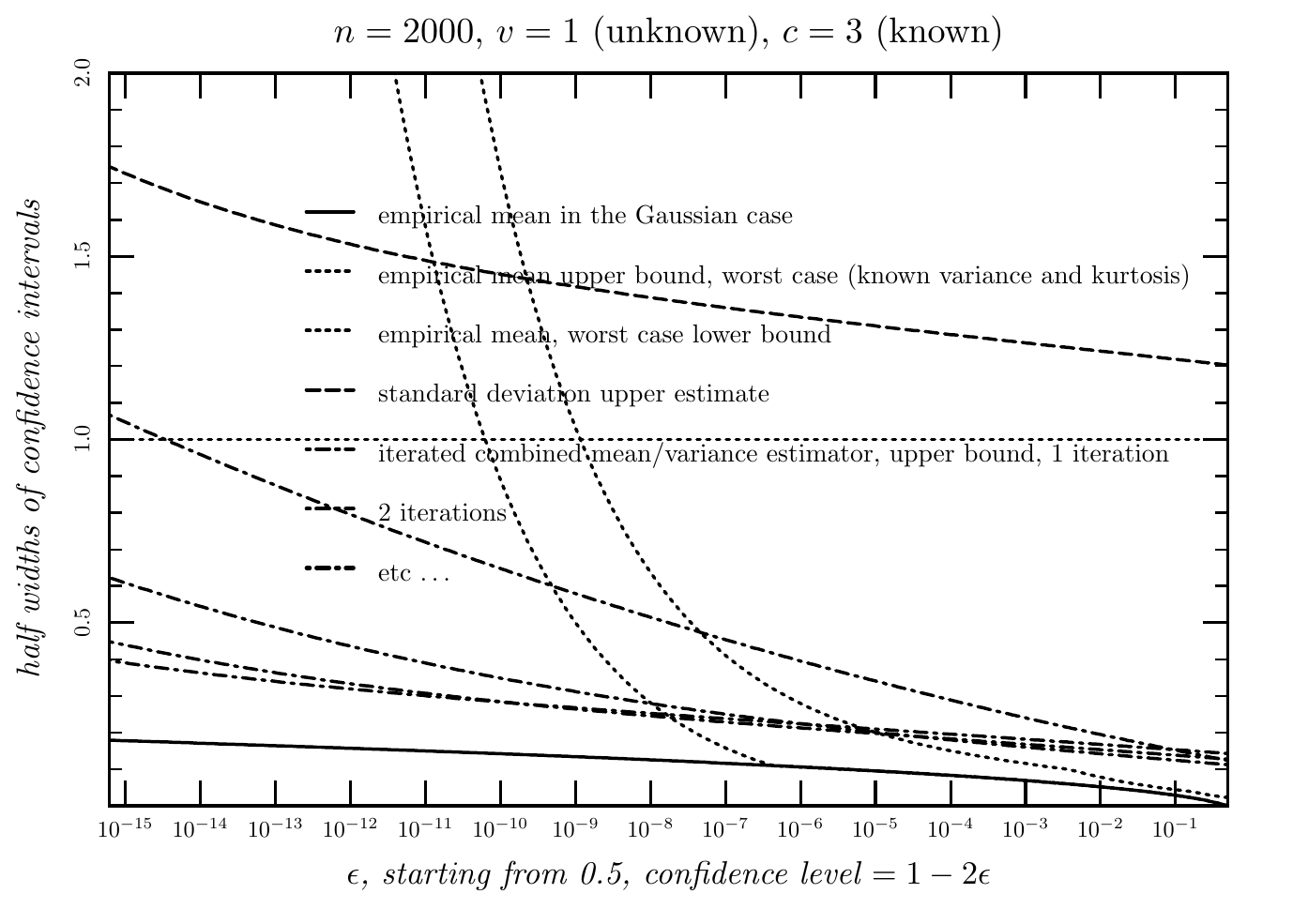}  
\hfill \mbox{}\\[-4ex]
This is now what happens when we increase $c$ to $6$ (
taking this time $x_{i} \equiv 0.1$).\\[-1ex]
\mbox{} \hfill \raisebox{-2ex}[\height][0pt]{\includegraphics{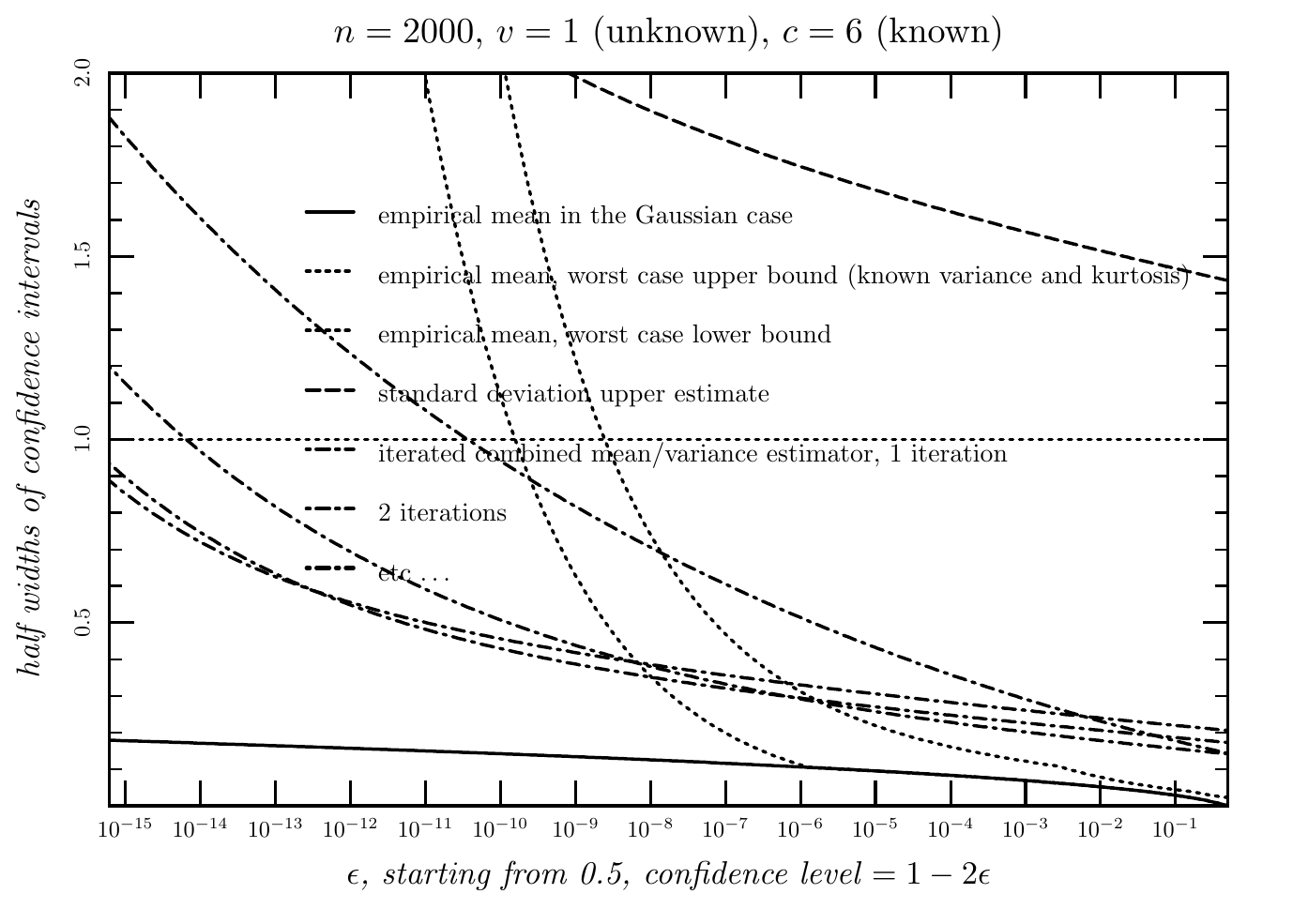}}  
\hfill \mbox{}\\
When we increase the sample size to $n = 5000$, this makes things easier:\\[-2ex]
\mbox{} \hfill \raisebox{-2ex}[\height][0pt]{\includegraphics{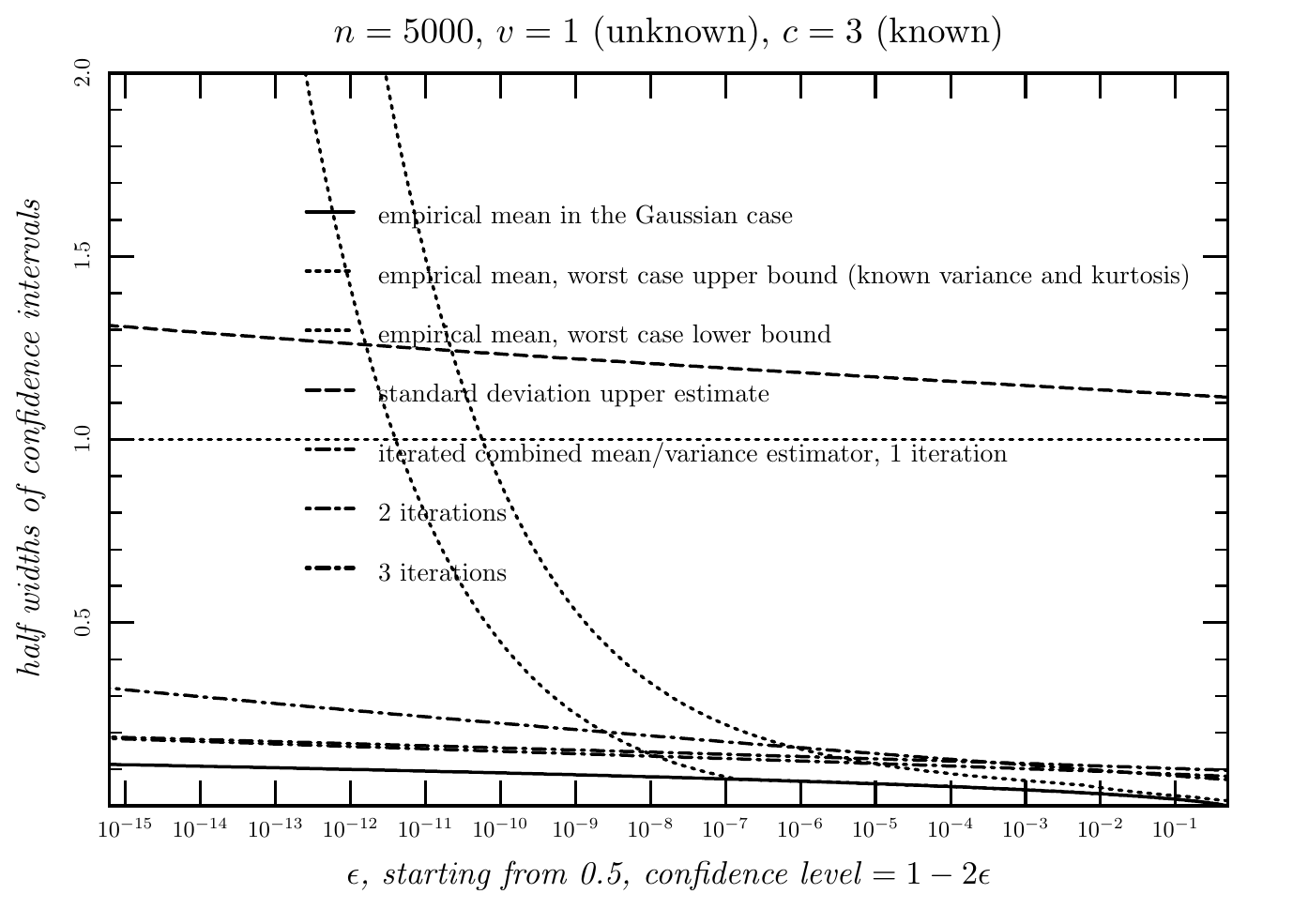}}  
\hfill \mbox{}\\[-2ex]
This is the influence of the kurtosis on the bounds for a sample 
of size $n = 5000$:\\
\mbox{} \hfill \raisebox{-7ex}[0.9\height][0pt]{\includegraphics{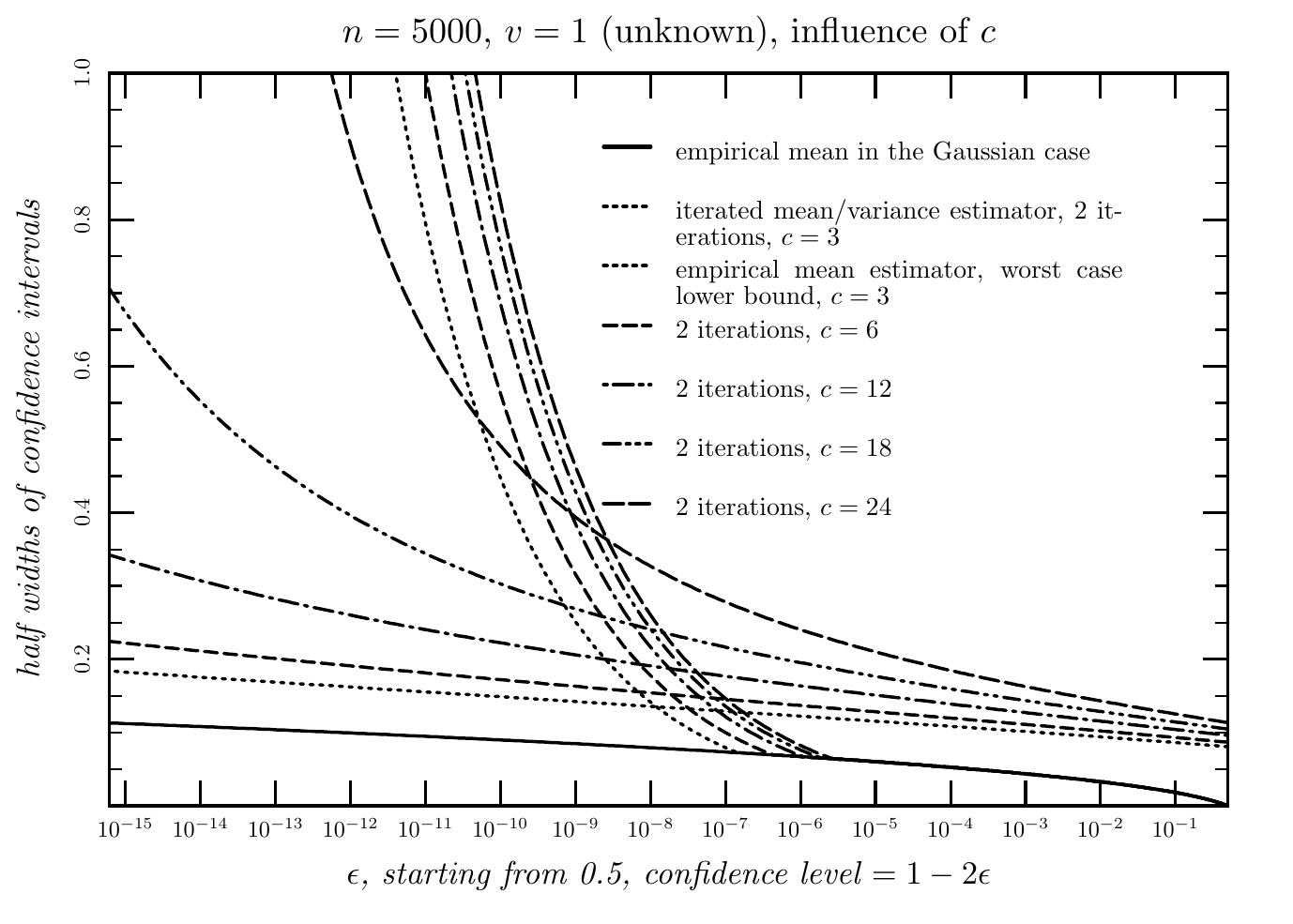}}  
\hfill \mbox{}
\section[Adapting to an unknown variance]{Adapting to an unknown variance when the kurtosis is 
unknown or even infinite}

In this section, we will point out that Lepski's 
renowned adaptation method \cite{Lepski2} can be put to good use
when nothing is known, neither the variance (still 
assumed to be finite) nor the kurtosis (not even assumed 
to be finite !). Of course, under so uncertain, (but unfortunately
so frequent) circumstances, it is not possible to
provide any {\em observable} confidence level. Nevertheless, 
it is still possible to adapt to the variance and to give  
deviation bounds depending on the unknown variance.
Here, a clear distinction should be made between {\em adapting} 
to the variance and {\em estimating} the variance : 
estimating the variance at any predictable rate is impossible 
in this context where we do not assume any higher moment to be 
bounded.

The idea of Lepski's method is powerful and simple : consider
a sequence of confidence intervals obtained by assuming a variance
bound $v_0$ to take a range of possible values and pick up as an estimator
the middle of the smallest interval intersecting all the larger 
ones. For this to be legitimate, we need all the confidence regions
for which the variance bound is valid to hold together, which 
is performed using a union bound.

Let us describe this idea more precisely. Let $\wh{\theta}(v_0)$ 
be some estimator of the mean depending on some assumed variance bound 
$v_0$, as the ones described in the beginning of this paper. 
Let $\delta(v_0, \epsilon) \in \B{R}_+ \cup \{ + \infty \}$  
be the corresponding confidence bound : 
namely let us assume that with probability at least $1 - 2 \epsilon$, 
$$
\lvert m -  \wh{\theta}(v_0) \rvert \leq \delta(v_0, \epsilon).
$$
Presumably, except for distributions with bounded support, 
$\delta(v_0, 0) = + \infty$.

Let $\nu \in \C{M}_+^1(\B{R}_+)$ be some coding atomic sub-probability 
measure on the positive real line, which will serve to take 
a union bound on a (countable) set of possible values of $v_0$.

We can choose for instance for $\nu$ the following coding distribution : 
expressing $v_0$ by comparison with some reference value 
$V$ 
$$
v_0 = V 2^s \sum_{k=0}^{d} c_k 2^{-k}, \quad
s \in \B{Z}, d \in \B{N}, (c_k)_{k=0}^d \in \{0,1\}^{d+1},
c_0 = c_d = 1,
$$
we set $\nu(v_0) = 
\bigl[ (\lvert s \rvert + 2) (\rvert s \rvert + 3)
(d+1)(d+2) 2^{d-1} \bigr]^{-1}$, and otherwise we set $\nu(v_0) = 0$. 
It is easy to see that this defines a subprobability distribution on
$\B{R}_+$ (supported by dyadic numbers scaled by the factor V). 
It is clear that, as far as possible, the reference value $V$ should
be chosen as close as possible to the true variance $v$.  

Let us consider for any $v_0$ such that $\delta(v_0, \epsilon \nu(v_0))
< + \infty$ the confidence interval 
$$
I(v_0) = \wh{\theta}(v_0) + \delta \bigl[ v_0, \epsilon \nu(v_0) 
\bigr] \times (-1,1).
$$
Let us put $I(v_0) = \B{R}$ when $\delta(v_0, \epsilon \nu(v_0)) = + 
\infty$. 

Let us consider the non-decreasing family of closed intervals
$$
J(v_1) = \bigcap \Bigl\{ I(v_0) : v_0 \geq v_1 \Bigr\}, \qquad v_1 \in \B{R}_+.
$$
A union bound shows immediately that
with probability at least $1 - 2 \epsilon$, $m \in J(v)$, 
implying as a consequence that $J(v) \neq \varnothing$.

\begin{prop}
Since $v_1 \mapsto J(v_1)$ is a non decreasing family of closed intervals, 
the intersection 
$$
\bigcap \Bigl\{ J(v_1) : v_1 \in \B{R}_+, J(v_1) \neq \varnothing \Bigr\}
$$
is a non empty closed interval, and we can therefore pick up an adaptive 
estimator $\wt{\theta}$ belonging to it, choosing for instance 
the middle of this interval.

With probability at least $1 - 2 \epsilon$, $ m \in J(v)$, which implies
that $J(v) \neq \varnothing$, and therefore that $\wt{\theta} 
\in J(v)$. 

Thus with probability at least $1 - 2 \epsilon$
$$
\lvert m - \wt{\theta} \rvert \leq \lvert J(v) \rvert 
\leq 2 \inf_{v_0 > v}  \delta(v_0, \epsilon \nu(v_0)). 
$$ 
If the confidence bound $\delta(v_0, \epsilon)$ is 
homogeneous, in the sense that 
$$
\delta(v_0, \epsilon) = \delta(1,\epsilon) \sqrt{v_0}, 
$$
as it is the case in Proposition \thmref{prop3.1.2} and 
Proposition \thmref{prop1.4} when used in conjunction 
with Proposition \ref{prop3.1.2}, then with probability at least 
$1 - 2 \epsilon$, 
$$
\lvert m - \wt{\theta} \rvert \leq 2 \inf_{v_0 > v}  
\delta(1,\epsilon \nu(v_0)) \sqrt{v_0}.
$$
\end{prop}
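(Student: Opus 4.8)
The plan is to read the result off the monotone structure of the family $\bigl(J(v_1)\bigr)_{v_1 \in \B{R}_+}$ together with a single union bound over the countable support of $\nu$. Throughout I treat the confidence sets as the \emph{closed} intervals $I(v_0) = \wh{\theta}(v_0) + \delta[v_0, \epsilon \nu(v_0)] \times [-1,1]$ (the confidence statement being an inequality $\leq$), which both matches the assertion that the $J(v_1)$ are closed and makes the compactness argument below clean; non-atoms $v_0$ contribute $I(v_0) = \B{R}$ and may be discarded, so each intersection ranges effectively over the countable set of atoms.

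First I would settle the deterministic structural claim. Since $v_1 \leq v_1'$ forces $\{v_0 \geq v_1\} \supseteq \{v_0 \geq v_1'\}$, the intersection $J(v_1) = \bigcap\{I(v_0) : v_0 \geq v_1\}$ can only grow with $v_1$, so the family is non-decreasing, and each $J(v_1)$, being an intersection of closed intervals, is itself a closed interval. As $v_1$ decreases, the non-empty members $J(v_1)$ form a nested decreasing family of non-empty closed intervals, each contained in any finite-width $I(v_0)$ and hence bounded; Cantor's nested-interval theorem then shows that $\bigcap\{J(v_1) : J(v_1) \neq \varnothing\}$ is a non-empty closed interval. I define $\wt{\theta}$ to be its midpoint; crucially this depends only on the observable $I(v_0)$ and on $\nu$, never on the unknown variance $v$, so $\wt{\theta}$ is a bona fide estimator.

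Next I would run the union bound. For every atom $v_0$ of $\nu$ with $v_0 \geq v$ the prior bound $v \leq v_0$ is valid, so applying the hypothesised confidence bound with $\epsilon$ replaced by $\epsilon \nu(v_0)$ gives $\B{P}\bigl(m \notin I(v_0)\bigr) \leq 2 \epsilon \nu(v_0)$. Summing over the countably many atoms $v_0 \geq v$ and using $\sum_{v_0} \nu(v_0) \leq 1$, I get that with probability at least $1 - 2\epsilon$ one has $m \in I(v_0)$ simultaneously for all $v_0 \geq v$, i.e. $m \in J(v)$; in particular $J(v) \neq \varnothing$ on this event.

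Finally I would combine the two ingredients. On the good event $J(v)$ is one of the non-empty members of the family, so $\wt{\theta} \in \bigcap\{J(v_1) : J(v_1) \neq \varnothing\} \subseteq J(v)$; thus $m$ and $\wt{\theta}$ both lie in $J(v)$ and $\lvert m - \wt{\theta} \rvert \leq \lvert J(v) \rvert$. Since $J(v) \subseteq I(v_0)$ for every $v_0 \geq v$, its length is at most $\lvert I(v_0) \rvert = 2\delta(v_0, \epsilon \nu(v_0))$, and taking the infimum over $v_0 > v$ yields the stated bound; the homogeneous case is immediate on substituting $\delta(v_0, \epsilon \nu(v_0)) = \delta(1, \epsilon \nu(v_0)) \sqrt{v_0}$. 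I expect the only genuinely delicate point to be the structural claim that the big intersection is non-empty: this is exactly where closedness and boundedness of the $J(v_1)$ enter, and where one must verify that the selected $\wt{\theta}$ does not secretly depend on $v$ (it does not, because $\{v_1 : J(v_1) \neq \varnothing\}$ is determined by the data alone). The probabilistic part is the single union bound, which is routine once the coding sub-probability $\nu$ is fixed.
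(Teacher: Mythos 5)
Your proof is correct and follows essentially the same route as the paper: a single union bound over the atoms of $\nu$ with $v_0 \geq v$ to get $m \in J(v)$ with probability at least $1 - 2\epsilon$, combined with the monotone nested-interval structure to place $\wt{\theta}$ in $J(v)$ and bound $\lvert m - \wt{\theta} \rvert$ by the length of $J(v)$. The only difference is that you make explicit two points the paper treats as immediate --- the closed-interval convention for $I(v_0)$ (the paper writes $(-1,1)$ yet calls the $J(v_1)$ closed) and the compactness argument ensuring the big intersection is non-empty --- which is a welcome tightening rather than a different method.
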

Since usually $\epsilon \mapsto \delta(1, \epsilon)$ is quite 
flat in the high confidence region, as shown on previous plots, 
we see that, in the high confidence region we are
mostly interested in in this paper, 
the order of magnitude of the adaptive confidence 
bound is not much more than twice the value $\delta(v,\epsilon)$
of the confidence bound we would have obtained for the estimator
$\wh{\theta}(v)$ which we could have used had we known the 
exact value of the variance beforehand.

\section{Worst case empirical mean deviations for a given kurtosis value}

In the previous sections, we studied truncation techniques
suited to various prior hypotheses on the sample distribution.
It is interesting to compare them to the performance of the 
empirical mean estimator. This section is devoted to 
upper bounds, whereas the next will study corresponding lower 
bounds.

When the variance is known and nothing else, it is easy to see,
using Chebyshev's inequality for the second moment
that the empirical mean
$$
M = \frac{1}{n} \sum_{i=1}^n Y_i
$$  
is such that 
\begin{equation}
\label{eq7.1}
\B{P} \biggl( \lvert M - m \rvert \geq \sqrt{\frac{v}{2 \epsilon n}} 
\biggr) \leq 2 \epsilon.
\end{equation}

The behaviour of the empirical mean for a given kurtosis is
not so straightforward. The following bound uses a truncation 
argument, allowing to study separately the behaviour of large
and rare values. It is to our knowledge a new result. 
We will show later in this paper that its leading term is essentially 
tight (up to the $(3/2)^{1/4}$ multiplicative constant due to the 
union bound argument). 

\begin{prop}
\label{prop6.1}
For any probability distribution whose kurtosis is not greater than
$\kappa$, the empirical mean $M$ is such that with probability at 
least $1 - 2 \epsilon$, 
\begin{multline*}
\frac{\lvert M - m \rvert}{\sqrt{v}} \leq \frac{2 \log(\frac{3}{2} 
\epsilon^{-1}) \sqrt{\kappa}}{5n} +
\sqrt{\frac{2 \log(\frac{3}{2} \epsilon^{-1})}{n}}  
\\ + \Biggl( \frac{3 \kappa}{2 \epsilon n^3} \Biggr)^{1/4} 
\Biggl( 1 + \frac{3^5(n-1) \log(\frac{3}{2}\epsilon^{-1})^2 
\kappa}{2500 n^2} + \frac{12 \sqrt{2} \log ( \frac{3}{2} 
\epsilon^{-1})^{3/2} \sqrt{\kappa}}{25 n^{3/2}} \Biggr)^{1/4}.
\end{multline*}
\end{prop}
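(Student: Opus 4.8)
The plan is to decompose the deviation through a truncation of the centred variables and to control the two resulting pieces by complementary devices: an exponential (Bernstein/Bennett) inequality for the bounded bulk, and a fourth moment Markov inequality for the heavy tail. The total failure budget $2\epsilon$ is to be split into three equal pieces of $\tfrac{2}{3}\epsilon$ (the upper and lower bulk deviations, plus the tail), which is exactly what produces the ubiquitous factors $\log(\tfrac32\epsilon^{-1})$ and $\tfrac32\epsilon^{-1}$, since $\exp\bigl(-\log(\tfrac32\epsilon^{-1})\bigr)=\tfrac23\epsilon$ and inverting a fourth moment Markov bound at level $\tfrac23\epsilon$ divides by $\tfrac23\epsilon$, i.e. multiplies by $\tfrac{3}{2\epsilon}$.

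First I would centre, writing $Z_i = Y_i - m$, so that $\B{E}(Z_i)=0$, $\B{E}(Z_i^2)=v$ and, by the kurtosis hypothesis, $\B{E}(Z_i^4)\le \kappa v^2$. Fixing a threshold $T>0$ to be calibrated at the end, I split $Z_i = Z_i' + W_i$ with $Z_i' = Z_i\,\B{1}\{|Z_i|\le T\}$ and $W_i = Z_i\,\B{1}\{|Z_i|>T\}$. Since the global mean vanishes, $\B{E}(Z_i') = -\,\B{E}(W_i)$, so that the two centrings combine and
$$
M - m = \frac{1}{n}\sum_{i=1}^n \bigl( Z_i' - \B{E} Z_i' \bigr) + \frac{1}{n}\sum_{i=1}^n \bigl( W_i - \B{E} W_i \bigr),
$$
exhibiting the deviation as a bounded centred sum plus a centred heavy tailed sum.

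For the bounded part, the variables $Z_i' - \B{E} Z_i'$ lie in an interval of length at most $2T$ and have variance at most $v$, so Bernstein's inequality gives, on each side with probability $1-\tfrac23\epsilon$, a bound of Bennett type combining a $\sqrt{2vL/n}$ term with a range term proportional to $TL/n$, where $L = \log(\tfrac32\epsilon^{-1})$; after dividing by $\sqrt v$ this accounts for the first two terms. For the heavy tailed part an exponential control is hopeless under a mere kurtosis bound, so I would instead apply Markov's inequality to the fourth moment of the centred sum. Using independence,
$$
\B{E}\Biggl[ \Bigl( \sum_{i=1}^n \bigl( W_i - \B{E} W_i \bigr) \Bigr)^4 \Biggr] = n\,\B{E}\bigl[ ( W - \B{E} W )^4 \bigr] + 3n(n-1)\,\B{E}\bigl[ ( W - \B{E} W )^2 \bigr]^2,
$$
and I would bound the truncated moments through $\B{E}(W^2)\le \kappa v^2 / T^2$ and $\B{E}(W^4)\le \kappa v^2$, coming from $Z^2\,\B{1}\{|Z|>T\}\le Z^4/T^2$ and $Z^4\,\B{1}\{|Z|>T\}\le Z^4$. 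Inverting the Markov bound at level $\tfrac23\epsilon$ yields a deviation scaling like $\bigl(\kappa v^2/(\epsilon n^3)\bigr)^{1/4}$ multiplied by a fourth root correction factor, matching the structure of the third term: the constant $1$ is the image of $n\,\B{E}(W-\B{E} W)^4$, the $(n-1)$ contribution is the image of $3n(n-1)\,\B{E}(W-\B{E} W)^2{}^2$, and the remaining mixed term records the non centred correction $\B{E}(W)$.

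Finally I would take a union bound over the three failure events and calibrate the free threshold $T$, so as to balance the range term of the Bernstein step against the variance entering the Markov step; it is this single calibration that fixes the precise numerical constants ($\tfrac25$, $\tfrac{1}{2500}$, $\tfrac{12\sqrt2}{25}$, and the power $\tfrac34$ of $n$). I expect this last bookkeeping to be the main obstacle: one must track the centred fourth moment expansion, control the bias $\B{E}(W)$ and the truncated variance $\B{E}(Z'^2)\le v$ with enough precision, and reconcile the choice of $T$ with the appearance of the logarithmic factors inside the third term, so that every constant comes out exactly rather than merely up to an absolute factor. The two probabilistic ingredients are themselves standard; the delicate point is purely the tuning of the truncation level against the three way split of the confidence budget.
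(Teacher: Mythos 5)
Your high-level plan does coincide with the paper's announced strategy: a truncation separating rare large values from the bulk, a three-way split of the budget into pieces of $\tfrac{2}{3}\epsilon$ (the paper proves a $1-3\epsilon$ bound and then substitutes $\epsilon \to \tfrac{2}{3}\epsilon$, which is exactly where the $\log(\tfrac{3}{2}\epsilon^{-1})$ factors come from), and a fourth-moment Markov inequality for the tail, expanded by the same i.i.d. formula you write. But your specific decomposition --- hard truncation at a level $T$ plus Bernstein for the bulk --- cannot yield the stated inequality, and the calibration of $T$ that you postpone as ``bookkeeping'' is in fact an impossibility, not a tuning problem. Write $L = \log(\tfrac{3}{2}\epsilon^{-1})$. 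To make your Bernstein range term $\propto TL/n$ reproduce the first term $\tfrac{2L\sqrt{\kappa v}}{5n}$, you need $T$ of order $\sqrt{\kappa v}$, independent of $n$. But to make your tail term reproduce the third term, you need $3(n-1)\bigl[\B{E}(W^2)\bigr]^2 \le 3(n-1)\kappa^2 v^4/T^4$ to be at most $\kappa v^2 \cdot \tfrac{3^5 (n-1) L^2 \kappa}{2500\, n^2}$, which forces $T^2 \ge \tfrac{50\, n v}{9 L}$, i.e. $T \ge \tfrac{5}{3}\sqrt{2nv/L}$. These two requirements differ by a factor of order $\sqrt{n/(L\kappa)}$, unbounded in $n$. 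With the large choice of $T$ the Bernstein (or Bennett) range correction is $TL/(3n) \ge \tfrac{5}{9}\sqrt{2Lv/n}$, a constant fraction of the leading term, so the coefficient of $\sqrt{2L/n}$ exceeds $1$ by a constant --- and this excess, of order $n^{-1/2}$, is not absorbed by the proposition's correction terms, which are $O(n^{-3/4})$ and $O(n^{-1})$. With the small choice of $T$, your tail variance contribution $3(n-1)\kappa^2v^4/T^4$ makes the Markov term of order $(\epsilon n^2)^{-1/4}$ instead of $(\epsilon n^3)^{-1/4}$. Either way you prove a strictly weaker bound; no intermediate $T$ escapes the dilemma.

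The paper's way out of this dilemma is that its truncation is soft, not hard: it writes $\alpha(Y_i - m) = T[\alpha(Y_i-m)] + g[\alpha(Y_i-m)]$, where $T$ is the threshold function of Section 1 and $g(x) = x - T(x)$ satisfies $\lvert g(x) \rvert \le \min\{\lvert x\rvert^3/5,\ 3x^2/10,\ \lvert x \rvert\}$. The crucial point is that $\exp\bigl[T_+(x)\bigr] = 1 + x + x^2/2$ is a polynomial, so the exponential moment of the truncated sum is controlled by the first two moments of $Y$ alone, with \emph{no range penalty at all}: for $\alpha = \sqrt{2L/(nv)}$ the bulk deviation is exactly $\tfrac{\alpha v}{2} + \tfrac{L}{n\alpha} = \sqrt{2Lv/n}$, coefficient one. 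The price is shifted entirely onto the remainder $g$, which is simultaneously cubic near the origin --- its bias is at most $\tfrac{\alpha^3}{5}\B{E}\lvert Y - m\rvert^3 \le \tfrac{\alpha^3}{5}\sqrt{\kappa v^3}$, giving precisely the term $\tfrac{2L\sqrt{\kappa}}{5n}$ --- and at most quadratic/linear at infinity, so that its centred fluctuation is handled by exactly your fourth-moment Markov step, the bounds $3x^2/10$ and $\lvert x \rvert$ producing the constants $3^5/2500$ and $12\sqrt{2}/25$. A hard cut-off has no scale at which it mimics this ``cubic bias, sublinear tail'' behaviour, and that is the missing idea rather than a matter of constants.
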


Let us also stress here the fact that estimating the variance 
under a kurtosis bound, using the empirical estimator 
$$
M_2 = \frac{1}{n} \sum_{i=1}^n Y_i^2
$$
of the moment of order two is likely to be unsuccessful at 
high confidence levels. Indeed, computing the quadratic
mean 
$$
\B{E} \Bigl\{ \bigl[ M_2 - \B{E}(Y^2) \bigr]^2 \Bigr\} 
= \frac{\B{E}(Y^4) - \B{E}(Y^2)^2}{n} \leq \frac{(c-1)}{n} \B{E}(Y^2)^2,
$$
we can only conclude, using Chebyshev's inequality,  
that with probability at least $1 - 2 \epsilon$
$$
\B{E}(Y^2) \leq \frac{M_2}{1 - \sqrt{\frac{c-1}{2n\epsilon}}},
$$
a bound which breaks down at level of confidence 
$\ds \epsilon = \frac{c-1}{2n}$, and which we do not suspect to 
be substantially improvable in the worst case. 
In contrast to this, Proposition \thmref{prop5.2} 
provides a variance estimator at high confidence levels. 

\section{Lower bounds}

\subsection{Lower bound for Gaussian distributions}

This lower bound is well known. We recall it here for the sake
of completeness. 

The empirical mean cannot
be improved in the Gaussian case in the following 
precise sense.

\begin{prop}
\label{prop2.1.2}
For any estimator of the mean $\wh{\theta} : \B{R}^n \rightarrow \B{R}$, 
any variance value $v > 0$, 
and any deviation level $\eta > 0$, 
there is some Gaussian measure $\C{N}(m,v)$ (with variance $v$ and mean $m$)
such that the i.i.d. sample of length $n$ drawn from this distribution 
is such that 
$$
\B{P} \bigl( \wh{\theta} \geq m + \eta \bigr) \geq \B{P} 
\bigl( M \geq m + \eta \bigr) 
\quad \text{ or } \quad 
\B{P} \bigl( \wh{\theta} \leq m - \eta \bigr) \geq 
\B{P} \bigl( M \leq m - \eta \bigr),
$$
where $\ds M = \frac{1}{n} \sum_{i=1}^n Y_i$ is the empirical mean.
\end{prop}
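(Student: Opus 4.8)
The plan is to reduce the one-sided deviation comparison to a two-point hypothesis testing problem and to invoke the optimality of the likelihood-ratio test. Fix an arbitrary $m \in \B{R}$ and set $m' = m + 2\eta$, so that the midpoint of $m$ and $m'$ is exactly $m + \eta = m' - \eta$. Write $\B{P}_\mu$ and $\B{E}_\mu$ for probability and expectation when the sample $(Y_i)_{i=1}^n$ is i.i.d.\ $\C{N}(\mu, v)$, and put $p = \B{P}_m(M \geq m + \eta)$. Since $M \sim \C{N}(\mu, v/n)$ under $\B{P}_\mu$, translation invariance together with the symmetry of the Gaussian gives $\B{P}_m(M \geq m + \eta) = \B{P}_{m'}(M \leq m' - \eta) = p$.

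First I would turn the estimator into a test. Consider the non-randomized test $\psi = \mathbf{1}\{\wh{\theta} \geq m + \eta\}$ for deciding between $\B{P}_m$ and $\B{P}_{m'}$, and observe that its total error is
\[ \B{E}_m[\psi] + \B{E}_{m'}[1 - \psi] = \B{P}_m\bigl(\wh{\theta} \geq m + \eta\bigr) + \B{P}_{m'}\bigl(\wh{\theta} < m + \eta\bigr). \]
For any test $\phi$ one has $\B{E}_m[\phi] + \B{E}_{m'}[1-\phi] = 1 - \int \phi \, d(\B{P}_{m'} - \B{P}_m)$, which is minimized by the likelihood-ratio test $\phi^* = \mathbf{1}\{d\B{P}_{m'}/d\B{P}_m > 1\}$. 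Because the log-likelihood ratio of the two product Gaussians is an increasing affine function of $M$ (a location family with $m' > m$), the test $\phi^*$ is exactly the midpoint threshold $\mathbf{1}\{M > m + \eta\}$. Evaluating its two errors with the distributions of $M$ recorded above gives $\B{E}_m[\phi^*] + \B{E}_{m'}[1 - \phi^*] = \B{P}_m(M > m + \eta) + \B{P}_{m'}(M \leq m + \eta) = 2p$.

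It remains to chain the inequalities and select the right mean. Since $\{\wh{\theta} < m + \eta\} \subseteq \{\wh{\theta} \leq m' - \eta\}$, combining the displayed identity with the lower bound $2p$ on the minimal testing error yields
\[ \B{P}_m\bigl(\wh{\theta} \geq m + \eta\bigr) + \B{P}_{m'}\bigl(\wh{\theta} \leq m' - \eta\bigr) \geq 2p. \]
Hence at least one of the two summands is at least $p$. If it is the first, then for the Gaussian $\C{N}(m,v)$ we have $\B{P}(\wh{\theta} \geq m + \eta) \geq p = \B{P}(M \geq m + \eta)$, which is the first alternative; if it is the second, then for $\C{N}(m',v)$ we have $\B{P}(\wh{\theta} \leq m' - \eta) \geq p = \B{P}(M \leq m' - \eta)$, which is the second alternative. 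Either way the required Gaussian measure exists. The one genuinely substantive step is the identification of the minimal testing error with $2p$; the rest is bookkeeping. The main obstacle is recognizing that a one-sided deviation guarantee at a single mean is too weak, and that the correct move is the two-point comparison at $m$ and $m + 2\eta$, whose shared decision boundary $m + \eta$ makes the two deviation events cover the sample space — so that $\{\wh{\theta} < m + \eta\}$ lies inside the lower-deviation event $\{\wh{\theta} \leq m' - \eta\}$ — and thereby forces the Neyman--Pearson trade-off.
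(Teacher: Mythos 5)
Your proof is correct and takes essentially the same route as the paper: the paper's two-point argument with Gaussians at $\pm\eta$, bounding the sum of the two one-sided deviation probabilities below by the mass of $\B{P}_1 \wedge \B{P}_2$, is exactly the Neyman--Pearson total-error bound you use, and both arguments identify the optimal test as the midpoint threshold on $M$. The only difference is presentational: you compute the likelihood ratio explicitly to locate that threshold, whereas the paper appeals to the sufficiency of $M$ in the Gaussian shift model.
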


This means that any distribution free symmetric confidence interval based on 
the (supposedly known) value of the variance has to include the 
confidence interval for the empirical mean of a Gaussian distribution, 
whose length is exactly known and equal to the properly
scaled quantile of the Gaussian measure. 

Let us state this more precisely. With the 
notations of the previous proposition 
\begin{multline*}
\B{P}(M \geq m + \eta) = \B{P} \bigl( M \leq m - \eta) 
\\ = G\left[ \left( \sqrt{\frac{n}{v}} \eta , + \infty \right( 
\right] = 1 - F \left( \sqrt{\frac{n}{v}} \eta \right),
\end{multline*}
where $G$ is the standard normal measure and $F$ its distribution 
function. 

The upper bounds proved in this paper can be decomposed into 
$$
\B{P}(\wh{\theta} \geq m + \eta) \leq \epsilon \qquad \text{and} 
\qquad \B{P} (\wh{\theta} \leq m - \eta) \leq \epsilon,
$$
although we preferred for simplicity to state them 
in the slightly weaker form $\B{P}( \lvert 
\theta - m \rvert \geq \eta) \leq 2 \epsilon$.

As the Gaussian shift model made of Gaussian distributions with 
a given variance and varying means, is included in all the 
models we consider to state bounds, we necessarily should have 
according to the previous proposition
$$
\epsilon \geq 1 - F \left( \sqrt{\frac{n}{v}} \eta \right), 
$$
which can be also written as 
$$
\eta \geq \sqrt{\frac{v}{n}} F^{-1} ( 1 - \epsilon ).
$$

Therefore some visualisation of the quality of our bounds can be
obtained by plotting $\ds \epsilon \mapsto \eta$ against 
$\ds \epsilon \mapsto \sqrt{\frac{v}{n}} F^{-1}(1 - \epsilon)$, 
as we did in the previous sections. 

\subsection{Worst performance of the empirical mean for a given 
variance}
Another way to measure the quality of the bound is to compare
it to the empirical mean outside from the Gaussian shift 
model, where we have seen that the deviations of the empirical mean 
are minimax at any confidence level. This is done in the following proposition.
\begin{prop}
\label{prop2.2}
For any value of the variance $v$, any deviation level $\eta > 0$,
there is some distribution with variance $v$ and mean $0$ such that
the i.i.d. sample of size $n$ drawn from it satisfies  
$$
\B{P}(M \geq \eta) = \B{P}(M \leq - \eta) \geq 
\frac{v \left( 1 - \frac{v}{\eta^2 n^2} \right)^{n-1}}{2 n \eta^2}.
$$
Thus, as soon as $\epsilon \leq (2e)^{-1}$, 
with probability at least $2 \epsilon$, 
$$
\lvert M - m \rvert \geq  \sqrt{\frac{v}{2 n \epsilon}} 
\left( 1 - \frac{2e \epsilon}{n} \right)^{\frac{n-1}{2}}.
$$
\end{prop}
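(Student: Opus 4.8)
The plan is to exhibit an explicit worst-case law and to bound the tail of its empirical mean from below by the probability of one very simple configuration. Since the target scales like $v/(2n\eta^2)$, the natural candidate is a heavy-tailed three-atom distribution that sits at $0$ with high probability and has rare symmetric spikes: let $Y$ take the value $+a$ with probability $p$, the value $-a$ with probability $p$, and $0$ with probability $1-2p$, where $a = n\eta$ and $p = v/(2n^2\eta^2)$. These choices are forced by $\B{E}(Y)=0$ (automatic by symmetry) and $\B{E}(Y^2)=2pa^2=v$. The law is genuine precisely when $2p = v/(n^2\eta^2)\le 1$, i.e. $\eta \ge \sqrt{v}/n$, which is exactly the regime in which the claimed right-hand side is informative; the $\epsilon$-version below will always fall strictly inside it.

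First I would prove the tail bound. Because $a=n\eta$, the event that exactly one of the $Y_i$ equals $+a$ while the other $n-1$ equal $0$ forces $M = a/n = \eta$, hence $M\ge\eta$. This event has probability exactly $np(1-2p)^{n-1}$ ($n$ choices for the spiked coordinate, each of probability $p(1-2p)^{n-1}$), so $\B{P}(M\ge\eta)\ge np(1-2p)^{n-1}$. Substituting $np = v/(2n\eta^2)$ and $1-2p = 1 - v/(n^2\eta^2)$ reproduces the stated lower bound, while $\B{P}(M\ge\eta)=\B{P}(M\le-\eta)$ is immediate from the symmetry of the law about $0$.

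For the confidence statement I would specialize $\eta$ to $\eta_* = \sqrt{v/(2n\epsilon)}\,(1-2e\epsilon/n)^{(n-1)/2}$ and show $\B{P}(M\ge\eta_*)\ge\epsilon$; adding the two symmetric tails then gives $\B{P}(\lvert M-m\rvert\ge\eta_*)\ge 2\epsilon$ since $m=0$. Writing $D=(1-2e\epsilon/n)^{n-1}$, direct substitution gives $v/(2n\eta_*^2)=\epsilon/D$ and $v/(n^2\eta_*^2)=2\epsilon/(nD)$, so the whole statement collapses onto the single elementary fact $D\ge e^{-1}$. This is the only place where $\epsilon\le(2e)^{-1}$ is used, and it is the crux: the hypothesis gives $2e\epsilon/n\le 1/n$, so by monotonicity of $x\mapsto(1-x)^{n-1}$ together with the standard inequality $(1-1/n)^{n-1}\ge e^{-1}$ (itself from $\log(1-x)\ge -x/(1-x)$) we get $D\ge(1-1/n)^{n-1}\ge e^{-1}$. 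Granting $D\ge e^{-1}$, we have $2\epsilon/(nD)\le 2e\epsilon/n$, hence $(1-2\epsilon/(nD))^{n-1}\ge D$, and multiplying by $\epsilon/D$ yields $\B{P}(M\ge\eta_*)\ge(\epsilon/D)(1-2\epsilon/(nD))^{n-1}\ge\epsilon$. The same bound $D\ge e^{-1}$ also gives $2p = 2\epsilon/(nD)\le 2e\epsilon/n\le 1/n\le 1$, so the three-atom law is legitimate throughout the $\epsilon$-version and no separate validity check is needed. The work is thus almost entirely bookkeeping; the only real obstacle is arranging the $\eta_*$-substitution so that everything reduces cleanly to $(1-1/n)^{n-1}\ge e^{-1}$.
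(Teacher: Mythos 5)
Your proof is correct and follows essentially the same route as the paper: the same three-atom law supported on $\{-n\eta,0,n\eta\}$ with spike probability $v/(2n^2\eta^2)$, and the same lower bound via the event of exactly one spike among $n$ draws. The only difference is that you spell out the substitution $\eta_*=\sqrt{v/(2n\epsilon)}\,(1-2e\epsilon/n)^{(n-1)/2}$ and the reduction to $(1-1/n)^{n-1}\geq e^{-1}$, which the paper leaves implicit; this is a welcome completion rather than a departure.
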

Let us remark that this bound is pretty tight, as shown in the 
next plot, since,
according to equation \myeq{eq7.1}
with probability at least $1 - 2 \epsilon$, 
$$
\lvert M - m \rvert \leq \sqrt{\frac{v}{2n\epsilon}}.
$$
\mbox{} \hfill \raisebox{-4ex}[0.97\height][0ex]{
\includegraphics{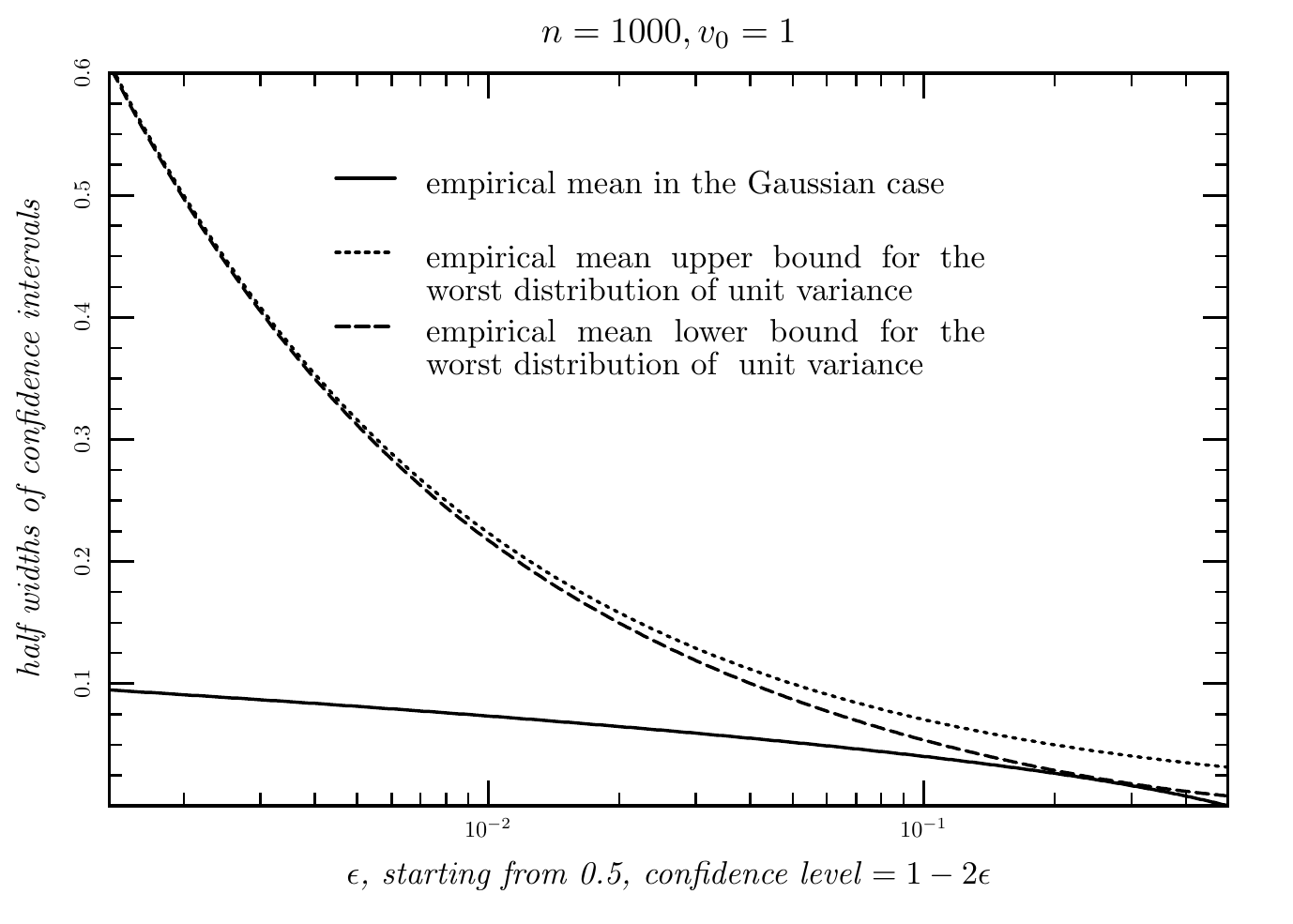}}
\hfill \mbox{}

\subsection{Worst performance of the empirical mean for a given kurtosis}
\begin{prop}
\label{prop7.3}
For any $c \geq 1 + 1/n$, and any $\ds \epsilon \leq (4e)^{-1}$, 
there is a probability measure on the real line, 
with uniform kurtosis equal to $c$  and unit variance, 
such that with probability at least $2 \epsilon$, 
$$
\lvert M - m \rvert \geq \biggl( \frac{c-1}{4 n^3 \epsilon} \biggr)^{1/4} 
\left( 1 -  \frac{4 e \epsilon}{n} \right)^{(n-1)/4}.
$$ 
\end{prop}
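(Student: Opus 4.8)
\textbf{Plan.} Since this is a minimax lower bound, it suffices to exhibit one explicit distribution achieving the claimed deviation. The structure should parallel the construction behind Proposition \thmref{prop2.2}, where the worst case for a given \emph{variance} is an asymmetric two-point law placing a rare atom at $n\eta$ (so that a single occurrence pushes the empirical mean to $\eta$), the variance being carried by that atom. The decisive difference here is that, as a short scaling check shows, letting the atom carry the variance forces $s \sim \sqrt{c}/n$ and produces the variance-type rate $(n\eps)^{-1/2}$ rather than the fourth-root rate. To obtain the rate $\bigl[(c-1)/(n^3\eps)\bigr]^{1/4}$ one must instead arrange that the rare atom carries the \emph{fourth moment} while the variance is supplied by a bounded bulk. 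Concretely, the plan is to use a symmetric four-point law: atoms at $\pm s$ with probability $p/2$ each and a bulk at $\pm b$ with probability $(1-p)/2$ each, calibrated by the two constraints $\B{E}(Y^2) = (1-p)b^2 + p s^2 = 1$ and $\B{E}(Y^4) = (1-p)b^4 + p s^4 = c$, with $s = n\eta$ left as the free scale. For small $p$ this gives $b\simeq 1$ and $p\,s^4 \simeq c-1$, i.e. $p \simeq (c-1)/(n\eta)^4$; the rare atom supplies precisely the excess kurtosis $c-1$ above the bulk value $1$, which is the source of the ``$c-1$'' in the statement.

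\textbf{Key steps.} First I would fix $s = n\eta$ and solve the two moment equations for $b$ and $p$, recording the exact (not merely asymptotic) value of $p$ as a function of $c$, $n$ and $\eta$. Second, I would check that the \emph{uniform} kurtosis of this law equals $c$ and not merely that the centered kurtosis $\kappa$ does: the law is symmetric, hence has null skewness, so by Lemma \thmref{lemma5.1} one has $c = \kappa$ in the regime $\kappa \geq 3$, and in the remaining range one uses the explicit formula of that lemma (or a direct computation of $\sup_\theta \B{E}[(Y-\theta)^4]/\B{E}[(Y-\theta)^2]^2$) to confirm the value $c$; this is where the hypothesis $c \geq 1 + 1/n$ is matched to an admissible choice of atoms. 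Third, I would lower bound the deviation probability. Conditioning on the event that exactly one of the $n$ samples equals $+s$ (probability $n\tfrac{p}{2}(1-p)^{n-1}$) and that the remaining $n-1$ bulk values have nonnegative sum (probability at least $\tfrac12$ by the symmetry of the bulk), one gets $M = \bigl[s + (\text{bulk sum})\bigr]/n \geq s/n = \eta$, whence
$$
\B{P}(M \geq \eta) \;\geq\; \tfrac14\, n\,p\,(1-p)^{n-1}.
$$
Fourth, I would substitute $p \simeq (c-1)/(n\eta)^4$, impose that the right-hand side be at least $2\eps$, and solve the resulting relation for $\eta$; the factor $(1-p)^{n-1}$, together with the bound $p \leq 4e\eps/n$ (which in turn requires $\eps \leq (4e)^{-1}$ to keep $p<1$ and $1-4e\eps/n>0$), produces the correction $\bigl(1 - 4e\eps/n\bigr)^{(n-1)/4}$, the exponent $(n-1)/4$ being the exponent $n-1$ filtered through the fourth root that converts a probability into a deviation.

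\textbf{Main obstacle.} The delicate point is the third step: because the variance must come from a \emph{spread} bulk rather than a point mass, a single large jump no longer determines $M$ exactly, and one must control the randomness of the $n-1$ remaining coordinates. The symmetry of the bulk makes this clean (the bulk sum is nonnegative with probability at least one half), but it is precisely this maneuver that costs a constant and must be tracked carefully to land the sharp factor $1/4$ and the constant $4e$. A secondary difficulty is purely bookkeeping: one must carry the exact values of $b$ and $p$ (not just their leading-order approximations) through the optimization so that the final bound is a genuine inequality valid for all $c \geq 1+1/n$ and all $\eps \leq (4e)^{-1}$, and so that the uniform-kurtosis verification via Lemma \thmref{lemma5.1} is exact rather than asymptotic.
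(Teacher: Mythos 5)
Your construction and event decomposition are exactly those of the paper's proof: the paper takes atoms at $\pm n\eta$ with probability $q$ each and at $\pm \xi$ with probability $\tfrac{1}{2} - q$ each (so $p = 2q$ and $b = \xi$ in your notation), fixes $\xi$ by the unit-variance constraint, and bounds $\B{P}(M \geq \eta)$ from below by the probability that one coordinate equals $+n\eta$ while the other $n-1$ stay in the bulk with nonnegative sum, the symmetry of the bulk costing the same factor $\tfrac{1}{2}$, so that both proofs arrive at the identical quantity $\frac{nq}{2}(1-2q)^{n-1} = \frac{1}{4} n p (1-p)^{n-1}$. The one point to correct is the tail accounting in your fourth step: you impose the \emph{one-tail} requirement $\frac{1}{4} n p (1-p)^{n-1} \geq 2\epsilon$, but the stated constant requires splitting $2\epsilon$ over the two tails. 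Since the law is symmetric, $\B{P}(\lvert M - m \rvert \geq \eta) = \B{P}(M \geq \eta) + \B{P}(M \leq -\eta) = 2\,\B{P}(M \geq \eta)$, so it suffices that each tail be at least $\epsilon$; this is what the paper does, choosing $q = \frac{2\epsilon}{n} \bigl(1 - \frac{4e\epsilon}{n}\bigr)^{-(n-1)}$ so that $\frac{nq}{2}(1-2q)^{n-1} \geq \epsilon$. Requiring one tail to carry all of $2\epsilon$ doubles the needed atom probability $p$, and since $\eta \simeq \bigl[(c-1)/(p n^4)\bigr]^{1/4}$, your calibration only yields $\bigl(\frac{c-1}{8 n^3 \epsilon}\bigr)^{1/4}$ times the same correction factor, a loss of $2^{-1/4}$ against the statement. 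With that single change your plan reproduces the paper's proof; note finally that the paper never enforces exact equality of the kurtosis (it only verifies $\kappa \leq c$, which is what the minimax reading of the statement needs), so your second step --- forcing the uniform kurtosis to equal $c$ exactly via Lemma \thmref{lemma5.1} --- is more careful than, but not required by, the argument the paper actually gives.
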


Let us plot this lower bound as well as the corresponding upper bound
given by Proposition \thmref{prop6.1}, for a sample of size $n = 2000$
and a kurtosis $c=6$. The space between the two curves is of moderate
size, showing that we got the order of magnitude right in these bounds.\\
\mbox{} \hfill \includegraphics{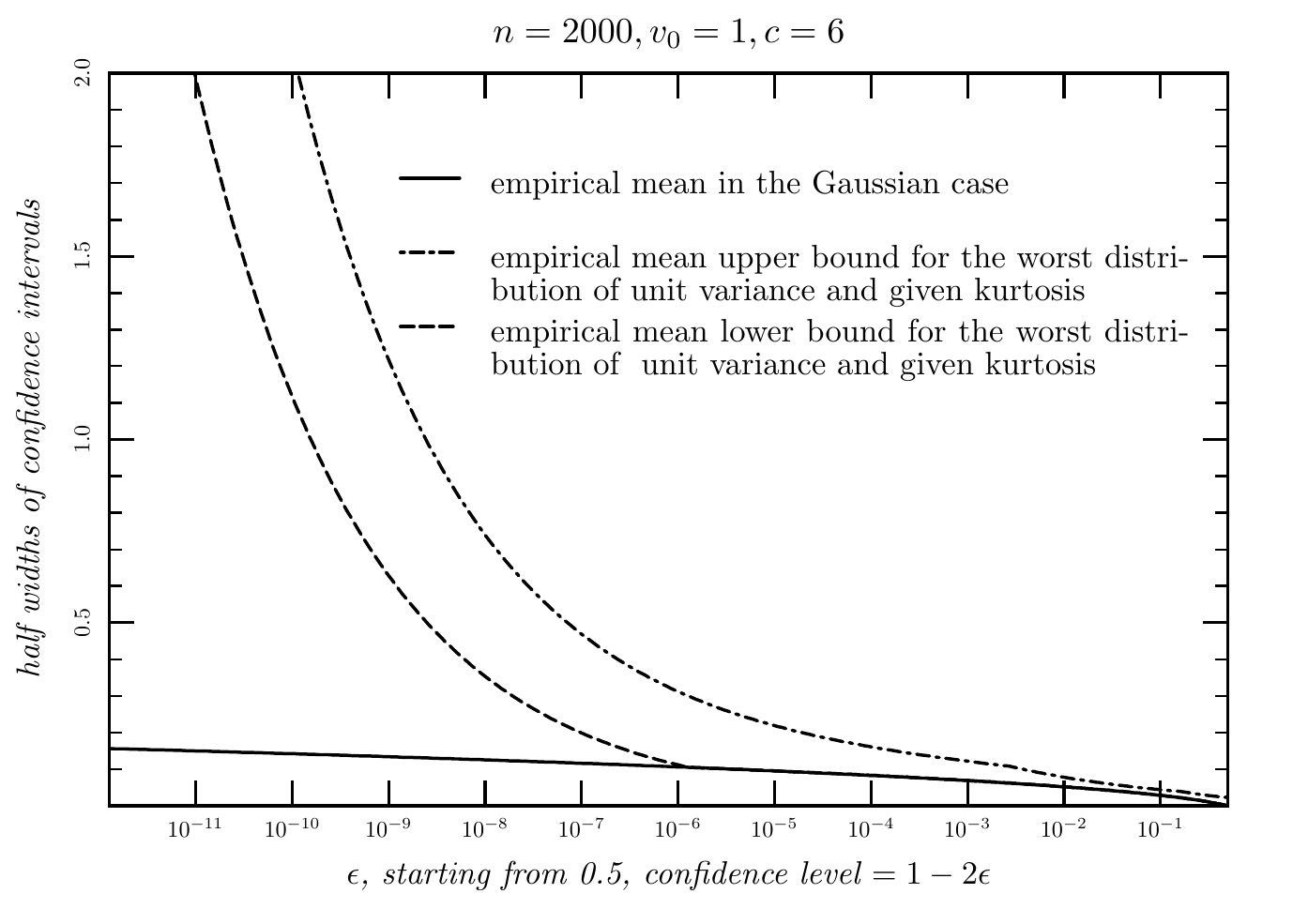}  
\hfill \mbox{}\\[-9ex]

\section{Proofs}

\subsection{Proof of Proposition \thmref{prop3.1}}

Let us start with some bounds for the map $x \mapsto \log \bigl( 1 + x + \frac{x^2}{2} 
\bigr)$. 
\begin{lemma}
\label{lemma9.1}
The map $x \mapsto \log \bigl( 1 + x + \frac{x^2}{2} \bigr)$ satisfies 
for any $x \in \B{R}$, 
$$
- \frac{x^4}{38} \leq \log \biggl( 1 + x + \frac{x^2}{2} 
\biggr) - x + \frac{x^3}{6} \leq \frac{x^4}{6}.
$$
\end{lemma}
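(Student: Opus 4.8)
The plan is to reduce both inequalities to a single sign computation on the derivative of the function in question. Write $f(x) = \log\bigl(1 + x + \tfrac{x^2}{2}\bigr) - x + \tfrac{x^3}{6}$, so that the claim is $-\tfrac{x^4}{38} \le f(x) \le \tfrac{x^4}{6}$ for all real $x$. A short Taylor computation gives $f(x) = \tfrac{x^4}{8} + O(x^5)$ near the origin, which already shows the two bounds are of the right form and that $0$ is the natural reference point; the task is to turn this local picture into a global one. The key is that the derivative factors cleanly: differentiating $\log(1 + x + \tfrac{x^2}{2})$ gives $\tfrac{1+x}{1+x+x^2/2}$, and after subtracting $1$ and adding $\tfrac{x^2}{2}$ everything collapses to
$$
f'(x) = \frac{x^3(x+2)}{2\,(x^2 + 2x + 2)},
$$
where the denominator $x^2 + 2x + 2 = (x+1)^2 + 1$ is strictly positive for every $x$.

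For the upper bound I would set $h(x) = \tfrac{x^4}{6} - f(x)$, so $h(0) = 0$, and compute
$$
h'(x) = x^3 \left[ \frac{2}{3} - \frac{x+2}{2(x^2+2x+2)} \right] = x^3 \cdot \frac{4x^2 + 5x + 2}{6\,(x^2 + 2x + 2)}.
$$
The quadratic $4x^2 + 5x + 2$ has discriminant $25 - 32 = -7 < 0$, hence is everywhere positive, so the bracketed factor is positive and $h'(x)$ has the sign of $x^3$. Thus $h$ decreases on $(-\infty,0)$ and increases on $(0,\infty)$, attaining its global minimum value $h(0)=0$ at the origin; therefore $h \ge 0$, which is exactly $f(x) \le \tfrac{x^4}{6}$.

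The lower bound is handled identically with $p(x) = f(x) + \tfrac{x^4}{38}$, for which $p(0)=0$ and
$$
p'(x) = x^3 \left[ \frac{x+2}{2(x^2+2x+2)} + \frac{2}{19} \right] = x^3 \cdot \frac{4x^2 + 27x + 46}{38\,(x^2 + 2x + 2)},
$$
where the quadratic $4x^2 + 27x + 46$ again has discriminant $729 - 736 = -7 < 0$ and so stays positive; the same monotonicity-and-minimum argument gives $p \ge 0$, i.e. $f(x) \ge -\tfrac{x^4}{38}$. There is no real obstacle beyond getting the algebra right: the only delicate point is that the constant $38$ is calibrated precisely so that the resulting quadratic is positive definite (the sharp value of the constant, obtained by forcing the discriminant to vanish, is $16/(\sqrt{2}-1) \approx 38.6$, so $38$ is safely admissible), and correspondingly $\tfrac16$ is a convenient round choice above the sharp upper constant $16/(1+\sqrt2) \approx 6.63^{-1}$. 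I would simply verify the two discriminants are negative and let the sign of $x^3$ do the rest.
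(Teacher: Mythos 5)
Your proof is correct and follows essentially the same route as the paper: differentiate the difference between $\frac{x^4}{\text{const}}$ and $\log\bigl(1+x+\frac{x^2}{2}\bigr)-x+\frac{x^3}{6}$, factor the derivative as $x^3$ times a quadratic with negative discriminant over a positive denominator, and conclude that $0$ is a global extremum. The only (immaterial) difference is that the paper keeps the coefficient as a parameter $a$ and solves the discriminant condition to get the sharp constants $\frac{(1+\sqrt{2})}{16}$ and $\frac{(\sqrt{2}-1)}{16}$ before weakening to $\frac{1}{6}$ and $\frac{1}{38}$, whereas you verify the discriminants directly for the stated constants and note the sharp values in passing.
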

\begin{proof}
Let us consider for some positive real parameter $a$ the function 
$$
f(x) = \log \bigl( 1 + x + \tfrac{x^2}{2} \bigr) - x + \frac{x^3}{6} 
-  \frac{a x^4}{8}.
$$
We can study its sign through its derivative
\begin{multline*}
f'(x) = \frac{1 + x}{1 + x + \frac{x^2}{2}} - 1 + \frac{x^2}{2} 
- \frac{a x^3}{2} =  \frac{x^2 \bigl( x + \frac{x^2}{2} \bigr) }{2 
\bigl( 1 + 
x + \frac{x^2}{2} \bigr)} - \frac{a x^3}{2} \\ = \frac{x^3 \bigl( 1 + \frac{x}{2} - a - ax - \frac{a x^2}{2} \bigr)}  {2 \bigl( 
1 + x + \frac{x^2}{2} \bigr)} 
= - \frac{x^3 \bigl[(a-1) + (a - \frac{1}{2}) x + \frac{a x^2}{2} \bigr]}{ 
2 \bigl( 1 + x + \frac{x^2}{2} \bigr)}.
\end{multline*}
When $(a-\frac{1}{2})^2 - 2a(a-1) \leq 0$, $f'(x)$ has the same sign as 
$- x$, showing that $\sup_{\B{R}} f = 0$, since $f(0) = 0$.
This condition can also be written as 
$ a^2 - a - \frac{1}{4} \geq 0$, and is fulfilled when 
$a = \frac{1 + \sqrt{2}}{2}$.  
Thus 
$$
\log \bigl( 1 + x + \tfrac{x^2}{2} \bigr) - x + \frac{x^3}{6} 
\leq \frac{(1 + \sqrt{2}) x^4}{16} \leq \frac{x^4}{6}, 
\qquad x \in \B{R}.
$$
Let us proceed to the lower bound now. 
Consider the same computations as above, but with a negative parameter $a$. 
In this case, under the same discriminant condition, $f'(x)$ has the 
same sign as $x$, showing that 
$\inf_{\B{R}} f = 0$. For the lower bound, we can thus take 
$a = \frac{1 - \sqrt{2}}{2}$, proving that
$$
- \frac{x^4}{38} \leq - \frac{\bigl( \sqrt{2}-1 \bigr) x^4}{16} \leq 
\log \bigl( 1 + x + \frac{x^2}{2} \bigr) - x + \frac{x^3}{6}, \qquad 
x \in \B{R}.
$$
\end{proof}

We will also need the following property of the {\em truncated 
exponential function} $\ds \frac{1}{2} 
\leq 1 +  x + \frac{x^2}{2} \simeq \exp(x)$:
\begin{equation}
\label{eq9.1.2}
- \log \biggl( 1 - x + \frac{x^2}{2} \biggr) 
= \log \biggl( \frac{1 + x + \frac{x^2}{2}}{1 + \frac{x^4}{4}} \biggr) 
\leq \log \biggl( 1 + x + \frac{x^2}{2} \biggr), \qquad x \in \B{R},
\end{equation}
so that
$$
- \log \biggl( 1 - x + \frac{x^2}{2} \biggr) 
\overset{\text{def}}{=} T_-(x) \leq T(x) \leq T_+(x) \overset{\text{def}}{=} \log \biggl( 1 + x + \frac{x^2}{2} \biggr).
$$
Accordingly
$$
\wh{\theta}_{\alpha}(\theta_0) \leq \theta_0 
+ \frac{1}{n \alpha} \sum_{i=1}^n T_+\bigl[ \alpha(Y_i - \theta_0) \bigr].
$$
We can then compute the exponential moment
\begin{multline*}
\B{E} \biggl\{ \exp \biggl[ \sum_{i=1}^n 
T_+\bigl[ \alpha(Y_i - \theta_0) \bigr] \biggr] \biggr\} 
\\ = \prod_{i=1}^n \B{E} \biggl( 1 + \alpha(Y_i - \theta_0) 
+ \frac{\alpha^2}{2} (Y_i - \theta_0)^2 \biggr) 
\\ = \exp \Biggl[ n \log \biggl( 1 + \alpha(m-\theta_0) 
+ \frac{\alpha^2}{2} \bigl[v + (m-\theta_0)^2 \bigr] \biggr) \Biggr].
\end{multline*}
From the exponential Chebyshev inequality $\ds 
\frac{\B{P} (X \geq \eta)}{
\B{E} \bigl[ \exp(X) \bigr]} \leq \exp( - \eta)$, considering 
$\epsilon = \exp(- \eta)$, we deduce that with 
probability at least $1 - \epsilon$, 
\begin{multline}
\label{eq9.1}
\wh{\theta}_{\alpha}(\theta_0) \leq \theta_0 \\ + \frac{1}{n \alpha} 
\sum_{i=1}^n \log \biggl( 1 + \alpha(m - \theta_0) + \frac{\alpha^2}{2} 
\bigl[ v + (m - \theta_0)^2 \bigr] \biggr) + \frac{\log(\epsilon^{-1})}{n 
\alpha}.
\end{multline}
Let us now remark that for any $x \in \B{R}$, any $y \in \B{R}_+$,
according to Lemma \thmref{lemma9.1},
\begin{multline*}
\log \biggl( 1 + x + \frac{x^2}{2} + y \biggr)
\\ = \log \biggl( 1 + x + \frac{x^2}{2} \biggr) 
+ \log \biggl( 1 + \frac{y}{1 + x + \frac{x^2}{2}} \biggr)  
\\ \leq x - \frac{x^3}{6} + \frac{x^4}{6} + 
\frac{y}{1 + \frac{x^4}{4}} \biggl( 1 - x + \frac{x^2}{2} \biggr)
\\ \leq x - \frac{x^3}{6} + \frac{x^4}{6} + y - xy + \frac{x^2y}{2}.
\end{multline*}
Thus
\begin{multline*}
\wh{\theta}_{\alpha}(\theta_0) \leq m + \frac{\alpha v}{2} + 
\frac{\log(\epsilon^{-1})}{n \alpha} \\ - \frac{\alpha^2 (m - \theta_0)^3}{6} 
+ \frac{\alpha^3 (m-\theta_0)^4}{6} - \frac{\alpha^2 v}{2}  
\biggl( m - \theta_0 - 
\frac{\alpha (m- \theta_0)^2}{2} \biggr).
\end{multline*}
In the same way, considering $\theta_0 - Y_i$ instead of $Y_i - \theta_0$
and using the symmetry of $T(x)$, we get with probability at least $1 - \epsilon$
\begin{multline*}
\wh{\theta}_{\alpha}(\theta_0)
\geq m - \frac{\alpha v}{2} - \frac{\log(\epsilon^{-1})}{n \alpha} 
\\ - \frac{\alpha^2 (m-\theta_0)^3}{6} - 
\frac{\alpha^3 (m-\theta_0)^4}{6} - \frac{\alpha^2 v}{2} 
\biggl( m - \theta_0 + \frac{\alpha (m - \theta_0)^2}{2} \biggr).
\end{multline*}
Therefore with probability at least $1 - 2 \epsilon$, 
\begin{multline*}
\lvert \wh{\theta}_{\alpha}(\theta_0) - m \rvert \leq 
\frac{\alpha v}{2} + \frac{\log(\epsilon^{-1})}{n \alpha} 
\\ + \frac{\alpha^2 \lvert m - \theta_0\rvert^3}{6}(1 
+ \alpha \lvert m - \theta_0 \rvert ) 
+ \frac{\alpha^2 \lvert m - \theta_0 \rvert v}{2} \biggl( 1  + \frac{\alpha  \lvert 
m - \theta_0  \rvert}{2} \biggr) 
\\ \leq \frac{\alpha v}{2} + \frac{\log(\epsilon^{-1})}{n \alpha} 
\\ + \frac{\alpha^2 \lvert m - \theta_0 \rvert}{2} 
\bigl( 1 + \alpha \lvert m - \theta_0 \rvert \bigr)\biggl( 
\frac{(m - \theta_0)^2}{3} + v \biggr) .
\end{multline*}
Specifically, when $\ds \alpha = \sqrt{\frac{2 \log(\epsilon^{-1})}{n v_0}}$,
with $v_0 \geq v$,  
\begin{multline*}
\lvert \wh{\theta}_{\alpha}(\theta_0) - m \rvert \leq 
\sqrt{ \frac{2 v_0 \log(\epsilon^{-1})}{n}} 
\\ + \frac{\log(\epsilon^{-1}) \lvert m - \theta_0\rvert}{3 n v_0} \bigl[ 
(m - \theta_0)^2 + 3 v_0 \bigr] \Biggl[ 1 + \lvert m - \theta_0 \rvert 
\sqrt{\frac{2 \log(\epsilon^{-1})}{
n v_0}} \Biggr].
\end{multline*}

If we prefer to keep the estimator $\wh{\theta}_{\alpha}(\theta_0)$ 
independent from the confidence level $1 - \epsilon$, 
we can choose $\ds \alpha = \sqrt{\frac{2}{n v_0}}$ and obtain 
for this value and with probability at least $1 - 2 \epsilon$, 
\begin{multline*}
\lvert \wh{\theta}_{\alpha}(\theta_0) - m \rvert 
\leq \bigl[ 1 + \log(\epsilon^{-1}) \bigr] 
\sqrt{\frac{v_0}{2 n}} 
\\ + \frac{\lvert m - \theta_0 \rvert}{3 n v_0} 
\bigl[ (m - \theta_0)^2 + 3 v_0 \bigr] \Biggl[ 
1 + \lvert m - \theta_0 \rvert \sqrt{\frac{2}{n v_0}} \Biggr].
\end{multline*}

\subsection{Proof of Proposition \thmref{prop3.2}}

If $\lvert m - \theta_0 \rvert$ is already small,
or if you are aiming at an iterative scheme, 
you can be content with the inequality
$$
\lvert \wh{\theta}_{\alpha}(\theta_0) - m \rvert \leq 
\frac{\alpha}{2}\bigl[ v_0 + (m- \theta_0)^2 \bigr] 
+ \frac{\log(\epsilon^{-1})}{n \alpha},
$$
which holds with probability at least $1 - 2 \epsilon$. 
This is a consequence of Equation \myeq{eq9.1} and the 
coarse inequality $\log(1 + x) \leq x$. 

\subsection{Proof of Proposition \thmref{prop2.1}}

We are going here to iterate the use of Proposition 
\thmref{prop3.2}. Applying it once, to start with, 
we get that with probability at least $1 - 2 \epsilon_1$
$$
\lvert \wt{\theta}_1 - m \rvert \leq \delta_1.
$$
Let
$$
\ov{\theta}_2 = \begin{cases}
\wt{\theta}_1 + \delta_1 x_2 U_2 & \text{when } \lvert \wt{\theta}_1 - m \rvert \leq 
\delta_1, \\ 
m + \delta_1 x_2 U_2, & \text{otherwise.}
\end{cases}
$$ 
We are going to use some PAC-Bayesian theorem to overcome
the fact that the sequence of estimators $\wt{\theta}_i$
is computed on the same sample.

Let us consider the prior distribution
$\pi_1$ defined as the uniform probability measure
on the interval $m + (1 + x_2) \delta_1 \times (-1, +1)$.
Let $\rho_1$ be the conditional distribution of $\ov{\theta}_2$
knowing the sample. 
From the definition of $\ov{\theta}_2$,
we see that for any value of the sample $(Y_i)_{i=1}^n$, 
the support of $\rho_1$ is included in the support
of $\pi_1$, and therefore that $\rho_1$ is absolutely continuous
with respect to $\pi_1$, with density
$$
\frac{d \rho_1}{d \pi_1} = 1 + x_2^{-1}, \qquad \rho_1 \text{ almost 
surely.} 
$$ 
Let us define the family of random variables
$$
X(\theta) = \sum_{i=1}^{n} T_+ \bigl[ \alpha_2 (Y_i - \theta) \bigr] 
- n \log \biggl( 1 + \alpha_2 (m - \theta) + \frac{\alpha_2^2}{2} 
\bigl[ v + (m - \theta)^2 \bigr] \biggr). 
$$
Integrating with respect to $\rho_1$, and using Fubini's theorem 
we get 
\begin{multline*}
\B{E} \Bigl\{ \tint \rho_1(d \theta) \exp \bigl[ X(\theta) 
- \log(1 + x_2^{-1}) \bigr] \Bigr\} \\ = 
\B{E} \Biggl\{ \int \rho_1(d \theta) \B{1} 
\biggl( \frac{d \rho_1}{d \pi_1}(\theta) > 0 \biggr) \exp \biggl\{ X(\theta) 
- \log \left[ \frac{d \rho_1}{d \pi_1}(\theta) \right] \biggr\} 
\Biggr\} \\ = \B{E} \biggl\{ \int \pi_1(d \theta) \B{1} 
\biggl( \frac{d \rho_1}{d \pi_1}(\theta) > 0 \biggr)  
\exp \bigl[ X(\theta) \bigr] \biggr\} 
\\ \leq \B{E} \Bigl\{ \tint \pi_1(d \theta) \exp \bigl[  
X(\theta) \bigr] \Bigr\} = 
\tint \pi_1(\theta) \B{E} \Bigl\{ \exp \bigl[ X (\theta) \bigr] \Bigr\}
= 1.
\end{multline*}
We can now use the fact that $\B{P} \rho_1$ is the joint distribution 
of the sample and of $\ov{\theta}_2$ and 
Chebyshev's exponential inequality, to prove with probability 
at least $1 - \epsilon_2$ that
$$
X(\ov{\theta}_2) \leq \log(\epsilon_2^{-1}) + \log \bigl(1 + x_2^{-1} \bigr).
$$
\newpage
\begin{align*}
\text{As} \quad \wh{\theta}_{\alpha_2}(\ov{\theta}_2) 
& \leq \ov{\theta}_2 + \frac{X(\ov{\theta}_2)}{n \alpha_2} 
\\ & \qquad \qquad + \frac{1}{\alpha_2} 
\log \Biggl( 1 + \alpha_2(m - \ov{\theta}_2) 
+ \frac{\alpha_2^2}{2} \bigl[ v + (m - \ov{\theta}_2)^2 \bigr] \Biggr)
\\ & \leq m + \frac{\alpha_2}{2} \bigl[ v + (m - \ov{\theta}_2)^2 \bigr] 
+ \frac{X(\ov{\theta}_2)}{n \alpha_2},
\end{align*}
we deduce that with probability at least $1 - \epsilon_2$, 
$$
\wh{\theta}_{\alpha_2}(\ov{\theta}_2) - m  \leq 
\frac{\alpha_2}{2} \bigl[ v + (m - \ov{\theta}_2)^2 \bigr] 
+ \frac{\log(\epsilon_2^{-1}) + \log(1 + x_2^{-1})}{n \alpha_2}
\leq \delta_2.
$$
We can prove in the same way that with probability at least $1 - 
\epsilon_2$, 
$$
m - \wh{\theta}_{\alpha_2}(\ov{\theta}_2) \leq 
\delta_2.
$$
We deduce that with probability at least $1 - 2 \epsilon_2$, 
$$
\lvert m - \wh{\theta}_{\alpha_2}(\ov{\theta}_2) \rvert
\leq \delta_2.
$$
Moreover, we see from the definition of $\ov{\theta}_2$ that 
with probability at least $1 - 2 \epsilon_1$, 
$\wt{\theta}_2 = \wh{\theta}_{\alpha_2}(\ov{\theta}_2)$, therefore with probability at least
$1 - 2 (\epsilon_1 + \epsilon_2)$, 
$$
\lvert m - \wt{\theta}_2 \rvert \leq \delta_2.
$$

The induction carries on in the same way. Assuming that with 
probability at least $1 - 2 \sum_{i=1}^{k-1} \epsilon_i$, 
$\lvert m - \wt{\theta}_{k-1} \rvert \leq \delta_{k-1}$, 
we deduce that with probability at least $1 - 2 \sum_{i=1}^k 
\epsilon_i$, $\lvert m - \wt{\theta}_k \rvert \leq \delta_k$.

\subsection{Proof of Proposition \thmref{prop3.1.2}}

The proof is the same as the previous one, except for the first
step, which is a consequence of the Chebyshev inequality, applied 
to the second moment of the empirical mean:
$$
\B{P} \Bigl( \lvert \wt{\theta}_1 - m \rvert \geq \delta_1 \Bigr) 
\leq \frac{\B{E} \bigl[ (\wt{\theta}_1 - m)^2 \bigr]}{\delta_1^2} 
\leq 2 \epsilon_1.
$$

\subsection{Proof of Proposition \thmref{prop2.1.1}}

Jensen's inequality for convex functions can serve to pull 
the integration with respect to $\rho_{\theta_0}$ out 
of the logarithm. Using moreover Equation \myeq{eq9.1.2}, 
we get the following chain of inequalities:

\begin{multline*}
- n \alpha M_{\alpha}(\theta_0) \leq - \tint \rho_{\theta_0}(d \theta) 
\sum_{i=1}^n \log \biggl[ 1 - \alpha \bigl(\theta - Y_i
\bigr) + \frac{\alpha^2}{2} 
\bigl( \theta - Y_i \bigr)^2 \biggr] \\ \leq  
\tint \rho_{\theta_0}(d \theta) \sum_{i=1}^n 
\log \biggl[ 1 + \alpha \bigl( \theta - Y_i \bigr) + \frac{\alpha^2}{2} 
\bigl(\theta - Y_i \bigr)^2 \biggr]. 
\end{multline*}
To proceed, let us consider the empirical process 
$$
W(\theta) = \sum_{i=1}^n \log \biggl[ 1 + \alpha \bigl( \theta - 
Y_i \bigr) + \frac{\alpha^2}{2} \bigl( \theta - Y_i \bigr)^2 \biggr].
$$
It satisfies
\begin{multline*}
\B{E} \Bigl\{ \exp \bigl[ W(\theta) \bigr] 
\Bigr\} = 
\B{E} \biggl\{ \prod_{i=1}^n \biggl[ 1 + \alpha \bigl(\theta - Y_i
\bigr) + \frac{\alpha^2}{2} \bigl( \theta - Y_i \bigr)^2 \biggr] \biggr\}
\\ = \biggl\{ 1 + \alpha(\theta - m) + \frac{\alpha^2}{2} 
\biggl[ \bigl( 
\theta - m \bigr)^2 + \B{E}\bigl[ (Y-m)^2 \bigr] \biggr] \biggr\}^n
\\ \leq \biggl\{ 1 + \alpha(\theta-m) + \frac{\alpha^2}{2} 
\biggl[ (\theta - m)^2 + v \biggr] \biggr\}^n.
\end{multline*}
Thus if we put
$$
w(\theta) = n \log \biggl\{ 1 + \alpha (\theta - m) 
+ \frac{\alpha^2}{2} \biggl[ (\theta - m)^2 + v \biggr] \biggr\}
$$
we see that 
$$
\B{E} \Bigl\{ \exp \bigl[ W(\theta) - w(\theta) \bigr] \Bigr\} \leq 1,
$$
(with equality when $\B{E}\bigl[ (Y-m)^2 \bigr] = v$). 
We can then follow the usual PAC-Bayesian route, choosing as reference
measure $\rho_{m}$. This consists in the inequalities
\begin{multline*}
\B{E} \biggl\{ \exp \biggl[ \sup_{\theta_0 \in \B{R}}  
\tint \rho_{\theta_0} ( d \theta) \bigl[ 
W(\theta) - w(\theta) \bigr] - \C{K} (\rho_{\theta_0}, \rho_m) 
\biggr] \biggr\} \\ \leq \B{E} \biggl\{ \tint \rho_m(d \theta) \exp \Bigl[ W(\theta) - w(\theta) 
\Bigr] \biggr\} \\ = 
\tint \rho_m(d \theta) \B{E} \biggl\{ \exp \biggl[ W(\theta) - w(\theta) 
\biggr] \biggr\} = 1, 
\end{multline*}
where we have used Fubini's theorem and the convex inequality
\begin{multline*}
\sup_{\rho \in \C{M}_+^1(\B{R})} \tint \rho( d \theta) 
\bigl[ W(\theta) - w(\theta) \bigr] - \C{K}(\rho, \rho_m) \\ 
= 
\log \Bigl\{ \tint \rho_m(d \theta) \exp \bigl[ W(\theta) - w(\theta) \bigr]  
\Bigr\}.
\end{multline*}
(See \cite[page 159]{Cat01} for a proof.)

From the exponential Chebyshev inequality 
$\B{P}\bigl[ X \geq \eta \bigr] \leq 
\B{E}\bigl[ \exp(X - \eta) \bigr]$, it follows that with probability
at least $1 - \epsilon$, for any $\theta_0 \in \B{R}$, 
$$
\tint \rho_{\theta_0}(d \theta) W(\theta) 
\leq \tint \rho_{\theta_0}(d \theta) w(\theta) + \C{K}(\rho_{\theta_0}, 
\rho_m ) - \log(\epsilon). 
$$
We can then remark that $\C{K}(\rho_{\theta_0}, \rho_m) 
= \frac{n \beta \alpha^2}{2} (\theta_0 - m)^2$ and that 
\begin{multline*}
\tint \rho_{\theta_0}(d \theta) 
w(\theta) \leq n \tint \rho_{\theta_0}(d \theta) 
\biggl\{ \alpha( \theta - m) + \frac{\alpha^2}{2} \bigl[ 
(\theta - m)^2 + v \bigr] \biggr\} 
\\ = n\alpha(\theta_0 - m) + \frac{n \alpha^2}{2} \bigl[ 
(\theta_0 - m)^2 + v \bigr] + \frac{1}{2 \beta},  
\end{multline*}
to conclude that with probability at least $1 - \epsilon$, 
for any $\theta_0 \in \B{R}$, 
\begin{multline*}
\tint \rho_{\theta_0}( d \theta) W(\theta) 
\leq n\alpha(\theta_0 - m) + \frac{n \alpha^2}{2} 
\Bigl[ (\theta_0 - m)^2 + v \Bigr] \\ + \frac{1}{2 \beta} +  
\frac{n \beta \alpha^2}{2} 
(\theta_0 - m)^2 - \log( \epsilon).
\end{multline*}
As we have already established that $- n \alpha M_{\alpha}( \theta_0) 
\leq \tint \rho_{\theta_0}(d \theta) W(\theta)$, this completes
the proof of the proposition.

\subsection{Proof of Proposition \thmref{prop1.2}}

It is straightforward to realize that 
$$
\B{E} \Bigl\{ \exp \bigl[ n \alpha M_{\alpha}(\theta_0) 
\bigr] \Bigr\} \leq \biggl\{ 1 - \alpha(\theta_0 - m) 
+ \frac{\alpha^2}{2} \biggl[(\theta_0 - m)^2 + v 
\biggr] + \frac{1}{2 n \beta} \biggr\}^n.
$$
The result then follows as in the previous proof from the exponential
Chebyshev inequality.

\subsection{Proof of Proposition \thmref{prop1.4}}
We will need the following 
elementary lemma.
\begin{lemma}
For any positive real constants $a$ and $c$ such 
that $4 a c \leq 1$, 
\begin{multline*}
\bigl\{ x \in \B{R} : x > a x^2 + c \bigr\} 
\\ = \biggl) \frac{2 c}{1 + \sqrt{1 - 4ac}}, 
\frac{ 1 + \sqrt{1 - 4 ac}}{2a}\biggr( 
\\ \supset \biggl( \frac{c}{1 - 2 ac}, 
\frac{1 - 2ac}{a} \biggr). 
\end{multline*}
\end{lemma}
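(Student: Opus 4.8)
The statement is the purely algebraic
lemma characterising the set $\{x \in \B{R} : x > ax^2 + c\}$ for
positive constants $a,c$ with $4ac \leq 1$. The plan is to solve the
quadratic inequality directly and then prove the stated inclusion by
comparing roots. First I would rewrite the strict inequality
$x > ax^2 + c$ as $ax^2 - x + c < 0$. Since $a > 0$, this is a
downward-open condition on the parabola $g(x) = ax^2 - x + c$, so the
solution set is exactly the open interval between the two real roots of
$g$, provided the discriminant is nonnegative. The discriminant is
$1 - 4ac$, which is $\geq 0$ precisely under the hypothesis $4ac \leq 1$,
so the roots are real and the solution set is a genuine (possibly
degenerate) open interval.

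\textbf{Identifying the endpoints.} The roots of $ax^2 - x + c = 0$ are
$\frac{1 \pm \sqrt{1 - 4ac}}{2a}$. I would then rewrite the smaller root
in the rationalised form that appears in the statement: multiplying
numerator and denominator of $\frac{1 - \sqrt{1-4ac}}{2a}$ by
$1 + \sqrt{1-4ac}$ and using $(1-\sqrt{1-4ac})(1+\sqrt{1-4ac}) = 4ac$
gives the lower endpoint $\frac{2c}{1 + \sqrt{1-4ac}}$. The larger root
is left in the form $\frac{1 + \sqrt{1-4ac}}{2a}$. This establishes the
first displayed equality, with the understanding that the inequality is
strict so the interval is open at both ends.

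\textbf{Proving the inclusion.} For the claimed inclusion
$\bigl)\tfrac{2c}{1+\sqrt{1-4ac}}, \tfrac{1+\sqrt{1-4ac}}{2a}\bigr(
\supset \bigl(\tfrac{c}{1-2ac}, \tfrac{1-2ac}{a}\bigr)$ I would show two
endpoint comparisons. For the lower end I need
$\frac{2c}{1+\sqrt{1-4ac}} \leq \frac{c}{1-2ac}$, i.e. (cancelling $c>0$)
$2(1-2ac) \leq 1 + \sqrt{1-4ac}$, which reduces to
$1 - 4ac \leq \sqrt{1-4ac}$; writing $t = \sqrt{1-4ac} \in [0,1]$ this is
just $t^2 \leq t$, true on $[0,1]$. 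For the upper end I need
$\frac{1-2ac}{a} \leq \frac{1+\sqrt{1-4ac}}{2a}$, i.e.
$2(1-2ac) \leq 1 + \sqrt{1-4ac}$, which is the \emph{same} inequality, so
it follows at once. Thus the smaller interval sits inside the larger one
and the lemma is proved.

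\textbf{Main obstacle.} There is no real obstacle here: the content is
elementary root-finding plus the one-variable estimate $t^2 \leq t$ on
$[0,1]$. The only point requiring a little care is keeping track of the
open/half-open interval notation (the paper writes $)\,,\,($ for open
intervals) and ensuring the rationalisation of the roots is carried out
in the direction that produces exactly the expressions displayed in the
statement, so that the bound is usable verbatim in the proof of
Proposition \thmref{prop1.4}.
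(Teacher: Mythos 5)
Your proof is correct and is exactly the elementary verification the paper intends: the paper states this lemma without proof (calling it elementary), and your argument --- solving $ax^2 - x + c < 0$ between its roots, rationalising the smaller root to $\frac{2c}{1+\sqrt{1-4ac}}$, and reducing both endpoint comparisons to the single inequality $1-4ac \leq \sqrt{1-4ac}$, i.e.\ $t^2 \leq t$ on $[0,1]$ --- is the standard route. The only unstated detail is that $1-2ac \geq 1/2 > 0$ (from $4ac \leq 1$), which justifies the cross-multiplication; this is immediate and does not constitute a gap.
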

Using Proposition \thmref{prop2.1.1} and the previous lemma, 
we see that with probability $1 - \epsilon_2$
\begin{align*}
m & \leq \wh{\theta}_{\alpha} + \frac{2}{(1+\beta)\alpha} 
\varphi \Biggl( 
\frac{(1+\beta)\bigl[n \alpha^2 v + \beta^{-1} - 2 \log(\epsilon_2) \bigr]}{
4n} \Biggr) \\ 
\text{or } \quad
m & \geq  
\wh{\theta}_\alpha + \frac{\bigl[n \alpha^2 v + \beta^{-1} + 2 \log(\epsilon_2^{
-1})\bigr]}{2n\alpha} \\
& \qquad \times \varphi \left( \frac{(1+\beta)\bigl[ n \alpha^2 v 
+ \beta^{-1} + 2 \log( \epsilon_2^{-1}) \bigr]}{4n} \right)^{-1}
\\ & 
 \geq \wh{\theta}_{\alpha} +
\frac{2}{(1+\beta)\alpha} \Biggl( 
1 - \frac{(1+\beta) \bigl[ 
n \alpha^2 v + \beta^{-1} - 2 \log(\epsilon_2) \bigr]}{2n} 
\Biggr).
\end{align*}
Let us make sure that the second condition cannot be fulfilled 
when $\lvert \wt{\theta}_1 - m \rvert \leq \delta_1$, assuming
that 
$$
\epsilon_2 > \exp \Biggl\{ 
- n \biggl[\frac{1}{1+\beta} - \alpha \delta_1 
- \frac{\bigl(n \alpha^2 v + \beta^{-1} \bigr)}{2n} 
\biggr] \Biggr\}, 
$$
or more accurately that 
\begin{multline*}
4 n \alpha \delta_1 < \bigl[ n \alpha^2 v + \beta^{-1} 
+ 2 \log(\epsilon_2^{-1}) \bigr]  \\ 
\times \varphi \left( \frac{(1+\beta) \bigl[ n \alpha^2 v + \beta^{-1} 
+ 2 \log(\epsilon_2^{-1}) \bigr]}{4 n} \right)^{-1}.
\end{multline*}
In this case, with probability at least $1 - \epsilon_2$, 
either $\lvert \wt{\theta}_1 - m \rvert > \delta_1$ or 
$$
m \leq \wh{\theta}_{\alpha} + \frac{2}{(1+\beta)\alpha} 
\varphi \Biggl( 
\frac{(1+\beta)\bigl[n \alpha^2 v + \beta^{-1} - 2 \log(\epsilon) \bigr]}{
4n} \Biggr).
$$
On the other hand, let us consider 
$$
\theta_0 = m + \frac{2}{\alpha} \varphi \biggl( 
\frac{n \alpha^2 v + \beta^{-1} - 2 \log(\epsilon_2)}{4 n} \biggr).
$$
From Proposition \thmref{prop1.2}, with probability at least $1 - \epsilon_2$, 
$M(\theta_0) \leq 0$, and therefore
$$
\wh{\theta}_{\alpha} \leq 
\theta_0
\leq m + \frac{2}{(1+\beta)\alpha} \varphi \biggl( 
\frac{(1+\beta) \bigl[ n \alpha^2 v + \beta^{-1} 
- 2  \log(\epsilon_2) \bigr]}{4n}\biggr). 
$$
Thus with probability at least $1 - 2 \epsilon_2$, either
$\lvert \wt{\theta}_1 - m \rvert > \delta_1$ or 
$$
\lvert \wh{\theta}_\alpha - m \rvert \leq \frac{2}{(1+\beta)\alpha} 
\varphi \left( \frac{(1+\beta) \bigl[ n \alpha^2 v + \beta^{-1} 
- 2 \log(\epsilon_2) \bigr]}{4n} \right).
$$
This proves the first part of the proposition. 
The consequences drawn from special choices of $\alpha$ 
are obvious, except for the last condition which may require 
some verification: when 
$\ds \alpha = \left( \frac{\beta^{-1} - 2 \log(\epsilon_2)}{nv} \right)^{1/2}$, 
putting $\gamma = \beta^{-1} - 2 \log(\epsilon_2)$ 
condition \myeq{eq1.1} becomes
$$
\delta_1 \leq \frac{1}{(1+\beta)} \left( \frac{nv}{\gamma} \right)^{1/2} 
\left( 1 - \frac{(1+\beta) \gamma}{n} \right).
$$
This can also be written as
$$
\frac{(1+\beta)}{n} \gamma + \frac{(1+\beta) \delta_1}{\sqrt{nv}} \sqrt{\gamma} 
- 1 \leq 0,
$$
which is a second order inequality in $\sqrt{\gamma}$.
Considering that $\sqrt{\gamma} \geq 0$, its solution is
$$
\sqrt{\gamma} \leq \frac{2}{\ds \frac{(1+\beta) \delta_1}{\sqrt{nv}} 
+ \sqrt{\frac{(1+\beta)^2 \delta_1^2}{nv} + \frac{4(1+\beta)}{n}}}. 
$$ 
To simplify formulas, we can remark that this inequality is 
satisfied when 
$$
\sqrt{\gamma} \leq \left( \frac{(1+\beta)^2 \delta_1^2}{nv} + \frac{4(1+\beta)}{n} 
\right)^{-1/2},
$$
that is when
$$
\epsilon_2 \geq \exp \left( \frac{1}{2\beta} - 
\frac{n v}{\ds 2 (1+\beta)^2 \delta_1^2 + 8 (1+\beta) v} \right).
$$

\subsection{Proof of Lemma \thmref{lemma5.1}} 

Using the fact that the $L_2$ norm of a sum is less than
the sum of the norms, and the definition of the kurtosis,
we get
\begin{multline*}
\B{E} \bigl( Y^4 \bigr) = \B{E} \Bigl\{ 
\bigl[ (Y - m)^2 +  m(2Y-m) \bigr]^2 \Bigr\}
\\ \leq  \biggl\{ \Bigl\{ \B{E} \bigl[ (Y-m)^4 \bigr]^{1/2} 
+ \lvert m \rvert \B{E} \bigl[ (2 Y - m)^2 \bigr]^{1/2} \biggr\}^2
\\ \leq \biggl\{ \kappa^{1/2} \B{E} \bigl[(Y- m)^2\bigr] 
+ \lvert m \rvert \B{E} \bigl\{ \bigl[ 2 (Y-m) + m \bigr]^2 \bigr\}^{1/2} \biggr\}^2
\\ = \biggl\{ \kappa^{1/2} \bigl(\B{E}(Y^2) - m^2) + \lvert m \rvert \bigl( 
4\B{E}(Y^2) -3m^2\bigr)^{1/2} \biggr\}^2.
\end{multline*}
Introducing $\ds y = \frac{m^2}{\B{E}(Y^2)}$, this gives 
\hfill $\ds
\frac{\B{E}(Y^4)}{\B{E}(Y^2)^2} \leq 
\biggl( \kappa^{1/2} (1 - y) + y^{1/2} (4 - 3 y)^{1/2} \biggr)^2.
$
Let us consider the function $f : (0,1) \mapsto \B{R}$ defined
as 
$$
f(y) = \kappa^{1/2} (1 - y) + y^{1/2}(4 - 3 y)^{1/2}.
$$
It reaches its maximum at point $x$ satisfying $ f'(x) = 0 $,
that is
$$
- \kappa^{1/2} + \frac{1}{2} x^{-1/2}(4-3x)^{1/2} 
- \frac{3}{2} x^{1/2} (4 - 3x)^{-1/2} = 0.
$$
Therefore $x$ satisfies
$\ds
\kappa x(4-3x) = (2-3x)^2$ or $\ds 
3x^2 - 4 x + \frac{4}{\kappa+3} = 0$.
Thus $\ds x = \frac{2}{3} \biggl( 1 - \sqrt{\frac{\kappa}{\kappa+3}} \biggr)$,
and 
$$
\sup_{y \in (0,1)} f(y) = \kappa^{1/2} 
\biggl(\frac{1}{3} + \frac{2}{3} \sqrt{\frac{\kappa}{\kappa+3}} 
\biggr) + 2 \sqrt{\frac{1}{\kappa+3}} = 
\frac{2}{3} \sqrt{\kappa + 3} + \frac{1}{3} \sqrt{\kappa}.
$$
This proves that 
\begin{multline*}
\frac{\B{E}(Y^4)}{\B{E}(Y^2)^2} \leq \frac{1}{9} \Bigl( 
\sqrt{\kappa} + 2 \sqrt{\kappa + 3}\Bigr)^{2} = 
\frac{\kappa}{9} \Biggl( 5 + \frac{12}{\kappa} + 
4\sqrt{1 + \frac{3}{\kappa}} \Biggr) 
\\ \leq \frac{\kappa}{9} \biggl( 5 + \frac{12}{\kappa} 
+ 4 + \frac{6}{\kappa} \Biggr) = \kappa + 2.
\end{multline*}
The same is of course true for $Y-\theta$ for any shift $\theta$,
as a mere change of notations shows, proving the first assertion 
of the lemma. 

Consider now for the lower bound the Bernoulli distribution 
with parameter $p$. In this case
\begin{align*}
\B{E} \bigl[ (Y-m)^2 \bigr] & = p (1-p)^2 + (1-p)p^2 
= p(1-p),\\
\B{E} \bigl[ (Y-m)^4 \bigr] & = p (1-p)^4 + (1-p)p^4 
= p(1-p)(1 - 3p + 3p^2),
\end{align*}
thus 
$$
\kappa = \frac{1 - 3p + 3p^2}{p(1-p)} = 
p^{-1} - 2 + \frac{p}{1-p}.
$$
Moreover $\ds \frac{\B{E}(Y^4)}{\B{E}(Y^2)^2} = p^{-1} \leq c$, 
and thus
$$
c - \kappa \geq 2 - \frac{p}{1-p}.
$$
While $p$ tends to zero, 
this proves that $\ds \sup_{\B{P} \in \C{M}(\B{R})_+^1} 
c_{\B{P}} - \kappa_{\B{P}} 
\geq 2$, and therefore, due to the already proved upper bound, that
$\ds \sup_{\B{P} \in \C{M}_+^1(\B{R})} c_{\B{P}} - \kappa_{\B{P}} = 2$.

When the skewness is null, that is when $\B{E}\bigl[ (Y-m)^3 \bigr]= 0$, 
we can write, assuming without loss of generality that $m = 0$,
\begin{multline*}
\B{E}\bigl[ (Y + \theta )^4 \bigr] = 
\B{E}\bigl(Y^4 \bigr) + 6 \theta^2 \B{E} \bigl( Y^2 \bigr) 
+ \theta^4 \\ = \kappa \B{E} \bigl( Y^2 \bigr)^2 
+ 6 \theta^2 \B{E} \bigl( Y^2 \bigr) + \theta^4 
\\ = \kappa \Bigl\{ \B{E}\bigl[ (Y+\theta)^2 \bigr]  - \theta^2 \Bigr\}^2 
+ 6 \theta^2 \Bigl\{ \B{E}\bigl[ (Y+\theta)^2\bigr]  - \theta^2 \Bigr\} 
+ \theta^4.
\end{multline*}
Thus, introducing $y = \frac{\theta^2}{\B{E}[(Y+\theta)^2]}$, 
we see that
\begin{multline*}
c = \sup_{\theta \in \B{R}} \frac{\B{E}\bigl[ (Y+\theta)^4\bigr] }{
\B{E}\bigl[ (Y+\theta)^2\bigr]^2} = 
\sup_{y \in (0,1)} \kappa(1-y)^2 + 6y(1-y) + y^2
\\ = \sup_{y \in (0,1)} \kappa - 2 (\kappa - 3) y + (\kappa - 5) y^2
= 
\begin{cases}
\ds \kappa + \frac{(3-\kappa)^2}{(5 - \kappa)}, & 1 \leq \kappa \leq 3,\\
\ds \kappa, & \kappa \geq 3.
\end{cases}
\end{multline*}

\subsection{Proof of Proposition \thmref{prop5.2}}

We are going to build a non observable variant of the construction 
made in the proposition, for which the conclusions of the proposition 
are always fulfilled because we enforced them.

Remember that the sequences $\delta_i$, $\gamma_i$ and $\zeta_{2i-1}$ 
take non random values, and let us define 
\begin{align*}
\overline{\theta}_1 & = \theta_1,\\
\overline{q}_1 & = 
\begin{cases}
\ds 
\frac{\delta_1 \exp( - \zeta_1)}{
Q_{\ov{\theta}_1, \delta_1}^{-1} \bigl[-(c-1) \delta_1^2\bigr]},
& \text{when } \ov{q}_1 \text{ thus defined satisfies } \\ & \lvert \log(\ov{q}_1) - \log \bigl[ 
v + (m - \ov{\theta}_1)^2 \bigr] \rvert \leq \zeta_1,\\[1ex]
\ds v + (m - \ov{\theta}_1)^2, & \text{otherwise,}\\
\end{cases}\\
\ov{q}_2 & = \ov{q}_1 \exp( x_2 \zeta_1 U_2),\\
\ov{\alpha}_2 & = \sqrt{\frac{2 \bigl[ \log(\epsilon_2^{-1}) + \gamma_2 
\bigr]}{n \ov{q}_2}},\\
\ov{\zeta}_2 & = \exp \biggl[ \frac{(1+x_2) 
\zeta_1}{2} \biggr] 
\sqrt{\frac{2 \ov{q}_2 \bigl[ \log(\epsilon_2^{-1}) + \gamma_2 \bigr]}{
n}},\\
\ov{\theta}_2 & = \begin{cases}
\wh{\theta}_{\ov{\alpha}_2}(\ov{\theta}_1), & \text{when } 
\ov{\theta}_2 \text{ thus defined satisfies } \\ & \lvert m - \ov{\theta}_2 
\rvert \leq \ov{\zeta}_2,\\
m, & \text{otherwise,}
\end{cases}\\
& \vdots \\
\ov{\theta}_{2i-1} & = \ov{\theta}_{2i-2} + 
\ov{\zeta}_{2i-2} x_{2i-1} U_{2i-1},\\
\ov{q}_{2i-1} & = 
\begin{cases}
\ds \frac{\delta_{2i-1} \exp( - \zeta_{2i-1})}{
Q_{\ov{\theta}_{2i-1}, \delta_{2i-1}} \bigl[ 
- (c-1) \delta_{2i-1}^2 \bigr]},  \text{ when } \ov{q}_{2i-1} \text{ thus
defined satisfies } \\[2ex] \ds ~ \qquad \qquad \qquad \lvert \log (\ov{q}_{2i-1}) 
- \log \bigl[ v + (m - \ov{\theta}_{2i-1})^2 \bigr] \rvert  \leq \zeta_{2i-1},\\[1ex]
v + (m - \ov{\theta}_{2i-1})^2,  \text{ otherwise,}
\end{cases}\\
\ov{q}_{2i} & = \ov{q}_{2i-1} \exp (x_{2i} \zeta_{2i-1} U_{2i}),\\
\ov{\alpha}_{2i} & = 
\exp \biggl[ - \frac{(1 + x_{2i}) \zeta_{2i-1}}{2} 
\biggr] \sqrt{\frac{2 \bigl[ \log(\epsilon_{2i}^{-1}) + 
\gamma_{2i} \bigr]}{n \ov{q}_{2i}}},\\
\ov{\zeta}_{2i} & = \exp \biggl[ \frac{(1+x_{2i}) \zeta_{2i-1}}{2} \biggr] 
\sqrt{\frac{2 \ov{q}_{2i} \bigl[ \log(\epsilon_{2i}^{-1}) + 
\gamma_{2i} \bigr]}{n}},\\
\ov{\theta}_{2i} & = 
\begin{cases} 
\wh{\theta}_{\ov{\alpha}_{2i}}(\ov{\theta}_{2i-1}),& \text{when } 
\ov{\theta}_{2i} \text{ thus defined satisfies}\\
& \lvert m - \ov{\theta}_{2i} \rvert \leq \ov{\zeta}_{2i}.\\
m, & \text{otherwise,}
\end{cases} \\
& \vdots
\end{align*}
By construction, these modified quantities are such that
for any $i =1, \dots, k$,
\begin{gather*}
\lvert m - \ov{\theta}_{2i} \rvert \leq \ov{\zeta}_{2i},\\
\bigl\lvert \log(\ov{q}_{2i-1}) - \log \bigl[ v + (m - \ov{\theta}_{2i-1})^2 
\bigr] \bigr\rvert \leq \zeta_{2i-1}.
\end{gather*}
Let us defined ``the modified sequence''
\begin{align*}
S_{2j-1} & = \Bigl\{ \bigl(\ov{\theta}_{2i-1}\bigr)_{i=2}^j, \bigl[ 
\log (\ov{q}_{2i}) \bigr]_{i=1}^{j-1} \Bigr\} = S_{2j-2} 
\cup \{ \ov{\theta}_{2K-1} \},\\
S_{2j} & = \Bigl\{ \bigl(\ov{\theta}_{2i-1}\bigr)_{i=2}^j, \bigl[ 
\log (\ov{q}_{2i}) \bigr]_{i=1}^{j} \Bigr\} = S_{2j-1} \cup \{ 
\log (\ov{q}_{2j}) \}.
\end{align*}
The first step of the proof will be to prove by induction on $j$ 
the following lemma. 

\begin{lemma}
There exists some prior distribution $\pi_{j}$ on the modified sequence
$S_j$ (that is some non random probability measure on $\B{R}^{j-1}$) such 
that the joint conditional distribution $\rho_{j}$ of 
the modified sequence $S_j$ knowing the sample $(Y_i)_{i=1}^n$
is such that $\ds \log \biggl( \frac{d \rho_{j}}{d \pi_{j}} \biggr) \leq \gamma_{j}$.
\end{lemma}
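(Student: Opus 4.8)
The plan is to prove the lemma by induction on $j$, building $\pi_j$ one coordinate at a time so that it factorises as a product of transition kernels matching the stepwise definition of the modified sequence. The base case $j = 1$ is vacuous: $S_1$ lives in $\B{R}^0$, $\gamma_1 = 0$, and $\rho_1 = \pi_1$ is the trivial measure, so $\log(d\rho_1/d\pi_1) = 0 = \gamma_1$. For the inductive step I would disintegrate $\rho_j$ (the law of $S_j$ conditional on the sample) as $\rho_j(dS_{j-1}, ds_j) = \rho_{j-1}(dS_{j-1})\, K_j(S_{j-1}, ds_j)$, where $s_j$ is the single new entry and $K_j$ its conditional law. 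Since $S_j$ is obtained from $S_{j-1}$ by adding one fresh uniform perturbation $U_j$, independent of the sample and of everything already drawn, $K_j(S_{j-1}, \cdot)$ is, conditionally on the sample and on $S_{j-1}$, uniform on an interval centred at the current value, of half-width determined by $S_{j-1}$.

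The two kinds of steps are treated symmetrically. At an even step $j = 2i$ the new entry is $\log(\ov{q}_{2i}) = \log(\ov{q}_{2i-1}) + x_{2i}\zeta_{2i-1}U_{2i}$; here $\zeta_{2i-1}$ is non random, the centre $\log(\ov{q}_{2i-1})$ depends on the sample but is pinned by the modification to satisfy $\bigl\lvert \log(\ov{q}_{2i-1}) - \log[v + (m - \ov{\theta}_{2i-1})^2]\bigr\rvert \leq \zeta_{2i-1}$, and the reference point $\log[v + (m - \ov{\theta}_{2i-1})^2]$ is a function of the entry $\ov{\theta}_{2i-1}$ of $S_{2i-1}$. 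At an odd step $j = 2i-1$ the new entry is $\ov{\theta}_{2i-1} = \ov{\theta}_{2i-2} + \ov{\zeta}_{2i-2}x_{2i-1}U_{2i-1}$; the half-width carries $\ov{\zeta}_{2i-2}$, which although random is a function of the entry $\log(\ov{q}_{2i-2})$ of $S_{2i-2}$, while the modification forces $\lvert m - \ov{\theta}_{2i-2}\rvert \leq \ov{\zeta}_{2i-2}$ about the reference point $m$. In each case I would take for the prior kernel $\tilde{K}_j(S_{j-1}, \cdot)$ the uniform law on the interval centred at the reference point ($m$, respectively $\log[v + (m-\ov{\theta}_{2i-1})^2]$) and of half-width $(1 + x_j)$ times the conditional half-width. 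This is a genuine, non random function of the real entries of $S_{j-1}$ alone, so $\pi_j = \pi_{j-1} \otimes \tilde{K}_j$ is again non random.

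It then remains to compute the density. Writing $w = \zeta_{2i-1}$ at an even step and $w = \ov{\zeta}_{2i-2}$ at an odd step, the control inequalities guarantee that the support of $K_j(S_{j-1}, \cdot)$, an interval of half-width $x_j w$ whose centre lies within $w$ of the reference point, is contained in the support of $\tilde{K}_j(S_{j-1}, \cdot)$, of half-width $(1 + x_j)w$. The ratio of two uniform densities on nested intervals is the ratio of their lengths, so $dK_j/d\tilde{K}_j = 1 + x_j^{-1}$, $K_j$-almost everywhere, independently of the sample-dependent centre. Multiplying by the inductive bound on $d\rho_{j-1}/d\pi_{j-1}$ gives
$$
\frac{d\rho_j}{d\pi_j} = \frac{d\rho_{j-1}}{d\pi_{j-1}}\bigl(1 + x_j^{-1}\bigr) \leq \exp(\gamma_{j-1})\bigl(1 + x_j^{-1}\bigr) = \exp(\gamma_j),
$$
using $\gamma_j = \gamma_{j-1} + \log(1 + x_j^{-1})$, which closes the induction.

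The main obstacle is not this density computation, which is the elementary ratio of nested uniform laws, but verifying that the prior kernels are non random — that is, carefully tracking which ingredients depend on the sample and which depend only on the already-built entries of $S_{j-1}$. The delicate point is that the \emph{centres} of the perturbations ($\ov{\theta}_{2i-2}$ and $\log(\ov{q}_{2i-1})$) are sample dependent, whereas the \emph{half-widths} and \emph{reference centres} are not: at odd steps $\ov{\zeta}_{2i-2}$ is recovered from the entry $\log(\ov{q}_{2i-2})$ and the reference is the constant $m$, while at even steps $\zeta_{2i-1}$ is deterministic and the reference is read off from the entry $\ov{\theta}_{2i-1}$. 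It is precisely the modifications — the ``otherwise'' branches that enforce the control inequalities — that make this work unconditionally, so that the $\gamma_j$ bound holds for every sample rather than only on a high-probability event.
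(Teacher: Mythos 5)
Your proposal is correct and follows essentially the same route as the paper's proof: a coordinate-by-coordinate induction in which the prior kernel is uniform on an interval centred at the non-random reference point ($m$ at odd steps, $\log[v+(m-\ov{\theta}_{2i-1})^2]$ at even steps) with half-width $(1+x_j)w$, so that the enforced control inequalities give support containment, a conditional density ratio of $1+x_j^{-1}$, and hence $d\rho_j/d\pi_j \leq \exp(\gamma_{j-1})(1+x_j^{-1}) = \exp(\gamma_j)$. One wording slip only: you first describe the prior half-width as ``$(1+x_j)$ times the conditional half-width,'' which would read as $(1+x_j)x_j w$, but your subsequent computation correctly uses $(1+x_j)w$, which is what the containment and the ratio $1+x_j^{-1}$ require.
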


\begin{proof}
Indeed, assuming that this is true for $2j-2$, we build $\pi_{2j-1}$ 
and $\pi_{2j}$ 
from $\pi_{2j-2}$ by deciding that $\pi_{2j-2}$ is the marginal 
of $\pi_{2j-1}$ on $S_{2j-2}$ and that $\pi_{2j-1}$ is the 
marginal of $\pi_{2j}$ on $S_{2j-1}$. 
We complete the definition of $\pi_{2j-1}$ by defining the conditional 
distribution of $\ov{\theta}_{2j-1}$ 
knowing $S_{2j-2}$ {\em under $\pi_{2j-1}$} 
as the uniform probability distribution on 
the interval 
$$
m + (1+x_{2j-1})\ov{\zeta}_{2j-2}\times(-1,+1).
$$ 
Similarly we complete the definition of $\pi_{2j}$ by defining 
the conditional distribution of $\log(\ov{q}_{2j})$ knowing 
$S_{2j-2} \cup \{ \ov{\theta}_{2j-1} \}$
as the uniform probability distribution on the interval
$$
\log \bigl[ v + 
(m - \ov{\theta}_{2j-1})^2 \bigl] + (1+x_{2j}) \zeta_{2j-1} 
\times (-1,+1).
$$
As the conditional distribution of $\ov{\theta}_{2j-1}$ 
and $\log ( \ov{q}_{2j})$ knowing $S_{2j-2}$ and the sample
$(Y_i)_{i=1}^n$ is the product of the uniform probability 
measure on the interval 
$$
\ov{\theta}_{2j-2} + \ov{\zeta}_{2j-2} x_{2j-1}
\times (-1,+1)
$$
and the uniform probability measure on the interval
$$
\log(\ov{q}_{2j-1}) + \zeta_{2j-1} x_{2j} \times (-1, +1),
$$
it is readily seen that
$\ds
\frac{d \rho_{2j-1}( \cdot | S_{2j-2})}{d \pi_{2j-1}( \cdot | S_{2j-2})}  
= 1 + x_{2j-1}^{-1}$ on 
the support of $\rho_{2j-1}( \cdot | S_{2j-2})$. Using the 
induction hypothesis we deduce that 
$$
\frac{d \rho_{2j-1}}{d \pi_{2j-1}} = \frac{d \rho_{2j-2}}{d \pi_{2j-2}} 
\times \frac{d \rho_{2j-1}( \cdot | S_{2j-2})}{d \pi_{2j-1}(\cdot | 
S_{2j-2})} \leq \exp (\gamma_{2j-2}) (1 + x_{2j-1}^{-1}) 
= \exp( \gamma_{2j-1}).
$$
We deduce in the same way that 
$$
\frac{d \rho_{2j}}{d \pi_{2j}} = \frac{d \rho_{2j-1}}{d \pi_{2j-1}} 
\times \frac{ d \rho_{2j}(\cdot|S_{2j-1} )}{ 
d \pi_{2j}(\cdot|S_{2j-1})} 
\leq \exp( \gamma_{2j}).
$$
Moreover the first step is easy to prove, taking for 
$\pi_1$ the uniform probability measure on the interval
$$
\log \bigl[ v + (m - \theta_1)^2 \bigr] + \zeta_1 x_2 \times (-1, +1).
$$
This achieves to prove the lemma by induction.
\end{proof}
Let us now proceed with a second lemma.

\begin{lemma}
\label{lemma9.4}
With probability at least $1 - 2 \epsilon_{2i-1}$, 
$$
\ov{q}_{2i-1} = \frac{\delta_{2i-1} \exp(- \zeta_{2i-1})}{Q_{\ov{\theta}_{2i-1}, 
\delta_{2i-1}}^{-1} \bigl[ -(c-1) \delta_{2i-1}^2 \bigr]}.
$$
\end{lemma}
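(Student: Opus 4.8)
The plan is to show that the ``natural'' definition $\ov{q}_{2i-1} = \delta_{2i-1}\exp(-\zeta_{2i-1})/Q_{\ov{\theta}_{2i-1},\delta_{2i-1}}^{-1}\bigl[-(c-1)\delta_{2i-1}^2\bigr]$ automatically satisfies the constraint $\bigl\lvert \log(\ov{q}_{2i-1}) - \log[v+(m-\ov{\theta}_{2i-1})^2]\bigr\rvert \le \zeta_{2i-1}$ with probability at least $1-2\epsilon_{2i-1}$; on that event the ``otherwise'' branch in the definition of $\ov{q}_{2i-1}$ is never taken, which is exactly the claimed identity. To lighten notation I write $\theta=\ov{\theta}_{2i-1}$, $\delta=\delta_{2i-1}$, $\zeta=\zeta_{2i-1}$, $s=v+(m-\theta)^2=\E\bigl[(Y-\theta)^2\bigr]$, and $\alpha^\ast=Q_{\theta,\delta}^{-1}\bigl[-(c-1)\delta^2\bigr]$. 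Since $\alpha\mapsto Q_{\theta,\delta}(\alpha)$ is increasing, the constraint $\lvert\log(\delta e^{-\zeta}/\alpha^\ast)-\log s\rvert\le\zeta$ is equivalent to the pair $\alpha^\ast\le\delta/s$ and $\alpha^\ast\ge\delta e^{-2\zeta}/s$, which by monotonicity of $Q_{\theta,\delta}$ amount to
\[ Q_{\theta,\delta}(\delta/s)\ge -(c-1)\delta^2 \quad\text{and}\quad Q_{\theta,\delta}(\delta e^{-2\zeta}/s)\le -(c-1)\delta^2. \]
So the whole statement reduces to these two one-sided deviation bounds, each holding with probability $1-\epsilon_{2i-1}$, after which a union bound gives $1-2\epsilon_{2i-1}$.

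For each bound I would run the exponential-moment argument on the single-observation function $g_{\theta,\alpha}(y)=\log\bigl(1+u+\tfrac{u^2}{2}\bigr)$ with $u=\alpha(y-\theta)^2-\delta$, as in the proof of Proposition \thmref{prop3.1}. Here $\exp(g_{\theta,\alpha})=1+u+\tfrac{u^2}{2}$, so $\E[\exp g_{\theta,\alpha}]=1+(\alpha s-\delta)+\tfrac12\E\bigl[(\alpha(Y-\theta)^2-\delta)^2\bigr]$, while Equation \myeq{eq9.1.2} gives $\E[\exp(-g_{\theta,\alpha})]\le\E\bigl[1-u+\tfrac{u^2}{2}\bigr]$. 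Replacing $\E[(Y-\theta)^4]$ by its upper bound $c\,s^2$ from the definition of the uniform kurtosis, and evaluating at $\alpha=\delta/s$ (where the linear term $\alpha s-\delta$ vanishes), gives $\E[\exp(-g_{\theta,\delta/s})]\le 1+\tfrac12(c-1)\delta^2$, hence $-\log\E[\exp(-g_{\theta,\delta/s})]\ge-\tfrac12(c-1)\delta^2$ by $\log(1+t)\le t$. The randomness of $\theta=\ov{\theta}_{2i-1}$ is absorbed by the PAC-Bayesian device already prepared: I integrate $\exp$ of the centered process $\sum_{i=1}^n g_{\theta,\alpha(\theta)}(Y_i)-n\log\E[\exp g_{\theta,\alpha(\theta)}]$ (and its sign-flipped analogue) at the $\theta$-dependent point $\alpha(\theta)=\delta/s(\theta)$ against the posterior $\rho_{2i-1}$, bound $d\rho_{2i-1}/d\pi_{2i-1}\le\exp(\gamma_{2i-1})$ by the preceding lemma, apply Fubini to the non-random prior $\pi_{2i-1}$ — for which the centered process has unit exponential moment for every fixed value of $\theta$ — and close with the exponential Chebyshev inequality. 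Uniformly in the random $\theta$ and with probability $1-\epsilon_{2i-1}$ this yields $Q_{\theta,\delta}(\alpha)\ge-\log\E[\exp(-g)]-\tfrac{\log(\epsilon_{2i-1}^{-1})+\gamma_{2i-1}}{n}$. Because the definition of $\delta_{2i-1}$ makes $\tfrac{\log(\epsilon_{2i-1}^{-1})+\gamma_{2i-1}}{n}=\tfrac12(c-1)\delta^2$, evaluating at $\alpha=\delta/s$ collapses this to $Q_{\theta,\delta}(\delta/s)\ge-(c-1)\delta^2$, the first required inequality.

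The delicate half is the upper bound $Q_{\theta,\delta}(\delta e^{-2\zeta}/s)\le-(c-1)\delta^2$, where the ancillary function $h$ and the smallness hypothesis on $\delta_{2i-1}$ come into play. Writing $w=e^{-2\zeta}$ and using the $+g$ computation at $\alpha=\delta w/s$, the kurtosis bound gives $\E[\exp g_{\theta,\alpha}]-1\le\delta(w-1)+\tfrac{\delta^2}{2}\bigl(cw^2-2w+1\bigr)$; adding the penalty $\tfrac12(c-1)\delta^2$ and requiring the total to be $\le-(c-1)\delta^2$ turns, via $\log(1+t)\le t$, into a quadratic inequality in $w$. I expect $\zeta=\zeta_{2i-1}$ to be defined precisely so that $1-w=h\bigl[\tfrac{c}{c-1},(c-1)\delta\bigr]$ is the admissible root of this quadratic, consistent with $\zeta_{2i-1}=-\tfrac12\log\{1-h[\tfrac{c}{c-1},(c-1)\delta_{2i-1}]\}$. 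The discriminant $1-4ay^2/(1+y)^2$ in $h$, with $a=\tfrac{c}{c-1}$ and $y=(c-1)\delta$, is nonnegative exactly when $(1+y)\ge 2\sqrt{a}\,y$, i.e. when $(c-1)\delta\le(2\sqrt{a}-1)^{-1}$, which rearranges to $\delta\le\bigl(2\sqrt{c(c-1)}-(c-1)\bigr)^{-1}$ — precisely the hypothesis $\max_i\delta_{2i-1}\le\bigl(2\sqrt{c(c-1)}-(c-1)\bigr)^{-1}$. This condition simultaneously keeps $w$ real with $w\in(0,1)$ and keeps $1+u+\tfrac{u^2}{2}$ positive along the computation, so every logarithm is legitimate. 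The main obstacle is thus exactly this algebraic verification — confirming that the root singled out by $h$ makes the quadratic inequality hold with the correct sign, and that the discriminant condition is what the hypothesis on $\delta_{2i-1}$ supplies — rather than anything structural. Once it is in place, each one-sided inequality holds with probability $1-\epsilon_{2i-1}$, and a union bound delivers the conclusion with probability at least $1-2\epsilon_{2i-1}$.
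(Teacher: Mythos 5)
Your proposal is correct and takes essentially the same route as the paper: the reduction, via monotonicity of $\alpha \mapsto Q_{\theta,\delta}(\alpha)$, to the two one-sided bounds $Q_{\theta,\delta}(\delta/s) \geq -(c-1)\delta_{2i-1}^2$ and $Q_{\theta,\delta}(\delta e^{-2\zeta}/s) \leq -(c-1)\delta_{2i-1}^2$ is exactly the paper's pair of inequalities \myeq{eq9.3} and \myeq{eq9.4}, which the paper also proves by the truncated exponential moment bounds (using Equation \myeq{eq9.1.2} for the reversed sign), the uniform kurtosis bound, the PAC-Bayesian density ratio bound $d\rho_{2i-1}/d\pi_{2i-1} \leq \exp(\gamma_{2i-1})$ with a $\theta$-dependent choice of $\alpha$, and the same quadratic whose admissible root $y = -\delta_{2i-1} h\bigl[\tfrac{c}{c-1},(c-1)\delta_{2i-1}\bigr]$ exists precisely under the hypothesis $\delta_{2i-1} \leq \bigl(2\sqrt{c(c-1)}-(c-1)\bigr)^{-1}$. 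The algebraic verifications you flag as the main obstacle (the identity $[\log(\epsilon_{2i-1}^{-1})+\gamma_{2i-1}]/n = (c-1)\delta_{2i-1}^2/2$, the quadratic in $w$, and the matching of the discriminant condition with the stated hypothesis) all check out.
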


\begin{proof}
Let us remark that
\begin{multline*}
\B{E} \Bigl\{ \exp \bigl[ n Q_{\theta, \delta_{2i-1}}
(\alpha)  \bigr] \Bigr\}
\leq \exp \biggl\{ n \alpha \bigl[v + (\theta - m)^2 \bigr] 
- \delta_{2i-1} \\ + \frac{n}{2} \B{E} 
\Bigl\{ \alpha(Y_i - \theta)^2 - \alpha \bigl[ v 
+ (\theta - m)^2 \bigr] \Bigr\}^2 
+ \frac{n}{2} \Bigl\{ \alpha \bigl[ v + (\theta - m)^2 \bigr]  - 
\delta_{2i-1} \Bigr\}^2 \biggr\}
\\ \leq  \exp \bigl[ n g(\theta) \bigr],
\end{multline*}
where
\begin{multline*}
g(\theta) = \alpha \bigl[v + (\theta - m)^2 \bigr] - \delta_{2i-1} 
+ \frac{(c-1) \alpha^2}{2} \bigl[ v + (\theta - m)^2 \bigr]^2 
\\ + \frac{1}{2} \Bigl\{ \alpha \bigl[ v + (\theta - m)^2 \bigr]
- \delta_{2i-1} \Bigr\}^2.
\end{multline*}
Integrating the previous exponential moment with respect to 
$\rho_{2i-1}$, and taking expectations with respect to 
the distribution of the sample $(Y_i)_{i=1}^n$,
we get, for any measurable mapping $S_{2i-1} \mapsto \alpha(S_{2i-1})$ 
to be chosen afterward,
\begin{multline*}
\B{E} \Bigl\{ \tint \rho_{2i-1} (d S_{2i-1})  
\exp \Bigl[ n Q_{\ov{\theta}_{2i-1}, \delta_{2i-1}}(\alpha) - 
ng(\ov{\theta}_{2i-1}) - 
\gamma_{2i-1} \Bigr] 
\Bigr\}
\\ \shoveleft{\quad \leq \B{E} \Bigl\{ \tint \rho_{2i-1} (d S_{2i-1})  
\B{1} \Bigl( \frac{ d \rho_{2i-1}}{d \pi_{2i-1}} 
> 0 \Bigr)} \\ \times \exp \Bigl[ n Q_{\ov{\theta}_{2i-1}, \delta_{2i-1}}(\alpha) - ng(\ov{\theta}_{2i-1}) - 
\log \Bigl( \tfrac{d \rho_{2i-1}}{d \pi_{2i-1}} \Bigr) \Bigr] 
\Bigr\} \\
\leq \B{E} \Bigl\{ \tint \pi_{2i-1}(d S_{2i-1}) \exp 
\Bigl[ n Q_{\ov{\theta}_{2i-1}, \delta_{2i-1}}(\alpha) 
- n g(\ov{\theta}_{2i-1}) \Bigr] \Bigr\}
\\ = \tint \pi_{2i-1}(d S_{2i-1}) \B{E} \Bigl\{ 
\exp \Bigl[ n Q_{\ov{\theta}_{2i-1}, \delta_{2i-1}}(\alpha) - n 
g(\ov{\theta}_{2i-1})  \Bigr] 
\Bigr\} \leq 1. 
\end{multline*}
(Let us remember that $\ov{\theta}_{2i-1}$ is the last component 
of $S_{2i-1}$. We made some dependences explicit, but not all of 
them, and more specifically the dependence of $\alpha$ here has been 
kept hidden.)

Using Chebyshev's exponential inequality, we deduce from 
this moment inequality that, for any measurable 
mapping $\ov{\theta}_{2i-1} \mapsto \alpha(\ov{\theta}_{2i-1})$, 
(that is for any choice of $\alpha$ which 
may depend on the value of $\ov{\theta}_{2i-1}$), 
with probability at least $1 - \epsilon_{2i-1}$, 
$$
Q_{\ov{\theta}_{2i-1}, \delta_{2i-1}} \bigl[ \alpha(\ov{\theta}_{2i-1}) 
\bigr] \leq g(\ov{\theta}_{2i-1})
+ \frac{\gamma_{2i-1} + \log(\epsilon_{2i-1}^{-1})}{n}.
$$

It is useful at this point to realize that the mapping  $\alpha \mapsto Q_{\theta, \delta}(\alpha)$
is increasing for any $\theta \in \B{R}$ 
and $\delta \in )0,1)$, as its derivative shows
$$
Q'_{\theta, \delta}(\alpha) = \frac{1}{n} \sum_{i=1}^n
\frac{(1-\delta)(Y_i-\theta)^2 + \alpha (Y_i-\theta)^4}{1 
+ \alpha(Y_i-\theta)^2 + \frac{1}{2} \bigl[ (Y_i-\theta)^2 - 
\delta \bigr]^2} \geq 0.
$$ 
In order to choose $\alpha$ (which is allowed to depend on
$\ov{\theta}_{2i-1}$), let us introduce temporarily $y = \alpha \bigl[ v 
+ (\ov{\theta}_{2i-1} - m)^2 \bigr] - \delta_{2i-1}$.
We can rephrase what we just proved saying that with probability at 
least $1 - \epsilon_{2i-1}$,
\begin{multline*}
\frac{y + \delta_{2i-1}}{v + (\ov{\theta}_{2i-1} - m)^2} 
\leq Q_{\ov{\theta}_{2i-1},\delta_{2i-1}}^{-1} \biggl[
y + \frac{(c-1)(y + \delta_{2i-1})^2}{2}  
+ \frac{y^2}{2} + \frac{(c-1) \delta_{2i-1}^2}{2} \biggr]  
\\ = 
Q_{\ov{\theta}_{2i-1},\delta_{2i-1}}^{-1} \biggl[ 
\frac{c y^2}{2} + \bigl[(c-1) \delta_{2i-1} + 1 \bigr] y 
+ (c-1) \delta_{2i-1}^2 \biggr]. 
\end{multline*}
Let us choose $y$ such that the argument of $Q_{\ov{\theta}_{2i-1}, 
\delta_{2i-1}}^{-1}$ in this inequality is equal 
to $-(c-1) \delta_{2i-1}^2$. This requires that $y$ should satisfy
$$
\frac{c y^2}{2} + \bigl[ (c-1) \delta_{2i-1} + 1 \bigr] 
y + 2 (c-1) \delta_{2i-1}^2  = 0.
$$
This has (negative) real roots when 
$$
\delta_{2i-1} \leq \frac{1}{2 \sqrt{c(c-1)} - (c-1)},
$$
and it is elementary to check that the largest of these negative roots
is 
$$
y = - \delta_{2i-1} h \bigl[ \tfrac{c}{c-1}, (c-1) \delta_{2i-1} \bigr] 
= - \bigl[ 1 - \exp( - 2 \zeta_{2i-1} ) \bigr] \delta_{2i-1}. 
$$
Thus with probability at least $1 - \epsilon_{2i-1}$, 
\begin{equation}
\label{eq9.3}
\frac{\exp( - 2 \zeta_{2i-1} ) \delta_{2i-1}}{v + (\ov{\theta}_{2i-1} 
- m)^2} \leq Q_{\ov{\theta}_{2i-1}, \delta_{2i-1}}^{-1} \bigl[ - (c-1) 
\delta_{2i-1}^2 \bigr].
\end{equation}
To get the reverse inequality, we may notice, due to Equation 
\myeq{eq9.1.2} that 
$$
- Q_{\theta, \delta}(\alpha) \leq \frac{1}{n} 
\sum_{i=1}^n \log \biggl\{ 1 - \alpha(Y_i-\theta)^2 + \delta 
+ \frac{1}{2} \Bigl[ \alpha(Y_i - \theta)^2 - \delta \Bigr]^2 
\biggr\}.
$$
Consequently 
$$
\B{E} \Bigl\{ \exp \bigl[ - n Q_{\theta, \delta}(\alpha) \bigr] 
\Bigr\} \leq \exp \bigl[ n \ov{g}(\theta) \bigr],
$$
where 
\begin{multline*}
\ov{g}(\theta) = \exp \biggl\{ n \delta - n \alpha \bigl[v + (\theta - m)^2 \bigr] 
\\ + \frac{(c-1) \alpha^2}{2} \bigl[ v + (\theta - m)^2 \bigr]^2 
+ \frac{1}{2} \Bigl\{ \alpha \bigl[ v + (\theta -m)^2 \bigr] - 
\delta \Bigr\}^2 \biggr\}.
\end{multline*}
Thus, integrating with respect to $\pi_{2i-1}$ as previously, we deduce 
that with probability at least $1 - \epsilon_{2i-1}$,
$$
- \ov{g}(\ov{\theta}_{2i-1}) - \frac{\gamma_{2i-1} + 
\log(\epsilon_{2i-1}^{-1})}{n} =  
- \ov{g}(\ov{\theta}_{2i-1}) 
- \frac{(c-1)}{2} \delta_{2i-1}^2 \leq Q_{\ov{\theta}_{2i-1},\delta_{2i-1}}
(\alpha), 
$$
where the choice of $\alpha$ may depend on $\ov{\theta}_{2i-1}$. 
Choosing then 
$\ds \alpha = \frac{\delta_{2i-1}}{v + (\ov{\theta}_{2i-1} - m)^2}$,
we get that with probability at least $1 - \epsilon_{2i-1}$, 
\begin{equation}
\label{eq9.4}
Q_{\ov{\theta}_{2i-1}, \delta_{2i-1}}^{-1} \bigl[-(c-1) \delta_{2i-1}^2 
\bigr] \leq \frac{\delta_{2i-1}}{v + (\ov{\theta}_{2i-1} - m)^2}.
\end{equation}
Taking the union bound of inequalities \myeq{eq9.3} and \myeq{eq9.4}, 
we see that with probability at least $1 - 2 \epsilon_{2i-1}$, 
$$
\frac{\exp( - 2 \zeta_{2i-1}) \delta_{2i-1}}{v + (\ov{\theta}_{2i-1} 
- m)^2} \leq Q_{\ov{\theta}_{2i-1}, \delta_{2i-1}}^{-1} \bigl[ - (c-1) 
\delta_{2i-1}^2 \bigr] \leq \frac{\delta_{2i-1}}{v + (\ov{\theta}_{2i-1} 
-m)^2}.
$$
This can be rewritten as
$$
\left\lvert \log \bigl[ v + (\ov{\theta}_{2i-1} - m)^2 \bigr] 
- \log \biggl[ \frac{\delta_{2i-1} \exp( - \zeta_{2i-1}) }{
Q_{\ov{\theta}_{2i-1}, \delta_{2i-1}}^{-1} 
\bigl[ -(c-1) \delta_{2i-1}^2 \bigr] } \biggr] \right\rvert 
\leq \zeta_{2i-1}.
$$
Therefore, coming back to the definition of $\ov{q}_{2i-1}$, 
we see that, with probability at least $1 - 2 \epsilon_{2i-1}$,
$$
\ov{q}_{2i-1} = \frac{\delta_{2i-1} \exp( - \zeta_{2i-1})}{ 
Q_{\ov{\theta}_{2i-1}, \delta_{2i-1}}^{-1} \bigl[ - (c-1) 
\delta_{2i-1}^2 \bigr]}.
$$
\end{proof}

\begin{lemma}
\label{lemma9.5}
With probability at least $1 - 2 \epsilon_{2i}$, $\ov{\theta}_{2i} = 
\wh{\theta}_{\ov{\alpha}_{2i}}(\ov{\theta}_{2i-1})$.
\end{lemma}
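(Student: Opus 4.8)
The plan is to reduce the statement to a one-sided deviation bound and then recycle the PAC-Bayesian argument already used to prove Proposition \thmref{prop2.1}. By the very definition of $\ov{\theta}_{2i}$ in the modified construction, the identity $\ov{\theta}_{2i} = \wh{\theta}_{\ov{\alpha}_{2i}}(\ov{\theta}_{2i-1})$ holds exactly on the event $\bigl\{ \lvert m - \wh{\theta}_{\ov{\alpha}_{2i}}(\ov{\theta}_{2i-1}) \rvert \leq \ov{\zeta}_{2i} \bigr\}$. So it suffices to show that this event has probability at least $1 - 2 \epsilon_{2i}$, which I will do by controlling the two one-sided deviations separately and taking a union bound at the end.

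For the upper deviation I would start, as in the proof of Proposition \thmref{prop2.1}, from $T \leq T_+$, giving $\wh{\theta}_{\ov{\alpha}_{2i}}(\ov{\theta}_{2i-1}) \leq \ov{\theta}_{2i-1} + (n\ov{\alpha}_{2i})^{-1}\sum_{k=1}^n T_+\bigl[\ov{\alpha}_{2i}(Y_k - \ov{\theta}_{2i-1})\bigr]$, and introduce the empirical process
$$
X(S_{2i}) = \sum_{k=1}^n T_+\bigl[\ov{\alpha}_{2i}(Y_k - \ov{\theta}_{2i-1})\bigr] - n\log\Bigl(1 + \ov{\alpha}_{2i}(m - \ov{\theta}_{2i-1}) + \tfrac{\ov{\alpha}_{2i}^2}{2}\bigl[v + (m - \ov{\theta}_{2i-1})^2\bigr]\Bigr).
$$
Since both $\ov{\alpha}_{2i}$ and $\ov{\theta}_{2i-1}$ are deterministic measurable functions of the modified sequence $S_{2i}$, freezing $S_{2i}$ under the prior $\pi_{2i}$ freezes the pair $(\ov{\theta}_{2i-1}, \ov{\alpha}_{2i})$, so the exponential-moment computation of the proof of Proposition \thmref{prop3.1} applies verbatim and yields $\B{E}\{\exp[X(S_{2i})]\} = 1$ with the true variance $v$. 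Integrating against $\rho_{2i}$, using Fubini and the density bound $\log(d\rho_{2i}/d\pi_{2i}) \leq \gamma_{2i}$ established above, gives $\B{E}\{\tint \rho_{2i}(dS_{2i})\exp[X(S_{2i}) - \gamma_{2i}]\} \leq 1$, and exponential Chebyshev then yields, with probability at least $1 - \epsilon_{2i}$, $X(S_{2i}) \leq \gamma_{2i} + \log(\epsilon_{2i}^{-1})$. Combined with $\log(1+u) \leq u$, this produces
$$
\wh{\theta}_{\ov{\alpha}_{2i}}(\ov{\theta}_{2i-1}) - m \leq \frac{\ov{\alpha}_{2i}}{2}\bigl[v + (m - \ov{\theta}_{2i-1})^2\bigr] + \frac{\log(\epsilon_{2i}^{-1}) + \gamma_{2i}}{n\ov{\alpha}_{2i}}.
$$

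It then remains to check that the right-hand side is at most $\ov{\zeta}_{2i}$, which is a purely algebraic calibration. Writing $L = \log(\epsilon_{2i}^{-1}) + \gamma_{2i}$ and $E = \exp[(1+x_{2i})\zeta_{2i-1}/2]$, the definitions of $\ov{\alpha}_{2i}$ and $\ov{\zeta}_{2i}$ give $(n\ov{\alpha}_{2i})^{-1}L = \tfrac{1}{2} \ov{\zeta}_{2i}$, so the remaining requirement reduces to $v + (m - \ov{\theta}_{2i-1})^2 \leq E^2 \ov{q}_{2i}$. Here I would invoke the always-true construction bound $\lvert \log(\ov{q}_{2i-1}) - \log[v + (m - \ov{\theta}_{2i-1})^2]\rvert \leq \zeta_{2i-1}$ together with $\ov{q}_{2i} = \ov{q}_{2i-1}\exp(x_{2i}\zeta_{2i-1}U_{2i}) \geq \ov{q}_{2i-1}\exp(-x_{2i}\zeta_{2i-1})$, valid because $U_{2i} \in (-1, +1)$; multiplying these gives $E^2 \ov{q}_{2i} \geq \exp(\zeta_{2i-1})\ov{q}_{2i-1} \geq v + (m - \ov{\theta}_{2i-1})^2$, exactly as needed. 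The lower deviation follows identically after replacing $Y_k - \ov{\theta}_{2i-1}$ by $\ov{\theta}_{2i-1} - Y_k$ and using the symmetry of $T$, and a union bound over the two events closes the argument with probability at least $1 - 2\epsilon_{2i}$.

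The main obstacle is the correct handling of the data dependence of the truncation width $\ov{\alpha}_{2i}$: unlike in Proposition \thmref{prop2.1}, where only the centering point was random, here $\ov{\alpha}_{2i}$ is itself a random function of the sample through $\ov{q}_{2i}$. The PAC-Bayesian bookkeeping absorbs this precisely because $\ov{\alpha}_{2i}$ is $S_{2i}$-measurable, so the exponential-moment identity holds for each fixed value of the prior variable and the posterior-to-prior gap is paid for by the single factor $\exp(\gamma_{2i})$. The one genuinely delicate point is that the calibration in the last step must lean on the \emph{deterministic}, construction-enforced bound on $\ov{q}_{2i-1}$ rather than on a merely high-probability one, since otherwise an extra confidence loss would have to be charged and the levels $\epsilon_i$ would no longer add up to the claimed total.
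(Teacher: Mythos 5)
Your proposal is correct and follows essentially the same route as the paper: the same PAC-Bayesian integration of the exponential moment against the prior $\pi_{2i}$ and posterior $\rho_{2i}$ with the density bound $\exp(\gamma_{2i})$, Chebyshev's exponential inequality, the symmetric lower deviation, a union bound, and the reduction to the event $\lvert m - \wh{\theta}_{\ov{\alpha}_{2i}}(\ov{\theta}_{2i-1})\rvert \leq \ov{\zeta}_{2i}$ via the enforced definition of $\ov{\theta}_{2i}$. Your explicit calibration check, showing $v + (m-\ov{\theta}_{2i-1})^2 \leq \exp[(1+x_{2i})\zeta_{2i-1}]\,\ov{q}_{2i}$ from the construction-enforced bound on $\ov{q}_{2i-1}$ and $U_{2i} \geq -1$, is exactly the step the paper's proof uses implicitly when it passes to $m + \ov{\zeta}_{2i}$, so it is a welcome clarification rather than a deviation.
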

\begin{proof}
We start from the exponential moment inequality
$$
\B{E} \bigl\{ \exp \bigl[ n \alpha \wh{\theta}_{\alpha}(\theta) 
\bigr] \bigr\} 
\leq \exp \bigl[ n \alpha g(\theta) \bigr],
\text{ where } g(\theta) =  m  + \frac{\alpha}{2} 
\bigl[ v + (m - \theta)^2 \bigr].
$$
Integrating with respect to $\pi_{2i}$, we get, 
choosing the parameter $\alpha$ to be $\ov{\alpha}_{2i}$, 
depending on $S_{2i}$ through $\ov{q}_{2i}$, 
\begin{multline*}
\B{E} \Bigl\{ \tint \rho_{2i}(d S_{2i}) \exp \Bigl[ 
n \ov{\alpha}_{2i} \bigl[ \wh{\theta}_{\ov{\alpha}_{2i}}(\ov{\theta}_{2i-1}) - 
g(\ov{\theta}_{2i-1}) \bigr] - \gamma_{2i} \Bigr] 
\\ \shoveleft{\leq \B{E} \Bigl\{ \tint  \rho_{2i}(d S_{2i}) 
\B{1}\Bigl(\frac{d \rho_{2i}}{d \pi_{2i}} > 0
\Bigr)} \\ \times  \exp \Bigl[ n \ov{\alpha}_{2i} \bigl[ \wh{\theta}_{
\ov{\alpha}_{2i}}(\ov{\theta}_{2i-1})
- g(\ov{\theta}_{2i-1}) \bigr] - \log \Bigl( 
\frac{ d \rho_{2i} }{d \pi_{2i}} \Bigr) \Bigr]  \Bigr\}
\\ \leq \B{E} \Bigl\{ \tint \pi_{2i}(d S_{2i}) 
\exp \Bigl[ n \ov{\alpha}_{2i} \bigl[ \wh{\theta}_{
\ov{\alpha}_{2i}}(\ov(\theta)_{2i-1}) - 
g(\ov{\theta}_{2i-1}) \bigr] \Bigr] \Bigr\} 
\\ = \tint \pi_{2i}(d S_{2i}) \B{E} \Bigl\{ \exp \Bigl[ 
n \ov{\alpha}_{2i} \bigl[ \wh{\theta}_{\ov{\alpha}_{2i}}(\ov{\theta}_{2i-1}) - 
g(\ov{\theta}_{2i-1}) \bigr] \Bigr] \Bigr\} \leq 1.
\end{multline*}
Thus, according to Chebyshev's inequality, with probability at least $1 - 
\epsilon_{2i}$, 
\begin{multline*}
\wh{\theta}_{\ov{\alpha}_{2i}}(\ov{\theta}_{2i-1}) 
\leq m + \frac{\ov{\alpha}_{2i}}{2} \bigl[ v + (m - \ov{\theta}_{2i-1})^2 
\bigr] + \frac{\gamma_{2i} + \log( \epsilon_{2i}^{-1})}{n \ov{\alpha}_{2i}}
\\ \leq m + 
\exp \biggl[ \frac{(1 + x_{2i}) \zeta_{2i-1}}{2} \biggr] \sqrt{ \frac{2 \ov{q}_{2i} \bigl[ 
\gamma_{2i} + \log(\epsilon_{2i}^{-1}) \bigr]}{n}} = m + \ov{\zeta}_{2i}.
\end{multline*}
In the same way, considering $\ov{\theta}_{2i-1} - Y_i$ instead 
of $Y_i - \ov{\theta}_{2i-1}$, we can prove with probability at least 
$1 - \epsilon_{2i}$ that 
$$
m \leq \wh{\theta}_{\ov{\alpha}_{2i}}(\ov{\theta}_{2i-1}) 
+ \ov{\zeta}_{2i}.
$$
A union bound argument then proves that with probability at least 
$1 - 2 \epsilon_{2i}$, 
$$
\bigl\lvert m - \wh{\theta}_{\ov{\alpha}_{2i}}(\ov{\theta}_{2i-1}) 
\bigr\rvert \leq \ov{\zeta}_{2i}.
$$
Coming back to the definition of $\ov{\theta}_{2i}$, 
we see that it means that with probability at least $1 - 2 \epsilon_{2i}$,
$\ov{\theta}_{2i} = \wh{\theta}_{\ov{\alpha}_{2i}}(\ov{\theta}_{2i-1})$.
\end{proof}

We can now take a union bound of Lemma \thmref{lemma9.4} and Lemma
\thmref{lemma9.5}, for $i =1, \dots, k$, to see that with probability 
at least $1 - 2 \sum_{i=1}^{2k} \epsilon_{i}$, the constructions
of $\ov{q}_{i}$ and $\ov{\theta}_i$ coincide with the definitions
of $\wt{q}_i$ and $\wt{\theta}_i$, and therefore that $\ov{\theta}_{i} 
= \wt{\theta}_i$ and $\ov{q}_i = \wt{q}_i$, $i = 1, \dots, 2k$. 
Consequently, with probability at least $1 - 2 \sum_{j=1}^{2k} 
\epsilon_{j}$, for any $i = 1, \dots, k$,
\begin{gather*}
\lvert m - \wt{\theta}_{2i} \rvert \leq \zeta_{2i},\\
\bigl\lvert \log \bigl[ v + (m - \wt{\theta}_{2i-1})^2 \bigr] 
- \log \bigl( \wt{q}_{2i-1} \bigr) \bigr\rvert \leq \zeta_{2i-1},
\end{gather*}
proving Proposition \thmref{prop5.2}.

\subsection{Proof of Proposition \thmref{prop6.1}}
Let us consider the function 
$$
g(x) \overset{\text{def}}{=} x - \frac{1}{2} \log \left(  \frac{1 + x + \frac{x^2}{2}}{
1 - x + \frac{x^2}{2}} \right), \qquad  x \in \B{R}.
$$
\mbox{} \hfill 
\includegraphics{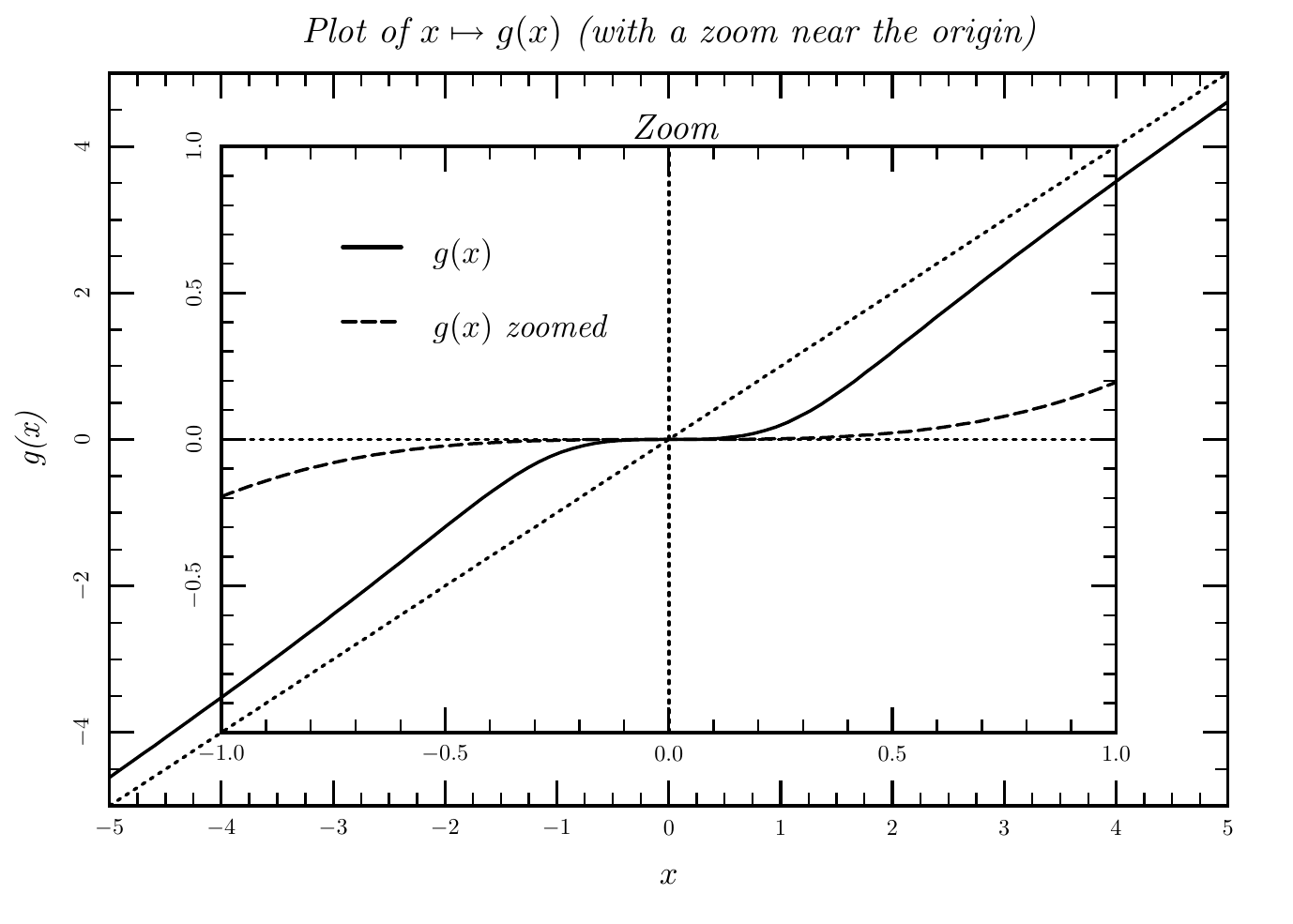} \hfill \mbox{}\\[-8ex]
\begin{lemma}
The function $g$ is bounded by
$$
\lvert g(x) \rvert  \leq \min \left\{ 
\frac{\lvert x \rvert^3}{5}, \frac{3 x^2}{10}, \lvert x \rvert \right\}, 
\qquad x \in \B{R}.
$$
\end{lemma}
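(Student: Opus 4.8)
The plan is to use the fact that $g$ is odd, reduce everything to $x \ge 0$, and then treat each of the three candidate bounds separately by studying the sign of an auxiliary difference, in the same spirit as the proof of Lemma \thmref{lemma9.1}. Replacing $x$ by $-x$ swaps the numerator and denominator inside the logarithm, so $g(-x) = -g(x)$; it therefore suffices to prove $0 \le g(x) \le \min\bigl\{ x^3/5,\, 3x^2/10,\, x\bigr\}$ for $x \ge 0$. Writing $p = 1 + x + \frac{x^2}{2}$ and $q = 1 - x + \frac{x^2}{2}$, a short computation using $pq = 1 + \frac{x^4}{4}$ and $(1+x)q - (x-1)p = 2 - x^2$ gives the clean formula
$$
g'(x) = \frac{x^2(x^2+2)}{x^4 + 4} \ge 0.
$$
Since $g(0) = 0$, this shows $g$ is nondecreasing, hence $g(x) \ge 0$ for $x \ge 0$, which settles the sign. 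The bound $g(x) \le x$ is then immediate: for $x \ge 0$ one has $p \ge q$, so the argument of the logarithm defining the threshold $T(x)$ is at least $1$, whence $T(x) \ge 0$ and $g(x) = x - T(x) \le x$.

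For the cubic bound I would set $h(x) = x^3/5 - g(x)$ and compute, using the formula for $g'$,
$$
h'(x) = \frac{x^2(3x^2-2)(x^2-1)}{5(x^4+4)},
$$
so that on $(0,\infty)$ the only critical points are $x = \sqrt{2/3}$ (a local maximum) and $x = 1$ (a local minimum). Since $h(0) = 0$, $h$ increases initially, and $h(x) \to +\infty$, the sign of $h$ is governed by the local minimum value $h(1) = \tfrac12 \log 5 - \tfrac45$. Similarly, for the quadratic bound I would set $k(x) = 3x^2/10 - g(x)$, whose derivative factors as
$$
k'(x) = \frac{x(x-1)(x-2)(3x^2+4x+6)}{5(x^4+4)},
$$
the quadratic factor having negative discriminant and hence being positive. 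The only interior local minimum is at $x = 2$, with value $k(2) = \tfrac12 \log 5 - \tfrac45$. Both $h$ and $k$ therefore stay nonnegative on $[0,\infty)$ precisely when this common quantity is nonnegative.

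The main obstacle is exactly this tight numerical inequality: both the cubic and quadratic bounds reduce to $\tfrac12\log 5 \ge \tfrac45$, i.e. $\log 5 \ge 8/5$, equivalently $5 \ge e^{8/5}$. This is what forces the constants $1/5$ and $3/10$ rather than the smaller $1/6$ suggested by the expansion $g(x) = x^3/6 + O(x^5)$, and it is genuinely tight, the margin being only about $5 \times 10^{-3}$. I would verify it via $e^{8/5} = e \cdot e^{3/5} \approx 4.95 < 5$. Combining the three global bounds on $[0,\infty)$ with the oddness of $g$ then yields $\lvert g(x)\rvert \le \min\bigl\{\lvert x\rvert^3/5,\, 3x^2/10,\, \lvert x\rvert\bigr\}$ for all $x \in \B{R}$, as claimed.
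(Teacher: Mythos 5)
Your proof is correct and follows essentially the same route as the paper's: the same formula $g'(x) = \frac{x^2(x^2+2)}{x^4+4}$, the same auxiliary differences with derivative factorizations $x^2(3x^2-2)(x^2-1)$ and $x(x-1)(x-2)(3x^2+4x+6)$, and the same reduction of both the cubic and quadratic bounds to the tight numerical fact $\tfrac{1}{2}\log 5 \geq \tfrac{4}{5}$ evaluated at the critical points $x=1$ and $x=2$. The only differences are cosmetic: your sign convention on the auxiliary functions is reversed, and your monotonicity bookkeeping and numerical check of $e^{8/5} < 5$ are spelled out slightly more explicitly than in the paper.
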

The derivative of $g$ is
$$
g'(x) = \frac{x^2}{4} \left( \frac{1}{1+x+\frac{x^2}{2}} + 
\frac{1}{1 - x + \frac{x^2}{2}} \right) = 
\frac{x^2(2 + x^2)}{4 + x^4} \geq 0, \qquad x \in \B{R},
$$
showing that $g$ has the same sign as $x$. 
The fact that $\lvert g(x) \rvert \leq \lvert x \rvert$ is then clear 
from the sign 
of $\frac{1}{2} \log \left( \frac{1 + x + \frac{x^2}{2}}{1 - x 
+ \frac{x^2}{2}} \right)$, which is the same as the sign of $x$. 

Let us prove now that $\ds 
\lvert g(x) \rvert \leq \frac{\lvert x \rvert^3}{5}$. It is clearly 
enough to prove it for $x > 0$, because $\lvert g \rvert $ is symmetric. 
Let us consider $\ds h(x) = g(x) - \frac{x^3}{5}$ and let us compute
$$
h'(x) = \frac{x^2}{4+x^4} \biggl( - \frac{2}{5} + x^2 - \frac{3}{5} x^4 
\biggr) = - \frac{x^2 (x^2 - 1)(3 x^2 - 2)}{5(4 + x^4)}.
$$
From the sign of $h'$, we see that $h$ has a unique local maximum 
on the positive real line at point $x = 1$. Moreover
$h(1) = \frac{4}{5} - \frac{1}{2} \log(5) < 0$ (it is close 
to $-0.005$, as can be checked numerically). Thus 
$h(x) \leq 0$ for $x \in \B{R}_+$, implying that $\ds \lvert g(x) \rvert 
\leq \frac{\lvert x \rvert^3}{5}$ on the whole real line, 
as announced.

To prove $\ds \lvert g(x) \rvert \leq \frac{3 x^2}{10}$, 
we consider $\ds h_2(x) = g(x) - \frac{3 x^2}{10}$.  
A small computation shows that
$$
h_2'(x) = - \frac{x(x-1)(x-2)(3x^2 + 4x + 6)}{5(4 + x^4)}.
$$
Thus it has a unique local maximum on the positive real 
line at point $2$. Moreover $\ds h_2(2) = \frac{4}{5} - 
\frac{1}{2} \log(5) < 0$, showing that $\lvert g \rvert$ is 
upper-bounded by $\ds \frac{3 x^2}{10}$ on the positive real 
line, and therefore on the whole real line because it is 
symmetric. 

Let us remark now that with probability at least $1 - \epsilon$, 
\begin{multline*}
n \alpha (M - m) - \sum_{i=1}^n 
g\bigl[ \alpha(Y_i - m) \bigr] \\ \leq \sum_{i=1}^{n} \log \Bigl[  1 + \alpha ( Y_i - m) 
+ \tfrac{\alpha^2}{2} (Y_i - m)^2 \Bigr]  
\leq \frac{n \alpha^2}{2} v + \log(\epsilon^{-1}).
\end{multline*}
The first of these two inequalities comes from the fact that
$$
\frac{1}{2} \log \left( \frac{1 + x + \frac{x^2}{2}}{1 - x + \frac{x^2}{2}} 
\right) \leq \log \bigl( 1 + x + \tfrac{x^2}{2} \bigr), \qquad x \in \B{R}.
$$
In the same way, with probability at least $1 - \epsilon$, 
\begin{multline*}
n \alpha(M-m) - \sum_{i=1}^n g \bigl[ \alpha (Y_i - m) \bigr] 
\\ \geq - \sum_{i=1}^n \log \Bigl[ 1 - \alpha(Y_i - m) 
+ \tfrac{\alpha^2}{2} (Y_i - m)^2 \Bigr] \geq - \frac{n \alpha^2}{2} 
v - \log(\epsilon^{-1}).
\end{multline*}
Let us now deal with $\sum_{i=1}^n g \bigl[ \alpha(Y_i-m) \bigr]$.
We need some compact notations to manipulate this. 
Let $G_i = g\bigl[ \alpha(Y_i - m) \bigr]$ and $G = g \bigl[ 
\alpha (W-m) \bigr] $. 
Let us remark that 
$$
\lvert G  \rvert \leq \min \biggl\{ 
\frac{\alpha^3}{5}  \lvert Y - m \rvert^3, \frac{3 \alpha^2}{10} 
(Y - m)^2, \alpha \lvert Y - m \rvert \biggr\}.
$$
Moreover, using the fact that $\min \{ a,b \} 
\leq a^{2/3} b^{1/3}$, 
we see that 
$$
\lvert G \rvert \leq \left( \frac{
3}{10} \right)^{1/3} \alpha^{4/3} \lvert Y-m \rvert^{4/3}.
$$
With probability at least $1 - \epsilon$, 
\begin{multline*}
\biggl\lvert \frac{1}{n} \sum_{i=1}^n \bigl[  G_i  
- \B{E} (G)   \bigl]   \biggr\rvert 
\leq \epsilon^{-1/4} \B{E} \Biggl\{ 
\biggl[ \frac{1}{n} \sum_{i=1}^n \bigl[ G_i 
- \B{E}(G) \bigl] \biggr]^4 \Biggr\}^{1/4}
\\ =  
\bigl( \epsilon n^3 \bigr)^{-1/4} \Biggl\{ 
3 (n-1) \bigl[ 
\B{E}(G^2) - \B{E}(G)^2 \bigr]^2
+ \B{E} \Bigl\{ \bigl[  G - \B{E}(G) \bigr]^4 \Bigr\} 
\Biggr\}^{1/4} 
\\ 
= \bigl( \epsilon n^3 \bigr)^{-1/4} 
\Biggl\{ 3(n-1) \Bigl[ \B{E}(G^2)^2 
- 2 \B{E}(G^2)\B{E}(G)^2 + \B{E}(G)^4 \Bigl] 
\\ + \B{E}(G^4) - 4 \B{E}(G^3)\B{E}(G)  
+ 6 \B{E}(G^2)\B{E}(G)^2 - 3 \B{E}(G)^4 
\Biggr\}^{1/4}
\\ = \bigl( \epsilon n^3 \bigr)^{-1/4} 
\Biggl\{ 3(n-1) \B{E}(G^2)^2 
- 6(n-2) \B{E}(G^2)\B{E}(G)^2 
\\ + 3(n-2) \B{E}(G)^4 + \B{E}(G^4) - 4 \B{E}(G^3)\B{E}(G) 
\Biggr\}^{1/4}
\\ \leq \bigl( \epsilon n^3 \bigr)^{-1/4} 
\Biggl\{ 
3(n-1) \B{E}(G^2)^2 + \B{E}(G^4) + 4 \B{E}(\lvert G \rvert^3)
\B{E}(\lvert G \rvert)
\Biggr\}^{1/4} \\
\leq \bigl( \epsilon n^3 \bigr)^{-1/4} 
\Biggl\{ 3(n-1) \Bigl[ \tfrac{9}{100} \alpha^4 \B{E}
\bigl[ (Y-m)^4 \bigr]  \Bigr]^2 
+ \alpha^4 \B{E}\bigl[ (Y-m)^4 \bigr] 
\\ + \tfrac{6}{25} \alpha^7 \B{E} \bigl[ (Y-m)^4 \bigr] \B{E} 
\bigl[ \lvert Y - m\rvert^3 \bigr] \Biggr\}^{1/4}.
\end{multline*}
Let us now use the fact that $\B{E}\bigl[ (Y-m)^4 \bigr] \leq \kappa v^2$ 
to deduce that
$$
\B{E} \bigl[ \lvert Y - m \rvert^3 \bigr] 
\leq \sqrt{\B{E} \bigl[ (Y-m)^2 \bigr] \B{E} \bigl[ 
(Y-m)^4 \bigr]} \leq \sqrt{\kappa v^3}.
$$
Let us set $\ds \alpha = \sqrt{\frac{2 \log(\epsilon^{-1})}{nv}}$,
for this value of $\alpha$
\begin{multline*}
\biggl\lvert \frac{1}{n} \sum_{i=1}^n \bigl[ G_i -  
\B{E}(G) \bigr] \biggr\rvert \leq 
\bigl( \epsilon n^3 \bigr)^{-1/4} \Biggl\{ \frac{3^5}{10^4} (n-1) 
\kappa^2 \alpha^8 v^4 + \kappa \alpha^4 v^2 + \frac{6}{25} 
\kappa^{3/2} \alpha^7 v^{7/2} \Biggl\}^{1/4}
\\ \leq \bigl(\epsilon n^3 \bigr)^{-1/4} 
\alpha \sqrt{v} \Biggl\{\kappa + \frac{ 3^5 (n-1) \log(\epsilon^{-1})^2}{2500 n^2} \kappa^2 + \frac{12 \sqrt{2} \log(\epsilon^{-1})^{3/2}}{25 
n^{3/2}} \kappa^{3/2} \Biggr\}^{1/4} 
\end{multline*}

Let us remark also that
$$
\lvert \B{E} (G) \rvert \leq \frac{\alpha^3}{5} \B{E} \bigl( 
\lvert Y - m \rvert^3 \bigr) 
\leq \frac{2 \alpha \log(\epsilon^{-1}) \sqrt{\kappa v}}{5 n}.
$$

Putting all this together, we see that with probability 
at least $1 - 3 \epsilon$, 
\begin{multline*}
\frac{\lvert M - m \rvert }{\sqrt{v}} \leq \sqrt{\frac{2 \log(\epsilon^{-1})}{n}} 
+ \frac{ 2 \log(\epsilon^{-1}) \sqrt{\kappa}}{5n}
\\ + \Biggl( \frac{\kappa}{\epsilon n^3} \Biggr)^{1/4}  
\Biggl( 1 + \frac{3^5(n-1) \log(\epsilon^{-1})^2 \kappa}{2500 n^2} 
+ \frac{12 \sqrt{2} \log(\epsilon^{-1})^{3/2} \sqrt{\kappa}}{25n^{3/2}} 
\Biggr)^{1/4}.
\end{multline*}
The result stated in Proposition \thmref{prop6.1} is then obtained by replacing
$\epsilon$ with $\frac{2}{3} \epsilon$, to get an event with confidence 
level $1 - 2 \epsilon$ as elsewhere in this paper. 

Let us remark that the following proposition, based on the Chebyshev 
inequality applied directly to the fourth moment of the empirical 
mean does not provide the right speed when $\epsilon$ is small 
and $n$ large.

\begin{prop}
For any probability distribution whose kurtosis is not greater than $\kappa$, 
the empirical mean $M$ is such that with probability at least $1 - 
2 \epsilon$, 
$$
\lvert M - m \rvert \leq \biggl( \frac{3(n-1) + \kappa}{2 n \epsilon} 
\biggr)^{1/4} \sqrt{\frac{v}{n}}
$$
\end{prop}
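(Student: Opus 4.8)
The plan is to apply Markov's inequality to the fourth moment of $M - m$, exactly as the remark preceding the statement suggests. Write $M - m = \frac{1}{n}\sum_{i=1}^n (Y_i - m)$; the summands $Y_i - m$ are i.i.d., centered, with second moment $v$ and fourth moment at most $\kappa v^2$ by the definition of the kurtosis. The whole argument reduces to evaluating $\B{E}\bigl[(M-m)^4\bigr]$ and then optimizing over the deviation level.

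First I would expand the fourth power of the sum and use independence together with the vanishing first moment to discard every monomial in which some index appears exactly once. Only two kinds of terms survive: a single index repeated four times, and two distinct indices each repeated twice. Counting the latter with the multinomial coefficient $\frac{4!}{2!\,2!} = 6$ gives
\[
\B{E}\biggl[\Bigl(\sum_{i=1}^n (Y_i - m)\Bigr)^4\biggr] = n\,\B{E}\bigl[(Y-m)^4\bigr] + 3n(n-1)\,v^2 \leq n v^2 \bigl[\kappa + 3(n-1)\bigr].
\]
Dividing by $n^4$ yields the clean bound $\B{E}\bigl[(M-m)^4\bigr] \leq v^2\bigl[3(n-1)+\kappa\bigr]/n^3$.

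Second, I would invoke Markov's inequality in the form $\B{P}\bigl(\lvert M - m \rvert \geq t\bigr) = \B{P}\bigl((M-m)^4 \geq t^4\bigr) \leq \B{E}\bigl[(M-m)^4\bigr]/t^4$. Setting the right-hand side equal to $2\epsilon$ and solving for $t$ gives $t^4 = v^2[3(n-1)+\kappa]/(2\epsilon n^3)$, that is
\[
t = \biggl(\frac{3(n-1)+\kappa}{2n\epsilon}\biggr)^{1/4}\sqrt{\frac{v}{n}},
\]
which is exactly the announced bound.

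There is no genuine obstacle here: the only mildly delicate point is the bookkeeping in the fourth-moment expansion --- making sure the cross term carries the factor $3n(n-1)$ rather than, say, $n(n-1)$ --- but this is entirely routine. The point the statement is meant to illustrate is qualitative rather than technical: because this bound decays only like $\epsilon^{-1/4}$ in the confidence parameter, it fails to reproduce the $\log(\epsilon^{-1})$ behaviour in the high-confidence regime that the truncation argument of Proposition \thmref{prop6.1} achieves.
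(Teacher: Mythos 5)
Your proof is correct and coincides with the paper's own argument: the same fourth-moment expansion $\B{E}\bigl[(M-m)^4\bigr] = \bigl[\kappa + 3(n-1)\bigr]v^2/n^3$ (using independence and centering to kill all terms with a singleton index), followed by Chebyshev--Markov applied to the fourth power and solving for the deviation level at probability $2\epsilon$. Your closing remark about the $\epsilon^{-1/4}$ decay also matches the paper's stated reason for including this proposition as a foil to Proposition \thmref{prop6.1}.
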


\begin{proof} Let us assume to simplify notations and without
loss of generality that $\B{E}(Y) = 0$. 
$$
\B{E} \bigl( M^4 \bigr) = 
\frac{1}{n^4} \sum_{i=1}^n \B{E} (Y_i^4) 
+ \frac{1}{n^4} \sum_{i < j} 6 \B{E}(Y_i^2)\B{E}(Y_j^2) 
= \frac{\B{E}(Y^4)}{n^3} + \frac{3 (n-1) \B{E}(Y^2)^2}{n^3}.
$$
It implies that 
$$
\B{P} \Bigl( \lvert M - m \rvert \geq \eta \Bigr) \leq \frac{\B{E}(M^4)}{\eta^4} 
\leq \frac{\bigl[3(n-1) + \kappa \bigr] v^2}{n^3 \eta^4},
$$
and the result is proved by considering 
$\ds 2 \epsilon = \frac{\bigl[ 3(n-1)+\kappa \bigr] v^2}{n^3 \eta^4}$.  
\end{proof}

\subsection{Proof of Proposition \thmref{prop2.1.2}}

Let us consider the distributions $\B{P}_1$ and 
$\B{P}_2$ of the sample $(Y_i)_{i=1}^n$ obtained when 
the marginal distributions are respectively the Gaussian
measure with variance $v$ and mean $m_1 = -\eta$ and the 
Gaussian measure with variance $v$ and mean $m_2 = \eta$. 
We see that, whatever the estimator $\wh{\theta}$, 
\begin{multline*}
\B{P}_1(\wh{\theta} \geq m_1 + \eta) 
+ \B{P}_2(\wh{\theta} \leq m_2 - \eta) = 
\B{P}_1(\wh{\theta} \geq 0) + \B{P}_2(\wh{\theta} \leq 0)
\\ \geq (\B{P}_1 \wedge \B{P}_2)(\wh{\theta} \geq 0) 
+ (\B{P}_1 \wedge \B{P}_2) (\wh{\theta} \leq 0) 
= \lvert \B{P}_1 \wedge \B{P}_2 \rvert,
\end{multline*}
where $\B{P}_1 \wedge \B{P}_2$ is the measure whose density
with respect to the Lebesgue measure (or equivalently 
with respect to any dominating measure, such as $\B{P}_1 + 
\B{P}_2$) is the minimum of the 
densities of $\B{P}_1$ and $\B{P}_2$ and whose total variation 
is $\lvert \B{P}_1 \wedge \B{P}_2 \rvert$. 

Now, using the fact that the empirical mean is a sufficient 
statistics of the Gaussian shift model, it is easy to 
realize that 
$$
\lvert \B{P}_1 \wedge \B{P}_2 \rvert = \B{P}_1(M \geq m_1 + \eta) 
+ \B{P}_2(M \leq m_2 - \eta),
$$
which obviously proves the proposition. 

\subsection{Proof of Proposition \thmref{prop2.2}}

Let us consider the distribution with support $ \{ 
- n \eta, 0, n \eta \}$ defined by
$$
\B{P}\bigl( \{n \eta\} \bigr) = \B{P}\bigl( \{-n\eta \}\bigr) =  
\bigl[ 1 - \B{P}(\{0\}) \bigr]/2 = \frac{v}{2n^2 \eta^2}.
$$
It satisfies $\B{E}(Y) = 0$, $\B{E}(Y^2) = v$ and 
$$
\B{P}(M \geq \eta) = \B{P}(M \leq - \eta) 
\geq \B{P}(M = \eta) = \frac{v}{2n\eta^2}
\left(1 - \frac{v}{n^2\eta^2} \right)^{n-1}.
$$

\subsection{Proof of Proposition \thmref{prop7.3}}

Let us consider for $Y$ the following distribution, 
with support $\{-n\eta, - \xi, \xi, n \eta \}$, where $\xi$ and $\eta$ are two positive real parameters, 
to be adjusted to obtain the desired variance and kurtosis.

\begin{align*}
\B{P}(Y = - n \eta) & = \B{P}(Y = n \eta) = q,\\
\B{P}(Y = - \xi) & = \B{P}(Y = \xi ) = \frac{1}{2} - q.
\end{align*}

In this case
\begin{align*}
m & = 0,\\
\B{E}(Y^2) & = v = (1 - 2q) \xi^2 + 2 q n^2 \eta^2, \\ 
\B{E}(Y^3) & = 0,\\
\B{E}(Y^4) & = (1 - 2q) \xi^4 + 2 q n^4 \eta^4.
\end{align*}
Let us choose $\xi$ such that $v = 1$. 
This is done by putting
$$
\xi^2 = \frac{1 - 2 q n^2 \eta^2}{1 - 2 q}.
$$

The kurtosis of the distribution defined by $q$ and $\eta$, 
the two remaining free parameters once $\xi$ has been 
set as explained, is equal to 
$$
\kappa = \B{E} \bigl( Y^4 \bigr) 
= \frac{ \bigl( 1 - 2q n^2 \eta^2 \bigr)^2}{1 - 2q} + 2 q n^4 
\eta^4.
$$

It is easily seen that 
$$
\B{P} \bigl( M \geq \eta \bigr) = 
\B{P} \bigl( M \leq - \eta \bigr) 
\geq n q \frac{(1 - 2q)^{n-1}}{2} = \epsilon.
$$
Indeed, 
\begin{multline*}
\B{P} \bigl( M \geq \eta \bigr) \geq \sum_{i=1}^n \B{P} 
\biggl( Y_i = n \eta;\; Y_j \in \{-\xi,+\xi\}, j \neq i;\; \sum_{j, j \neq i} 
Y_j \geq 0 \biggr)
\\ =  n \B{P} (Y_1 = n \eta) \B{P} \biggl( Y_j \in \{-\xi, +\xi\},
j = 2, \dots, n; \; \sum_{j=2}^n Y_j \geq 0 \biggr) 
\\ \geq \frac{n q}{2}  \B{P} \Bigl( Y_j \in \{-\xi, +\xi\}, 
j=2, \dots, n \Bigr)
= \frac{nq}{2} (1 - 2 q)^{n-1}.
\end{multline*}

Starting from $\epsilon \leq (4 e )^{-1}$, and 
$c \geq 1 + 1/n$, we can define a probability distribution by choosing
\begin{align*}
q & = \frac{2 \epsilon}{n} \biggl( 1 - \frac{4e\epsilon}{n} \biggr)^{-(n-1)} 
\leq \frac{2 e \epsilon}{n} \leq \frac{1}{2n}, \\
\eta & = \biggl( \frac{c-1}{2 q n^4} \biggr)^{1/4} 
= \biggl( \frac{c-1}{4 \epsilon n^3} \biggr)^{1/4} 
\biggl( 1 - \frac{4 e \epsilon}{n} \biggr)^{(n-1)/4} \geq \frac{1}{n},
\end{align*}
whose kurtosis $\kappa$ will not be greater than $c$, since
in this case
$$
\kappa = \frac{(1 - 2qn^2 \eta^2)^2}{1 - 2q} + 2 q n^4 \eta^4 
\leq 1 + 2qn^4 \eta^4 \leq c, 
$$ 
and for which 
$$
\B{P} \Bigl( \lvert M - m \rvert \geq \eta \Bigr) 
\geq nq (1 - 2 q)^{n-1} \geq nq \biggl( 1 - \frac{4 e \epsilon}{n} 
\biggr)^{n-1} = 2 \epsilon. 
$$

\section{Generalizations}

\subsection{Non identically distributed independent random 
variables} 

The assumption that the sample is identically distributed can 
be dropped. Indeed, assuming only that the random variables
$(Y_i)_{i=1}^n$ are independent, meaning that their 
joint distribution is of the product form $\bigotimes_{i=1}^n 
\B{P}_i$, we can still write, 
for $W_i = \pm \alpha (Y_i - \theta)$ or $W_i = \pm 
\bigl[ \alpha(Y_i - \theta) - \delta \bigr]$, 
\begin{multline*}
\B{E} \biggl\{ \exp \biggl[ \sum_{i=1}^n \log \biggl( 
1 + W_i + \frac{W_i^2}{2} 
\biggr) \biggr] \biggr\}\\ 
= \exp \biggl\{ \sum_{i=1}^n \log \biggl[ 1 + \B{E} \bigl( W_i) 
+ \frac{ \B{E}\bigl( W_i^2 \bigr)}{2} \biggr] \biggr\} 
\\ \leq  \exp \biggl\{ n \log \biggl[ 1 + \frac{1}{n} \sum_{i=1}^n 
\B{E} \bigl( W_i \bigr) + \frac{1}{2n} \sum_{i=1}^n 
\B{E} \bigl( W_i^2 \bigr) \biggr] \biggr\}.
\end{multline*}

Starting from these exponential inequalities, we can reach the same 
conclusions as in the i.i.d. case, as long as we set
\begin{align*}
m & = \frac{1}{n} \sum_{i=1}^n \B{E}(Y_i),\\
\text{and } \quad v & = \frac{1}{n} \sum_{i=1}^n \B{E} \bigl[ (Y_i - m)^2 \bigr].
\end{align*}
Thus here, the role that is played by the marginal sample distribution 
in the i.i.d. case is played by the mean marginal sample distribution 
$\ds \frac{1}{n} \sum_{i=1}^n \B{P}_i$.
As moreover, the empirical mean $M$ still satisfies 
$$
\B{E} \bigl[ (M - m)^2 \bigr] = \frac{v}{n} 
- \frac{1}{n^2} \sum_{i=1}^n \bigl[ \B{E}(Y_i) - m \bigr]^2 
\leq \frac{v}{n},
$$
we see that Propositions \thmref{prop3.1}, \thmref{prop3.2},
\thmref{prop2.1}, \thmref{prop3.1.2}, \thmref{prop1.4},
and \thmref{prop5.2} remain true, the proofs being unchanged, 
except for the starting inequalities mentioned above, 
and the kurtosis coefficients being those of the mean sample 
distribution $\ds \frac{1}{n} \sum_{i=1}^n \B{P}_i$.

\subsection{Simpler truncating function}

We can use the simpler truncation function
$$
L(x) = \max \Bigl\{ -1, \min \{ +1, x \} \Bigr\}.
$$
Let $\lambda$ be the positive root of 
the equation 
$$
- \frac{1}{\lambda} \log \biggl( 1 - \frac{\lambda^2}{4 
\bigl[ \exp(\lambda) - 1 - \lambda \bigr]} \biggr) = 1.
$$
Let us define the upper and lower bounds 
\begin{align*}
L_+(x) & = \frac{1}{\lambda} \log \Bigl\{ 1 + \lambda x + 
\bigl[\exp(\lambda)-1-\lambda\bigr] x^2 \Bigr\},\\
L_+'(x) & = \frac{1}{\log(2)} \log \biggl\{ 
1 + \log(2) x + \frac{\log(2)^2}{2} x^2 + \Bigl[ 
1 - \log(2) - \frac{\log(2)^2}{2} \Bigr] x_+^3 \biggr\},\\
L_-(x) & = - L_+(-x),\\
L_-'(x) & = - L_+'(-x). 
\end{align*}
Numerically, $0.535 \leq \lambda \leq 0.536$. 
Moreover $\ds \exp(\lambda) - 1 - \lambda = \frac{ a \lambda^2}{2}$, 
with $\ds a = \frac{2 \bigl[ \exp(\lambda) - 1 - \lambda \bigr]}{\lambda^2} 
\simeq 1.2$.  
\begin{lemma}
They are such that 
\begin{align}
\label{eq10.1}
L_-(x) \leq L(x) & \leq L_+(x), \qquad x \in \B{R},\\
\label{eq10.2}
L_-'(x) \leq L(x) & \leq L_+'(x), \qquad x \in \B{R}.
\end{align}
\end{lemma}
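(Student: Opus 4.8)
The plan is to reduce everything to two upper bounds by exploiting the oddness of $L$. Since $L(-x) = -L(x)$ and $L_-(x) = -L_+(-x)$, the inequality $L_-(x) \le L(x)$ is, after the substitution $x \mapsto -x$, literally the same as $L(x) \le L_+(x)$; the primed pair is related in the same way. Hence it suffices to prove the two upper bounds $L(x) \le L_+(x)$ and $L(x) \le L_+'(x)$ for every $x \in \B{R}$, the lower bounds then following for free. In each case I will split $\B{R}$ according to the three regimes $x \le -1$, $-1 \le x \le 1$, $x \ge 1$ on which $L$ equals $-1$, $x$, and $1$ respectively.

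For $L(x) \le L_+(x)$, write $b = \exp(\lambda) - 1 - \lambda = \tfrac{a \lambda^2}{2}$. The vertex of the quadratic $1 + \lambda x + b x^2$ sits at the negative abscissa $-\lambda/(2b)$ with minimal value $1 - \lambda^2/(4b)$, and the defining equation for $\lambda$ states precisely that this value equals $\exp(-\lambda)$. In particular the quadratic is everywhere positive, so $L_+$ is defined on all of $\B{R}$, and $\min_x L_+(x) = \tfrac1\lambda \log \exp(-\lambda) = -1$; this settles $x \le -1$, where $L(x) = -1 \le L_+(x)$. For $x \ge 1$ the quadratic equals $1 + \lambda + b = \exp(\lambda)$ at $x = 1$ by the definition of $b$ and is increasing to the right of its vertex, giving $L_+(x) \ge 1 = L(x)$. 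On $-1 \le x \le 1$, where $L(x) = x$, the substitution $u = \lambda x \in [-\lambda,\lambda]$ reduces the claim to the scalar inequality $\exp(u) \le 1 + u + \tfrac a2 u^2$.

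For $L(x) \le L_+'(x)$ I run the same split with $\ell = \log 2$ and cubic coefficient $d = 1 - \ell - \tfrac{\ell^2}{2} > 0$, writing $x_+ = \max\{x,0\}$. For $x \le 0$ the cubic term drops out and the quadratic $1 + \ell x + \tfrac{\ell^2}{2} x^2$ has minimal value $\tfrac12 = \exp(-\ell)$, which is exactly why $\ell = \log 2$ is forced; thus $L_+'(x) \ge -1$, covering $x \le -1$, while on $-1 \le x \le 0$ the claim becomes $\exp(u) \le 1 + u + \tfrac{u^2}{2}$ for $u = \ell x \le 0$, an elementary convexity estimate. On $0 \le x \le 1$ it becomes $\exp(u) \le 1 + u + \tfrac{u^2}{2} + \tfrac{d}{\ell^3} u^3$ for $u \in [0,\ell]$, equivalently $\tfrac{\exp(u) - 1 - u - u^2/2}{u^3} \le \tfrac{d}{\ell^3}$; the left side equals $\sum_{k \ge 3} u^{k-3}/k!$, which is increasing, so its maximum over $[0,\ell]$ is attained at $u = \ell$, where it equals $\tfrac{d}{\ell^3}$ by the choice of $d$. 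Finally for $x \ge 1$ the argument of the logarithm equals $2 = \exp(\ell)$ at $x = 1$ and is increasing, so $L_+'(x) \ge 1$.

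The main obstacle is the scalar inequality $\exp(u) \le 1 + u + \tfrac a2 u^2$ on $[-\lambda,\lambda]$, because $a$ (equivalently $\lambda$) is defined only implicitly and the estimate is tight at both endpoints of the argument, $u = 0$ and $u = \lambda$. I would handle it by studying $p(u) = 1 + u + \tfrac a2 u^2 - \exp(u)$: one has $p(0) = p'(0) = 0$ and $p''(0) = a - 1 > 0$, while $p''(u) = a - \exp(u)$ changes sign exactly once, at $u = \log a$. From this sign pattern $p$ is decreasing on $[-\lambda,0]$ and unimodal on $[0,\lambda]$ with $p(\lambda) = 0$, whence $p \ge 0$ throughout. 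The analogous inequalities on the $\log 2$ side are easier, since there the bounds follow directly from the power series of $\exp$ for $x \ge 0$ and from the same convexity estimate for $x \le 0$.
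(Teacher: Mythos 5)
Your proof is correct, and its core strategy coincides with the paper's: exponentiate to remove the logarithm, compare $\exp(\lambda L)$ with the quadratic, use the defining equation of $\lambda$ to control the negative tail, and get the lower bounds from the oddness relations $L(-x)=-L(x)$, $L_\pm$ versus $L_\mp$. The difference is organizational and, in one place, substantive. The paper studies the single global function $f(x)=\exp[\lambda L_+(x)]-\exp[\lambda L(x)]$ and runs a piecewise sign analysis of $f'$ and $f''$, with the tail handled by noting the quadratic's minimum is nonnegative by the definition of $\lambda$; you instead split according to the three regimes of $L$ and exploit the exact identities (the quadratic's minimum equals $\exp(-\lambda)$, its value at $x=1$ equals $\exp(\lambda)$), which reduces the whole unprimed bound to the scalar inequality $\exp(u)\le 1+u+\tfrac a2 u^2$ on $[-\lambda,\lambda]$ — your analysis of that inequality (second derivative changes sign once, both endpoints are zeros) is essentially the paper's analysis of $f$ on $(-1,1)$ in disguise. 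Where you genuinely depart from the paper is the primed bound: the paper dismisses it with ``done similarly by analyzing the shape of $x\mapsto \exp[\log(2)L_+'(x)]-\exp[\log(2)L(x)]$,'' whereas you give a complete and cleaner argument, observing that $\bigl(\exp(u)-1-u-\tfrac{u^2}{2}\bigr)/u^3=\sum_{k\ge 3}u^{k-3}/k!$ is increasing on $[0,\log 2]$ and attains exactly $d/\ell^3$ at the right endpoint by the choice of the cubic coefficient $d$. That power-series argument is a real improvement: it replaces a case-by-case shape analysis with a one-line monotonicity statement, and it makes transparent why the constants $\log 2$ and $1-\log 2-\tfrac12\log^2 2$ are forced (equality at $u=0$ and $u=\ell$). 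The only steps you leave implicit are harmless numerical checks the paper also glosses over, namely $\log a<\lambda$ (needed for the sign change of $p''$ to occur inside $[0,\lambda]$) and $d>0$.
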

\begin{proof}
Let us consider the function $f(x) = \exp \bigl[ \lambda L_+(x) \bigr] 
- \exp \bigl[ \lambda L(x) \bigr]$. It is such that
\begin{align*}
f(x) & = 1 + \lambda x + \frac{a \lambda^2 x^2}{2} 
- \exp \bigl[ \lambda L(x) \bigr],\\
f'(x) & = \begin{cases} 
\lambda + a \lambda^2 x - \lambda \exp \bigl(\lambda x), & x 
\in )-1, +1(,\\
\lambda + a \lambda^2 x, & x \not\in (-1, +1),
\end{cases} \\
f''(x) & = 
\begin{cases}
\lambda^2\bigl[ 1 - \exp(\lambda x) \bigr], & x \in )-1, +1(,\\
\lambda^2 a, & x \not\in (-1,+1).
\end{cases}
\end{align*}
Since $f'(0) = 0$ and $\ds f''(x) \geq 0, -1 < x \leq \frac{\log(a)}{\lambda}$, 
$f''(x) \leq 0$, $\ds \frac{\log(a)}{\lambda} \leq x < 1$,  
and $f(1) = 0$, we see that $f(x) \geq x$, $x \in (-1,+1)$. 
Moreover, $f$ is quadratic on $)-\infty, -1)$ and reach its minimum
at point $x = - \frac{1}{a \lambda}$, thus it is non negative on the whole
line when this minimum value is non negative, that is when 
$\ds 1 - \frac{1}{2a} - \exp(- \lambda) \geq 0$, which is satisfied according 
to the definition of $\lambda$.
This proves that $L(x) \leq L_+(x)$, $x \in \B{R}$. The fact that 
$L_-(x) \leq L(x)$ is then a consequence of $L(x) = -L(-x)$. 
The proof of $L(x) \leq L_+'(x)$ is done similarly by analyzing the 
shape of the function $x \mapsto \exp \bigl[ \log(2) L_+'(x) 
\bigr] - \exp \bigl[ \log(2) L(x) \bigr]$.
\end{proof}\\[-3ex]
\mbox{} \hfill \includegraphics{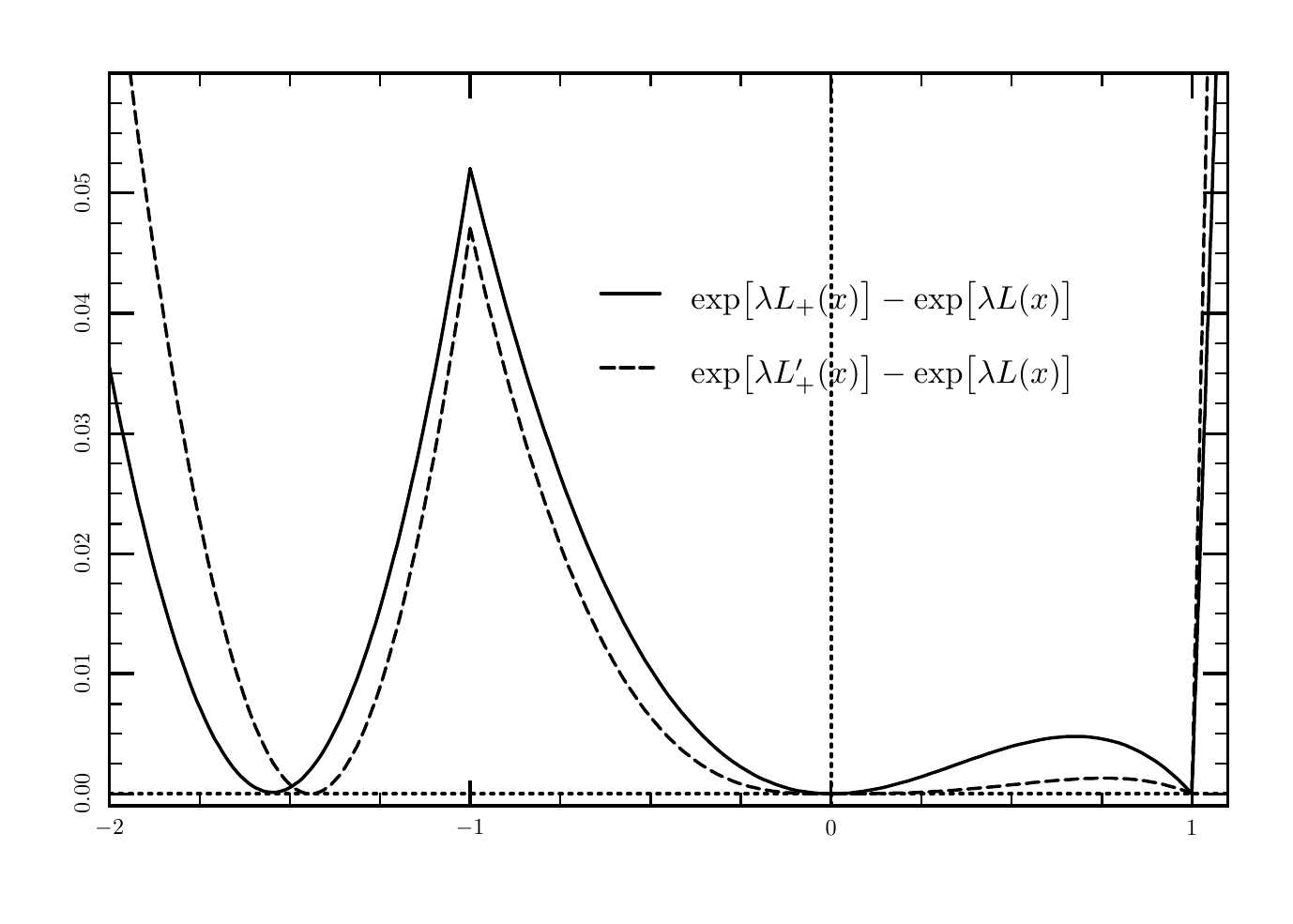} \hfill \mbox{}\\
\mbox{} \hfill 
\raisebox{-8ex}[0.7\height][0ex]{\includegraphics{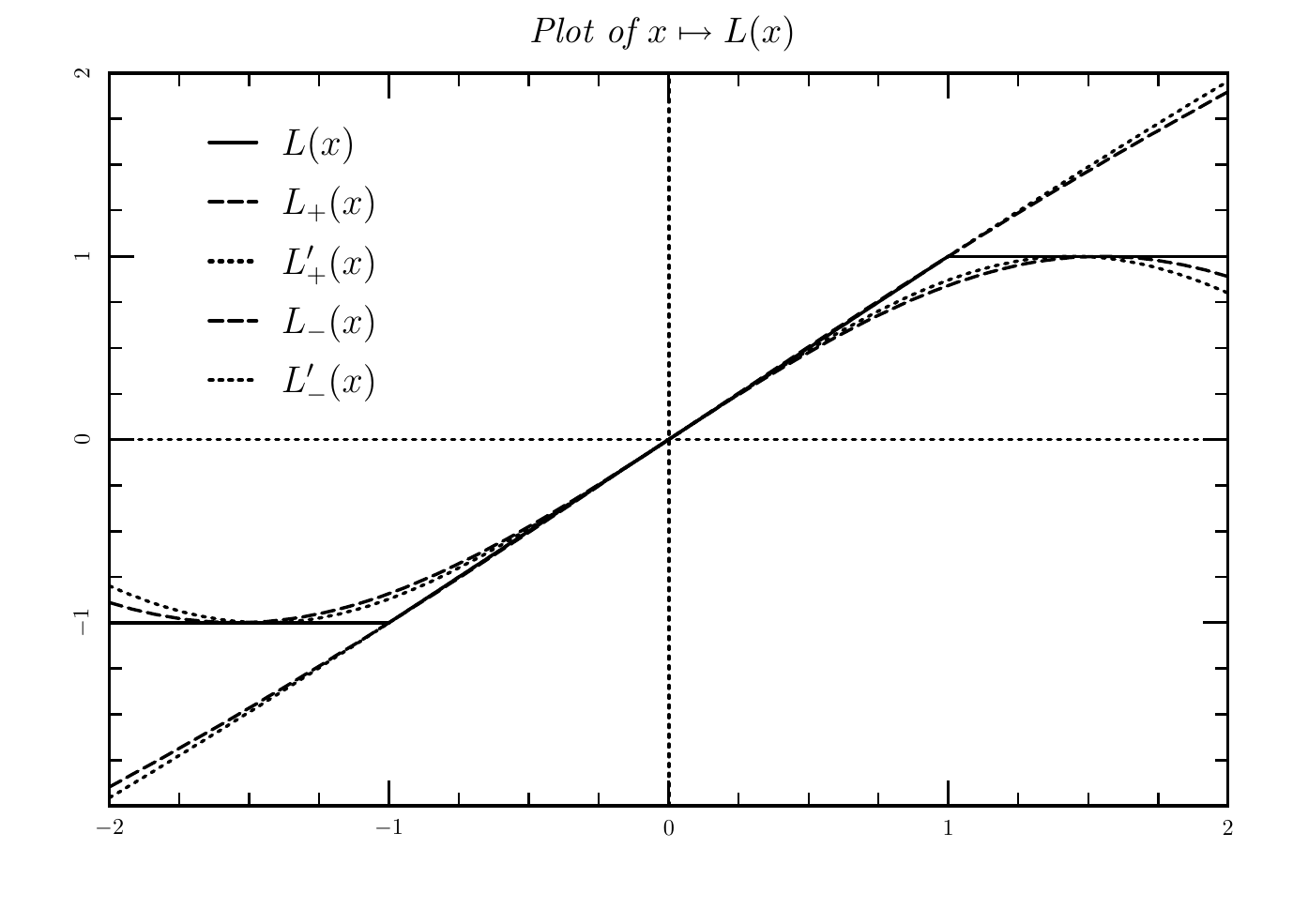}}
\hfill \mbox{}\\[-2ex]
If we redefine now our truncated mean estimate as 
$$
\wh{\theta}_{\alpha}(\theta_0) = \theta_0 + \frac{\lambda}{n \alpha} 
\sum_{i=1}^n L\biggl[ \frac{\alpha}{\lambda} (Y_i - \theta_0) \biggr],
$$
and define $\ds a = \frac{2 \bigl[ \exp(\lambda) - 1 - \lambda 
\bigr]}{\lambda^2} \simeq 1.2$, we deduce that
\begin{multline*}
\B{E} \Bigl\{ \exp \Bigl[ n \alpha  \bigl[ 
\wh{\theta}_{\alpha}(\theta_0) - \theta_0 \bigr] \Bigr]
\Bigr\} \leq \exp \Bigl\{ n \log \Bigl[ 
1 + \alpha (m - \theta_0) \\ + 
\frac{a \alpha^2}{2} \bigl[ v + (m - \theta_0)^2 \bigr] \Bigr] \Bigr\}.
\end{multline*}
Therefore, with probability at least $1 - \epsilon$,
\begin{multline*}
\wh{\theta}_{\alpha}(\theta_0) \leq  \theta_0 + 
\frac{1}{\alpha}
\log \Bigl\{ 1 + \alpha(m - \theta_0) 
+  \frac{a \alpha^2 }{2}
\bigl[ v + (m- \theta_0)^2 \bigr] \Bigr\} + \frac{\log(\epsilon^{-1})}{n 
\alpha} 
\\ \leq 
m + \frac{a \alpha  
\bigl[ v + (m - \theta_0)^2 \bigr]}{2 } + 
\frac{\log(\epsilon^{-1})}{n \alpha}.
\end{multline*}
Working out the reverse inequality in the same way gives the following 
variant of Proposition \thmref{prop3.2}.
\begin{prop}
Assume that $v \leq v_0$ and $\lvert m - \theta_0 \rvert \leq \delta_0$, 
where $v_0$ and $\delta_0$ are known prior bounds. 
With probability at least $1 - 2 \epsilon$, 
$$
\lvert \wh{\theta}_{\alpha}(\theta_0) - m \rvert 
\leq \frac{ a \alpha (v_0 + \delta_0^2)}{2} + \frac{\log(\epsilon^{-1})}{n 
\alpha}.
$$
When $\ds \alpha = \sqrt{ \frac{2 \log(\epsilon^{-1})}{
a (v_0 + \delta_0^2) n}}$, 
we get with probability at least $1 - 2 \epsilon$, 
$$
\bigl\lvert \wh{\theta}_{\alpha}(\theta_0) - m \bigr\rvert
\leq \sqrt{\frac{ 2 a  (v_0 + \delta_0^2) 
\log(\epsilon^{-1})}{n}} 
\leq 1.1 \sqrt{\frac{2 (v_0 + \delta_0^2) \log(\epsilon^{-1})}{n}}.
$$
\end{prop}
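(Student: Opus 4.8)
The plan is to finish the two-sided deviation argument that is already begun just before the statement: the upper deviation $\wh{\theta}_{\alpha}(\theta_0)-m$ has been controlled, so it remains to treat the lower deviation, take a union bound, insert the prior bounds, and optimise $\alpha$. The whole computation is powered by the pointwise estimates $L_-(x)\leq L(x)\leq L_+(x)$ of the preceding lemma together with the algebraic identity $\exp\bigl[\lambda L_+(x)\bigr]=1+\lambda x+\tfrac{a\lambda^2}{2}x^2$, which holds because $\tfrac{a\lambda^2}{2}=\exp(\lambda)-1-\lambda$ by the definition of $a$. Substituting $x=\tfrac{\alpha}{\lambda}(Y_i-\theta_0)$ turns the exponential moment of a single summand into a genuine quadratic in $Y_i-\theta_0$, which is exactly what produces the clean $\log$-moment bound already quoted for the upper deviation.

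First I would carry out the mirror-image exponential moment. Since $L$ is odd, $-L(x)=L(-x)\leq L_+(-x)$, so
$$\exp\Bigl[-\lambda L\bigl(\tfrac{\alpha}{\lambda}(Y_i-\theta_0)\bigr)\Bigr]\leq 1-\alpha(Y_i-\theta_0)+\tfrac{a\alpha^2}{2}(Y_i-\theta_0)^2.$$
Multiplying over the independent sample and using $\B{E}\bigl[(Y_i-\theta_0)^2\bigr]=v+(m-\theta_0)^2$ gives
$$\B{E}\Bigl\{\exp\bigl[-n\alpha(\wh{\theta}_{\alpha}(\theta_0)-\theta_0)\bigr]\Bigr\}\leq\exp\Bigl\{n\log\bigl[1-\alpha(m-\theta_0)+\tfrac{a\alpha^2}{2}(v+(m-\theta_0)^2)\bigr]\Bigr\}.$$
The exponential Chebyshev inequality, followed by $\log(1+u)\leq u$ and cancellation of the $\pm\alpha(m-\theta_0)$ terms, then yields with probability at least $1-\epsilon$
$$m-\wh{\theta}_{\alpha}(\theta_0)\leq\frac{a\alpha}{2}\bigl(v+(m-\theta_0)^2\bigr)+\frac{\log(\epsilon^{-1})}{n\alpha},$$
exactly mirroring the upper bound already established.

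A union bound over the two one-sided events gives, with probability at least $1-2\epsilon$, the symmetric estimate $\lvert\wh{\theta}_{\alpha}(\theta_0)-m\rvert\leq\tfrac{a\alpha}{2}\bigl(v+(m-\theta_0)^2\bigr)+\tfrac{\log(\epsilon^{-1})}{n\alpha}$; inserting $v\leq v_0$ and $(m-\theta_0)^2\leq\delta_0^2$ proves the first displayed inequality. For the second, substituting $\alpha=\sqrt{2\log(\epsilon^{-1})/[a(v_0+\delta_0^2)n]}$ balances the two terms, each becoming $\tfrac12\sqrt{2a(v_0+\delta_0^2)\log(\epsilon^{-1})/n}$, so their sum is $\sqrt{2a(v_0+\delta_0^2)\log(\epsilon^{-1})/n}$; factoring out $\sqrt{a}$ and using $a\simeq 1.2$, hence $\sqrt{a}\leq 1.1$ since $1.1^2=1.21\geq a$, gives the closing bound. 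I do not expect a genuine obstacle here: the only two places where care is needed are the calibration of $\lambda$ that guarantees $L\leq L_+$ (already settled by the lemma) and the elementary numerical check $\sqrt{a}\leq 1.1$; everything else is the same deviation-and-optimisation routine as in Proposition \thmref{prop3.2}.
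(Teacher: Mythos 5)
Your proposal is correct and follows essentially the same route as the paper: the paper establishes the upper deviation from the exponential moment bound based on $L \leq L_+$, remarks that the reverse inequality is "worked out in the same way," and then plugs in the prior bounds and the balancing choice of $\alpha$; you have simply filled in that mirror-image step (via the oddness of $L$, i.e.\ $-L(x) = L(-x) \leq L_+(-x)$), the union bound, and the numerical check $\sqrt{a} \leq 1.1$, all exactly as intended. No gaps.
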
 
So there is a ten per cent loss of accuracy with respect to Proposition 
\thmref{prop3.2}: this is the price to pay for using 
a simpler truncation function. We let the reader derive 
by himself the equivalent of 
the iterated estimate of Proposition \thmref{prop2.1}.
When there is a third moment, we can also work with 
Equation \myeq{eq10.2}, instead of Equation 
\myeq{eq10.1}.

\section{Some concluding remarks}

We would like to end this paper by sharing some guess about what is
going on behind the scene.
The need for thresholding indicates that large values 
may not be reliable,
and have, so to speak, a bad ``signal to noise ratio''.
This is somehow understandable, since values whose deviation 
from the mean is much larger than the standard deviation  
have to appear in the sample with a small and therefore
hard to estimate probability
whereas their large size gives them a strong impact on the empirical mean, 
which makes their contributions to this estimate even worse.

\bibliographystyle{plain}
\bibliography{ref}

\end{document}